\documentclass[11pt]{article}

\usepackage{mathrsfs}
\usepackage{kbordermatrix}
\usepackage{enumerate}
\usepackage[section]{algorithm}
\usepackage{algorithmic}
\usepackage{multirow}
\usepackage{xcolor}

\usepackage{graphicx}
\usepackage{amsmath,amsfonts,amsthm,amsbsy,amssymb}
\usepackage{fixmath}
\usepackage{amssymb,latexsym}

\usepackage{hyperref}

\usepackage{cleveref}

\def\by{\pmb{y}}

\def\bbC{\mathbb{C}}
\def\bbI{\mathbb{I}}

\def\bbR{\mathbb{R}}

\def\scrB{\mathscr{B}}

\def\scrG{\mathscr{G}}
\def\scrH{\mathscr{H}}

\def\scrR{\mathscr{R}}

\def\cR{{\cal R}}

\def\td{{\sc td}}

\def\ptbd{{\sc ptbd}}
\def\ptsvd{{\sc ptsvd}}

\newcommand\STM[2]{{\rm St}(#1,#2)}

\def\wtd{\widetilde}
\def\what{\widehat}

\usepackage{accents}

\DeclareMathOperator{\BDiag}{BDiag}

\DeclareMathOperator{\diag}{diag}
\DeclareMathOperator{\dist}{dist}

\DeclareMathOperator*{\opt}{opt}
\DeclareMathOperator{\rank}{rank}
\DeclareMathOperator{\sym}{sym}
\DeclareMathOperator{\tr}{tr}
\DeclareMathOperator{\ufd}{ufd}

\DeclareMathOperator{\F}{F}
\DeclareMathOperator{\HH}{H}
\DeclareMathOperator{\T}{T}

\DeclareMathOperator{\KKT}{KKT}

\def\scrR{\mathscr{R}}

\def\Red{\textcolor{red}}

\newtheorem{theorem}{Theorem}[section]
\newtheorem{lemma}{Lemma}[section]

\theoremstyle{definition}

\newtheorem{remark}{Remark}[section]

\allowdisplaybreaks

\numberwithin{equation}{section}
\numberwithin{figure}{section}
\numberwithin{table}{section}

\def\sss{\scriptstyle}

\title{An NPDo Approach for Tensor Block-Diagonalization}

\author{Ren-Cang Li%
\thanks{Department of Mathematics, University of Texas at Arlington, Arlington, TX 76019-0408, USA.
        Supported in part by NSF DMS-2407692.
        Email: {\tt rcli@uta.edu}.}
\and
Li Wang%
\thanks{Department of Mathematics, University of Texas at Arlington, Arlington, TX 76019-0408, USA.
        Supported in part by NSF DMS-2407692.
        Email: {\tt li.wang@uta.edu}.}
\and
Mei Yang%
\thanks{Department of Mathematics, University of Texas at Arlington, Arlington, TX 76019-0408, USA.
        Email: {\tt mei.yang@uta.edu}.}
}

\date{
      May 9, 2026
}

\begin{document}

\maketitle

\begin{abstract}
This paper is concerned with {\em Partial Tensor Block-Diagonalization\/} of a multiway tensor
by orthonormal matrices so that the extracted block-diagonal part optimally represents  the tensor.
The basic idea is to maximize the block-diagonal part via the tensor's mode-multiplications
by orthonormal matrices.
For that reason, it will be referred to {\em Principal Tensor Block-Diagonalization} (\ptbd), which contains
the Tucker decomposition (\td) of a tensor as a special case with just one block. Also as a special case is
the approximate dominant  tensor SVD in which
each block-size is 1-by-1.
An NPDo approach is proposed to optimize the block-diagonal part for computing \ptbd.
It is shown the NPDo approach combined with Gauss-Seidel-type updating is globally convergent to a stationary point
while the objective increases monotonically.
Numerical experiments are presented to illustrate the efficiency of the NPDo approach.

\bigskip
\noindent
{\bf Keywords:}
principal tensor Block-diagonalization,
PTBD,
principal tensor SVD,
PTSVD,
NPDo

\smallskip
\noindent
{\bf Mathematics Subject Classification}  15A18, 15A69; 65F30; 65K05; 90C26
\end{abstract}

\clearpage
\tableofcontents

\clearpage
\section{Introduction}\label{sec:intro}
A matrix is sometimes referred as a 2-mode or 2-way array and in general a tensor is an array of more than 2 modes or 2 ways \cite{bako:2025,dldv:2000a,elde:2007,koba:2009}. In various applications, data is naturally collected as tensors that need to be
analyzed to gain knowledge. Most tensor decompositions of 3 modes or more are extensions from corresponding ones in
matrix analysis and can only be done approximately. In this paper, we will  present an NPDo approach for computing dominant tensor block-diagonalization.

Let $B\equiv[b_{i_1i_2\cdots i_m}]\in\bbC^{n_1\times n_2\times\cdots\times n_m}$ be an $m$-mode tensor. We will adopt a few operations on tensors: the Frobenius norm, unfolding, and mode-multiplication, as introduced in \cite{dldv:2000a,elde:2007}.  The tensor Frobenius norm of $B$ is
$$
\|B\|_{\F}=\Big(\sum_{i_1,i_2,\ldots,i_m}|b_{i_1i_2\cdots i_m}|^2\Big)^{1/2}.
$$
Let $N=n_1n_2\cdots n_m$.
There are $m$ different unfolding matrices from the $m$-mode tensor $B$. Following the informal definition
\cite[Definition~I.8]{bako:2025} and denoting by $B_{\ufd,\ell}\in\bbC^{n_{\ell}\times N/n_{\ell}}$ the $\ell$th unfolding matrix,
we define the $(i_1\cdots i_{\ell-1}\,i_{\ell}\cdots i_m)$th column of $B_{\ufd,\ell}$ as
\begin{equation}\label{eq:ufd,j}
\begin{bmatrix}
  b_{i_1\cdots i_{\ell-1}\,1\,i_{\ell}\cdots i_m} \\
  b_{i_1\cdots i_{\ell-1}\,2\,i_{\ell}\cdots i_m} \\
  \vdots \\
  b_{i_1\cdots i_{\ell-1}\,n_{\ell}\,i_{\ell}\cdots i_m},
\end{bmatrix}
\end{equation}
i.e., the $\ell$th subscript runs from $1$ to $n_{\ell}$ while all others stay fixed. As each subscript $i_j$ $(j\ne\ell$)
varies from $1$ to $n_j$, $B_{\ufd,\ell}$ has a total of $N/n_{\ell}$ columns. There is a question how these columns
should be ordered to form $B_{\ufd,\ell}$. For the interest of this paper,
the order of arranging these columns does not matter so long it is done in the same way every time
the $\ell$th unfolding matrix is formed.
The $\ell$th mode-multiplication by $B$ is defined as
\begin{equation}\label{eq:j-times}
(B\times_{\ell} X)_{(i_1i_2\cdots i_m)}=\sum_{j=1}^{n_{\ell}}x_{i_{\ell}\,j}b_{i_1\cdots i_{\ell-1}\,j\,i_{\ell+1}\cdots i_m}
\quad\mbox{for $X\equiv[x_{ij}]\in\bbC^{k\times n_{\ell}}$},
\end{equation}
to yield $B\times_{\ell} X\in\bbC^{n_1\times\cdots\times n_{\ell-1}\times k\times n_{\ell+1}\times\cdots\times n_m}$,
where $(\,\cdot\,)_{(i_1i_2\cdots i_m)}$ refers to the $(i_1,i_2,\ldots, i_m)$th entry of a tensor.
Denote by
$$
\STM{k}{n}=\{P\in\bbC^{n\times k}\,:\,P^{\HH}P=I_k\}\subset\bbC^{n\times k},
$$
the complex Stiefel manifold, where $1\le k\le n$.

By {\em dominant tensor block-diagonalization}, we mean to seek $m$ orthonormal matrices
$P_{\ell}\in\STM{k_{\ell}}{n_{\ell}}$ for $1\le \ell\le m$
so that
\begin{equation}\label{eq:C}
T:=B\times_1 P_1^{\HH}\times_2 P_2^{\HH}\cdots\times_m P_m^{\HH}\in\bbC^{k_1\times k_2\times\cdots\times k_m}
\end{equation}
nontrivially concentrates on its block-diagonal components and it optimally assumes  part of the total mass of the tensor.
Necessarily, $1\le k_{\ell}\le n_{\ell}$ for $1\le\ell\le m$.
We now make it concrete what we meant by that $T$ concentrates on its block-diagonal components
and at the same time and assumes part of the total mass of the tensor optimally.
Let
\begin{equation}\label{eq:tau(ki)-n}
\tau_{k_{\ell}}=(k_{\ell 1},\dots,k_{\ell t}),\,\,
\mbox{integer $k_{\ell j}\ge 1$}, \,\,
k_{\ell}:=\sum_{j=1}^t k_{\ell j}\le n_{\ell},\quad
\mbox{for $1\le\ell\le m$}
\end{equation}
which we will call a {\em partition\/} of $k_{\ell}$, where integer $t\ge 1$.
We partition $T$ according to $\tau_{k_{\ell}}$ along each of the $m$ ways and view
$T$ as a $t\times \cdots\times t$ $m$-mode block tensor.
The {\em $(\tau_{k_1},\ldots,\tau_{k_m})$-block diagonal part\/} of $T$ is defined as and denoted by
\begin{equation}\label{eq:BDiag-dfn}
\BDiag_{(\tau_{k_1},\ldots,\tau_{k_m})}(T)=\diag(T_{111}, T_{222},\ldots, T_{ttt}),
\end{equation}
where
$$
T_{111}=T_{(1:k_{11},1:k_{21},\ldots,1:k_{m1})}, \quad
T_{222}=T_{(k_{11}+1:k_{11}+k_{12},k_{21}+1:k_{21}+k_{22},\ldots,k_{m1}+1:k_{m1}+k_{m2})},
$$
and similarly for other $T_{iii}$
of size $k_{1i}\times\cdots\times t_{mi}$.
The tensor $T$ is referred to as {\em a $(\tau_{k_1},\ldots,\tau_{k_m})$-block diagonal tensor\/} if all the blocks in $T$, except diagonal blocks $T_{iii}$,
are exactly zeros.
With notation $\BDiag_{(\tau_{k_1},\ldots,\tau_{k_m})}$ just introduced,
the dominant tensor block-diagonalization is to simply maximize
$\|T\|_{\F}$ subject to $P_{\ell}\in\STM{k_{\ell}}{n_{\ell}}\,\,\forall\ell$.

Our goal in this paper is to seek a principal tensor block diagonalization (\ptbd) in the sense of finding
an optimal tuple $(P_1,\ldots,P_m)$ with $P_{\ell}\in\STM{k_{\ell}}{n_{\ell}}\,\,\forall\ell$
such that
$T$ given by \eqref{eq:C}
is optimally close to being $(\tau_{k_1},\ldots,\tau_{k_m})$-block diagonal
and at the same time, the $(\tau_{k_1},\ldots,\tau_{k_m})$-block diagonal part
$\BDiag_{(\tau_{k_1},\ldots,\tau_{k_m})}(T)$ are maximized so that
\begin{equation}\label{eq:BTD}
B\approx \BDiag_{(\tau_{k_1},\ldots,\tau_{k_m})}(T)
        \times_1 P_1\times_2 P_2\cdots\times_m P_m
\end{equation}
optimally. There are three special cases: 1) For the case $\tau_{k_{\ell}}=(1,1,\ldots,1)$ for $1\le\ell\le m$,
\eqref{eq:BTD} can be regard as a type of tensor SVD \cite{chsa:2009}, a natural extension of matrix SVD
\cite{demm:1997,govl:2013}, although exact tensor SVD does not exist generically; 2) For the case $k_{\ell}=1$ for $1\le\ell\le m$,
it is the so-called rank-one approximation \cite{zhgo:2001}; 3) For the case $t=1$ and thus $\tau_{k_{\ell}}=(k_{\ell})$ for $1\le\ell\le m$,
it is the Tucker decomposition (TD) \cite{tuck:1966}.

With $T$ partitioned according to $\tau_{k_{\ell}}$ as mentioned moments ago, approximation \eqref{eq:BTD} can be written
concisely as
\begin{equation}\label{eq:BTD'}
B\approx \sum_{i=1}^t T_{iii}
        \times_1 P_{1i}\times_2 P_{2i}\cdots\times_m P_{mi},
\end{equation}
where $P_{\ell i}$ for $1\le i\le t$ are obtained from partitioning $P_{\ell}$ according to $\tau_{k_{\ell}}$ for $1\le\ell\le m$:
$$
P_{\ell}=[P_{\ell 1},\ldots,P_{\ell t}]
\quad\mbox{with}\quad
P_{\ell i}\in\bbC^{n_{\ell}\times k_{\ell i}}.
$$
When all $T_{iii}$ is 1-by-1-by-1, it looks like a canonical polyadic decomposition \cite{bako:2025,koba:2009} but not quite the same because $P_{\ell}=[P_{\ell 1},\ldots,P_{\ell t}]\,\,\forall\ell$  are orthonormal.
In general, without orthonormality assumption
on $P_{\ell}$,  \eqref{eq:BTD'} had appeared in the literature \cite{dela:2008a,dela:2008b,dlni:2008},
\cite[section 5.7]{koba:2009} (and more references therein).
However, in this paper, we are looking at \eqref{eq:BTD'} through the perspective of partially and optimally block-diagonalizing a tensor along the line of matrix SVD, and for that reason we will impose the orthonormality assumption
on each $P_{\ell}$.

The optimality in approximation \eqref{eq:BTD} is determined by
\begin{subequations}\label{eq:opt-PBTD}
\begin{equation}\label{eq:opt-PBTD'}
\max_{P_{\ell}\in\STM{k_{\ell}}{n_{\ell}}\,\,\forall\ell} \, f(P_1,\ldots,P_m),
\end{equation}
where
\begin{equation}\label{eq:opt-PBTD-obj}
f(P_1,\ldots,P_m)=\left\|\BDiag_{(\tau_{k_1},\ldots,\tau_{k_m})}
     \big(B\times_1 P_1^{\HH}\times_2 P_2^{\HH}\cdots\times_m P_m^{\HH}\big)\right\|_{\F}^2.
\end{equation}
\end{subequations}
It includes the TSVD problem \cite{chsa:2009} as a special case.
It can be seen that \eqref{eq:opt-PBTD} is equivalent to
$$
\min_{P_{\ell}\in\STM{k_{\ell}}{n_{\ell}}\,\,\forall\ell} \,
   \|B-\BDiag_{(\tau_{k_1},\ldots,\tau_{k_m})}(T)\times_1 P_1\times_2 P_2\cdots\times_m P_m\|_{\F}^2.
$$
As a result, it also encompasses previous optimization models for TD \cite[section~6.4]{bako:2025} and
the rank-one approximation \cite{zhgo:2001}. If there is any $k_{\ell i}>1$ in \eqref{eq:tau(ki)-n}, optimizer
tuple $(P_1,\ldots,P_m)$ is non-unique in nature.


{\bf Contributions.}
We will show that the KKT condition for  \eqref{eq:opt-PBTD},  at optimality, leads to a system of
$m$ coupled polar decompositions of the matrix-valued functions that depend on their orthonormal polar factors.
We will develop an alternating NPDo approach for its numerical solution, where the term
NPDo stands for nonlinear polar decomposition with
orthonormal\footnote {The word ``orthogonal'' was used in \cite{li:2024} due to that the presentation there
  was in terms of the real number field. It is switched  to ``orthonormal'' in \cite{lilw:2026} which is generic
  regarding whether the real or complex complex fields.} polar factor dependency.
It is shown the NPDo approach combined with Gauss-Seidel-type updating is globally convergent to a stationary point
while the objective $f$ increases monotonically.
The NPDo approach when specialized to TD (i.e., $t=1$) becomes a HOOI variant \cite{dldv:2000b}.
Hence our global convergence result applies to HOOI straightforwardly.
Much of the developments bears similarity to our recent work on principal joint SVD-type block diagonalization \cite{liwy:2026jsvd:arXiv}.

The rest of this paper is organized as follows. In \cref{sec:NPDo}, we review the NPDo approach in \cite{wazl:2022a}
for maximizing  sum of coupled traces. It will serve as the workhorse for the alternating NPDo approach for
\ptbd\ in \cref{sec:TBD}. In \cref{sec:cvg-NPDo4PTBD} we establish our convergence results for
the alternating NPDo approach. Numerical experiments are presented in \cref{sec:egs} to demonstrate the effectiveness of
our approach. Finally, conclusions are drawn in \cref{sec:concl}.

{\bf Notation}.
We follow the following notation convention throughout this paper.
  $\bbR^{m\times n}$  is the set of $m\times n$ real matrices,  $\bbR^n=\bbR^{n\times 1}$, and $\bbR=\bbR^1$,
        and similarly $\bbC^{m\times n}$,  $\bbC^n$, and $\bbC$ except for the complex numbers.
$I_n\in\bbR^{n\times n}$ is the identity matrix or simply $I$ if its size is clear from the context.
For a matrix/vector $X$,
  $X^{\T}$ and $X^{\HH}$ stand for its transpose and complex conjugate transpose, respectively.
For $X\in\bbC^{m\times n}$, $\|X\|_2$, $\|X\|_{\F}$, and $\|X\|_{\tr}$
are its spectral norm, Frobenius norm, and trace norm (also known as the nuclear norm), respectively, and
$\sigma_{\min}(X)$ is its smallest singular
value\footnote {It is understood that $X\in\bbC^{m\times n}$ has $\min\{m,n\}$
     singular values.}.
$\cR(X)$ is the column space of a matrix $X$, spanned by its columns. For $X\in\bbC^{n\times n}$, $\tr(X)$ is its trace.
  A matrix $A\succ 0\, (\succeq 0)$ means that it is Hermitian and positive definite (semi-definite), and
        accordingly
        $A\prec 0\, (\preceq 0)$ if $-A\succ 0\, (\succeq 0)$.

\section{
         Sum of Coupled Traces}\label{sec:NPDo}
Our working engine for efficiently solving \eqref{eq:opt-PBTD} is the NPDo approach for maximizing the sum of coupled traces
that was originally investigated in \cite{bomt:1998,wazl:2022a} and has since been much extended (see \cite[Example 5.3]{li:2024}).
We consider
\begin{subequations}\label{eq:OptOnSTM-master0}
\begin{equation}\label{eq:OptOnSTM-master0a}
\max_{X\in\STM{k}{n}} \Big\{ f(X):=\sum_{i=1}^M\tr\big(X_i^{\HH}A_iX_i\big)\Big\},
\end{equation}
where
\begin{equation}\label{eq:OptOnSTM-master0b}
      \framebox{
      \parbox{12.0cm}{
      $A_i\succeq 0$ for $1\le i\le M$, $X_i\in\bbC^{n\times k_i}$ for $1\le i\le M$ are
submatrices consisting of a few or all columns of $X$ (in particular, sharing
common columns of $X$ by different $X_i$ is allowed).
      }}
\end{equation}
\end{subequations}
But we point out that the individual assumptions in \cite{bomt:1998,wazl:2022a}, respectively, are stronger:
we are not assumed here:
$M=k$ and $X_i$ is the $i$th column of $X$ in \cite{bomt:1998}, while in \cite{wazl:2022a}
$X=[X_1,X_2,\ldots,X_M]$, i.e., submatrices $X_i$ are from partitioning $X$ column-wise. From
the perspective of the general NPDo theory \cite{li:2024}, $f(X)$ is a special convex composition
of atomic functions $\tr\big(X_i^{\HH}A_iX_i\big)$, and also a special case of
the problem \cite[(2.1)]{lilw:2026}.

It can be seen that each $X_i$ can be expressed as
\begin{equation}\label{eq:Xi=XJi}
X_i=XJ_i\quad\mbox{for $1\le i\le M$},
\end{equation}
where
each $J_i\in\bbR^{k\times k_i}$ is a submatrix of $I_k$, consisting of those columns of $I_k$
with the same column indices as $X_i$ to $X$.
The objective $f$ is a real-valued function on the complex matrix variable $X\in\bbC^{n\times k}$.
Its Euclidean gradient is defined to be the unique matrix $\nabla f(P) \in \bbC^{n\times k}$
such that for $E\in \bbC^{n\times k}$ with sufficiently small $\|E\|_2$ \cite[section~2]{lilw:2026}
\begin{equation}\label{eq:f(P+E)}
f(P+E)=f(P)+\langle \nabla f(P),E  \rangle
          +o(\|E\|_2),
\end{equation}
where $\langle \cdot,\cdot  \rangle$ denotes the standard inner product on $\bbC^{n\times k}$:
$\langle X,Y  \rangle = \Re\big(\tr(Y^{\HH}X)\big)$ and
$\Re(\cdot)$ takes the real part of a complex number.
 We have
\begin{equation}\label{eq:scrH-NPDo}
\scrH(X):= 
    \nabla f(X)
    =\sum_{i=1}^M\big[\nabla \tr\big(X_i^{\HH}A_iX_i\big)\big]\,J_i^{\T}
    =\sum_{i=1}^M2A_iX_iJ_i^{\T}\in\bbC^{n\times k}.
\end{equation}
With \eqref{eq:scrH-NPDo}, the KKT condition for \eqref{eq:OptOnSTM-master0} can be stated as \cite[section~2]{li:2024}
\begin{equation}\label{eq:KKT-master0}
\scrH(X)=X\Lambda
\quad \mbox{with} \quad
\Lambda^{\HH}=\Lambda\in\bbC^{k\times k},\quad X\in\STM{k}{n}.
\end{equation}
A critical property for the objective function $f(X)$ of \eqref{eq:OptOnSTM-master0} is
\begin{equation}\label{eq:NPDo-satisfied}
\framebox{
\parbox{13cm}{\em
{\rm \cite{wazl:2022a}}
Given $X,\,\what X\in\bbC^{n\times k}$,
if
$\Re(\tr(\what X^{\HH}\scrH(X)))\ge\tr(X^{\HH}\scrH(X))+\eta$
for some $\eta\in\bbR$,
then $f(\what X)\ge f(X)+\eta$, where $\Re(\cdot)$ takes the real part of a complex number.
}}
\end{equation}
In the NPDo theory \cite{li:2024}, the statement in \eqref{eq:NPDo-satisfied} says that  the objective function $f(X)$
satisfies {\bf the NPDo Ansatz} postulated there \cite[p.188]{li:2024}. Two important implications of
\eqref{eq:NPDo-satisfied} are: 1) at any maximizer $X$, \eqref{eq:KKT-master0} is satisfied
and at the same time $\Lambda\succeq 0$, i.e., it is the polar decomposition of $\scrH(\cdot)$
evaluated at $X$, and 2) the following SCF (self-consistent-field) iteration
\begin{equation}\label{eq:SCF-form:NPDo:intro}
      \framebox{
      \parbox{12.0cm}{
      SCF for NPDo \eqref{eq:KKT-master0}: given $X^{(0)}\in\STM{k}{n}$, iteratively
      compute polar decomposition
      $\scrH(X^{(j-1)})=X^{(j)}\Lambda_j$ of $\scrH(X^{(j-1)})$ for $X^{(j)}\in\STM{k}{n}$.
      }}
\end{equation}
is convergent \cite[Theorems~3.2 and 3.3]{li:2024}.
The first implication leads to the name {\em nonlinear polar decomposition with orthonormal polar factor dependency\/} (NPDo).
In general $k\ll n$ when $n$ is large, and the polar decomposition in SCF \eqref{eq:SCF-form:NPDo:intro}
should be computed by the thin SVD: $\scrH(X^{(j-1)})=Q_j\Sigma_jS_j^{\HH}$ where $Q_j\in\STM{k}{n}$ and
$S_j\in\STM{k}{n}$, and then $X^{(j)}=Q_jS_j^{\HH}$.

\begin{remark}\label{rk:polar-nonuniqueness}
It is important to note that
the orthonormal polar factor $X^{(j)}$ in \eqref{eq:SCF-form:NPDo:intro} is unique if and only if
$\rank(\scrH(X^{(j-1)}))=k$ \cite{li:1993b,luli:2024};
otherwise there is some arbitrariness in $X^{(j)}$. However, the uncertainty
caused by $\rank(\scrH(X^{(j-1)}))<k$ does not prevent the objective from moving up. In fact, we have
\cite[Lemma 4.2]{li:2026}
$$
f(X^{(j)})\ge f(X^{(j-1)})+\underbrace{\big[\big\|\scrH(X^{(j-1)})\big\|_{\tr}-\Re\tr\big(\big[X^{(j-1)}\big]^{\HH}\scrH(X^{(j-1)})\big)\big]}_{=:\eta}
$$
where $\eta\ge 0$ and $\eta=0$ if and only if $\scrH(X^{(j-1)})=X^{(j-1)}\big(\big[X^{(j-1)}\big]^{\HH}\scrH(X^{(j-1)})\big)$
is already a polar decomposition.
\end{remark}


\section{Principal Tensor Block Diagonalization}\label{sec:TBD}
Consider the $m$-mode tensor $B\equiv[b_{i_1i_2\cdots i_m}]\in\bbC^{n_1\times n_2\times\cdots\times n_m}$.
As we explained in \cref{sec:intro}, we seek $P_{\ell}\in\STM{k_{\ell}}{n_{\ell}}\,\,\forall\ell$
so that
$T=B\times_1 P_1^{\HH}\times_2 P_2^{\HH}\cdots\times_m P_m^{\HH}$ is  approximately a $(\tau_{k_1},\ldots,\tau_{k_m})$-block diagonal tensor and, at the same time, assumes part of the total mass of the tensor optimally. To that end, we propose to solve the optimization problem \eqref{eq:opt-PBTD}.
We start by partitioning $P_{\ell}$ columnwise, according to \eqref{eq:tau(ki)-n}, as
\begin{equation}\label{eq:UVW-part'n}
P_{\ell}=\kbordermatrix{ &\sss k_{\ell 1} &\sss k_{\ell 2} &\sss \cdots &\sss k_{\ell t} \\
                  & P_{\ell 1} & P_{\ell 2} & \cdots & P_{\ell t}}\quad\forall\ell,
\end{equation}
and then expand the objective $f(P_1,\ldots,P_m)$ of \eqref{eq:opt-PBTD} to
\begin{equation}\label{eq:f(UVW)-sum}
f(P_1,\ldots,P_m)=\sum_{s=1}^t\big\|B\times_1 P_{1s}^{\HH}\times_2 P_{2s}^{\HH}\cdots\times_m P_{ms}^{\HH}\big\|_{\F}^2.
\end{equation}
Once an optimal tuple $(P_1,P_2,\ldots,P_m)$ of \eqref{eq:opt-PBTD}
is computed, the corresponding smaller tensor $T$ can be recovered by
$T=B\times_1 P_1^{\HH}\times_2 P_2^{\HH}\cdots\times_m P_m^{\HH}$ to yield
a principal block diagonalization \eqref{eq:BTD} of tensor $B$ upon extracting its block-diagonal part
$\BDiag_{(\tau_{k_1},\ldots,\tau_{k_m})}(T)$.

It can be seen that, for any $Q_{\ell i}\in\STM{k_{\ell i}}{k_{\ell i}}$ for $1\le\ell\le m$ and $1\le i\le t$,
\begin{multline*}
\big\|\BDiag_{(\tau_{k_1},\ldots,\tau_{k_m})}\big(B\times_1 (P_1Q_1)^{\HH}\times_2 (P_2Q_2)^{\HH}
              \cdots\times_m (P_mQ_m)^{\HH}\big)\big\|_{\F}^2 \\
\equiv\big\|\BDiag_{(\tau_{k_1},\ldots,\tau_{k_m})}\big(B\times_1 P_1^{\HH}\times_2 P_2^{\HH}
              \cdots\times_m P_m^{\HH}\big)\big\|_{\F}^2,
\end{multline*}
where $Q_{\ell}=\diag(Q_{\ell 1},Q_{\ell 2},\ldots,Q_{\ell t})\in\STM{k_{\ell}}{k_{\ell}}$.
Hence the maximizer tuple $(P_1,\ldots,P_m)$ of \eqref{eq:opt-PBTD} is inherently non-unique if any one of $k_{\ell i}>1$.
For that reason, it is not so appropriate to seek approximate maximizer tuples from an iterative method
to converge entrywise but rather to increasingly satisfy its KKT condition as a whole as the iteration processes.
This observation is important when it comes to perform convergence analysis of our method later
in \cref{sec:cvg-NPDo4PTBD}.

\subsection{The  NPDo Approach -- Alternating SCF}\label{ssec:NPDo-alt:PBTD}
In the rest of this paper, for integer tuple $(\ell,i_1,i_2,\ldots,i_{m-1})$ appears in the same paragraph, it is understood that it is a permutation of $(1,2,\ldots,m)$ such that $i_1<i_2<\cdots<i_{m-1}$, i.e.,
$$
(\ell,i_1,i_2,\ldots,i_{m-1})=(\ell,1,\ldots,\ell-1,\ell+1,\ldots,m).
$$
There are only $m$ such tuples, and in the case of $m=3$, there are
$(1,2,3)$, $(2,1,3)$, and $(3,1,2)$.

Define, for any $(\ell,i_1,i_2,\ldots,i_{m-1})$ as specified moments ago and $1\le s\le t$,
\begin{equation}\label{eq:Cs}
C_{\ell s}(P_{i_1s},\ldots,P_{i_{m-1}s})
    :=\big[B\times_{i_1} P_{i_1s}^{\HH}\cdots\times_{i_{m-1}} P_{i_{m-1}s}^{\HH}\big]_{\ufd,\ell}
    \in\bbC^{n_{\ell}\times (k_{i_1s}\cdots k_{i_{m-1\,s}})}.
\end{equation}
Alternatively, the objective $f(P_1,\ldots,P_m)$ of \eqref{eq:opt-PBTD} can be expressed as
\begin{align}
f(P_1,\ldots,P_m)
    &=\sum_{s=1}^t\big\|P_{\ell s}^{\HH}
           C_{\ell s}(P_{i_1s},\ldots,P_{i_{m-1}s})\big\|_{\F}^2\nonumber \\
    &=\sum_{=1}^t\tr\big(P_{\ell s}^{\HH}H_{\ell s}(P_{i_1s},\ldots,P_{i_{m-1}s})P_{\ell s}\big),
          \label{eq:opt-PBTD:obj-tr}
\end{align}
where $H_{\ell s}(P_{i_1s},\ldots,P_{i_{m-1}s})\succeq 0$ given by
\begin{equation}\label{eq:Hi}
H_{\ell s}(P_{i_1s},\ldots,P_{i_{m-1}s})
  :=[C_{\ell s}(P_{i_1s},\ldots,P_{i_{m-1}s})][C_{\ell s}(P_{i_1s},\ldots,P_{i_{m-1}s})]^{\HH}\in\bbC^{n_{\ell}\times n_{\ell}}.
\end{equation}
In \eqref{eq:opt-PBTD:obj-tr}, there are $m$ different reformulations of the objective $f(P_1,\ldots,P_m)$
in terms of matrix traces, dependent on the unfolding $[\cdot]_{\ufd,\ell}$, and all $m$ take the form as
the sum of coupled traces investigated in \cite{wazl:2022a}.
Let $\scrH_{\ell}(P_1,\ldots,P_m)\in\bbC^{n_{\ell}\times k_{\ell}}$ for $1\le\ell\le m$ be
the partial Euclidean gradient of $f(P_1,\ldots,P_m)$
with respective to $P_{\ell}$:
\begin{equation}\label{eq:opt-PBTD-partdiff}
\scrH_{\ell}(P_1,\ldots,P_m)
    =\kbordermatrix{ &\sss k_{\ell 1} &\sss \cdots &\sss k_{\ell t} \\
                  & H_{\ell 1}(P_{i_11},\ldots,P_{i_{m-1}1}))P_{\ell 1}
           & \cdots & H_{\ell t}(P_{i_1t},\ldots,P_{i_{m-1}1}))P_{\ell t}}.
\end{equation}
The KKT condition of \eqref{eq:opt-PBTD} consists of
\begin{equation}\label{eq:PBTD:KKT}
\scrH_{\ell}(P_1,\ldots,P_m) = P_{\ell}\Lambda_{\ell}, \quad
P_{\ell}\in\STM{k_{\ell}}{n_{\ell}}, \quad
\Lambda_{\ell}=\Lambda_{\ell}^{\HH}\in\bbC^{k_{\ell}\times k_{\ell}}
\quad\forall\ell.
\end{equation}
As a result of \eqref{eq:NPDo-satisfied} \cite{wazl:2022a}, we have

\begin{theorem}\label{thm:f(UVW)-Ansatz}
Denote by $\scrH_{\ell}$ the partial Euclidean gradient of $f(P_1,\ldots,P_m)$ with respect to $P_{\ell}$, as in \eqref{eq:opt-PBTD-partdiff},
and let $(P_1,\ldots,P_m)\in\bbC^{n_1\times k_1}\times\cdots\times\bbC^{n_m\times k_m}$.
For any $\what P_{\ell}\in\bbC^{n_{\ell}\times k_{\ell}}$, if
        \begin{equation}\label{eq:f(P1toPm)-Ansatz-1}
        \tr(\what P_{\ell}^{\HH}\scrH_{\ell}(P_1,\ldots,P_m))
           \ge\tr(P_{\ell}^{\HH}\scrH_{\ell}(P_1,\ldots,P_m))+\eta_{\ell}\quad\mbox{for some $\eta_{\ell}\in\bbR$},
        \end{equation}
then $f(P_1,\ldots,P_{\ell-1},\what P_{\ell},P_{\ell+1},\ldots,P_m)
                   \ge f(P_1,\ldots,P_{\ell-1},P_{\ell},P_{\ell+1},\ldots,P_m)+\eta_{\ell}$.
\end{theorem}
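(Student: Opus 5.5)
The plan is to reduce the statement to the single-variable property \eqref{eq:NPDo-satisfied} by freezing all the $P_j$ with $j\ne\ell$. Fix $\ell$ and let $(\ell,i_1,\ldots,i_{m-1})$ be the associated permutation. By \eqref{eq:opt-PBTD:obj-tr}, as a function of $P_\ell$ alone,
$$
g(X):=f(P_1,\ldots,P_{\ell-1},X,P_{\ell+1},\ldots,P_m)=\sum_{s=1}^t\tr\big(X_s^{\HH}A_sX_s\big),
$$
where $X_s=XJ_s$ is the $s$th column block of $X$ under $\tau_{k_\ell}$. Here $A_s:=H_{\ell s}(P_{i_1s},\ldots,P_{i_{m-1}s})\succeq 0$ does not involve $P_\ell$ at all, since by \eqref{eq:Cs} the matrix $C_{\ell s}$ involves only the $P_{i_js}$ with $i_j\ne\ell$.

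This $g$ is exactly of the form \eqref{eq:OptOnSTM-master0}, with $M=t$, the $A_s$ positive semidefinite, and $X_s$ the column blocks of $X$. Its Euclidean gradient is $\sum_s 2A_sX_sJ_s^{\T}$ by \eqref{eq:scrH-NPDo}, which is (up to the normalizing factor $2$) the block matrix $\scrH_\ell(P_1,\ldots,P_m)$ in \eqref{eq:opt-PBTD-partdiff}. So $\scrH_\ell$ is the gradient of $g$ at $X=P_\ell$. Invoking \eqref{eq:NPDo-satisfied} with $X=P_\ell$ and $\what X=\what P_\ell$ then gives $g(\what P_\ell)\ge g(P_\ell)+\eta_\ell$, which is the claim.

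Since \eqref{eq:NPDo-satisfied} is cited rather than proved, I would also include the one-line verification, which needs no orthonormality of $P_\ell$ or $\what P_\ell$. Put $D=\what X-X$. Expanding each term and using $A_s\succeq0$,
$$
g(\what X)=g(X)+2\Re\tr\big(D^{\HH}\textstyle\sum_sA_sX_sJ_s^{\T}\big)+\sum_s\tr\big(D_s^{\HH}A_sD_s\big)
\ge g(X)+\Re\tr\big(D^{\HH}\nabla g(X)\big).
$$
The hypothesis \eqref{eq:f(P1toPm)-Ansatz-1}, read with real parts since the left side may be complex, says precisely that $\Re\tr(D^{\HH}\nabla g(X))\ge\eta_\ell$. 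This gives the result.

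The main obstacle is bookkeeping rather than analysis. First, I must confirm that $H_{\ell s}$ is genuinely independent of $P_\ell$, which follows from \eqref{eq:Cs}. Second, the factor $2$ must be handled consistently. If $\scrH_\ell$ is taken literally without the $2$, the same computation only yields $g(\what X)\ge g(X)+2\eta_\ell$. That implies the stated bound when $\eta_\ell\ge0$, which is the case used by the SCF iteration, but not in general. I would therefore state explicitly that $\scrH_\ell$ denotes the true partial Euclidean gradient, as the theorem's wording indicates, and that $\tr$ in \eqref{eq:f(P1toPm)-Ansatz-1} means its real part.
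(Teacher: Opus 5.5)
Your proposal is correct and follows the paper's route: the paper obtains this theorem directly from the NPDo property \eqref{eq:NPDo-satisfied}, using the observation after \eqref{eq:opt-PBTD:obj-tr} that, with the other $P_j$ frozen, $f$ is a sum of coupled traces in $P_\ell$ with $H_{\ell s}\succeq 0$ independent of $P_\ell$. Your added convexity verification is also sound, and so are both caveats. The hypothesis must be read with a real part. The factor $2$ is missing from \eqref{eq:opt-PBTD-partdiff}, and without it the claim can fail when $\eta_\ell<0$, though not when $\eta_\ell\ge 0$, which is the only case used later.
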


\begin{algorithm}[t]
\caption{NPDoPTBD: the alternating NPDo approach for solving \eqref{eq:opt-PBTD}.}
\label{alg:NPDo:PBTD}
\begin{algorithmic}[1]
\REQUIRE $m$-mode tensor $B\in\bbC^{n_1\times n_2\times\cdots\times n_m}$
         (and, accordingly, matrix-valued functions
         $\scrH_{\ell}(\cdots)$ in \eqref{eq:opt-PBTD-partdiff}),
         and initial $(P_1^{(0)},\ldots,P_m^{(0)})$ with $P_{\ell}^{(0)}\in\STM{k_{\ell}}{n_{\ell}}\,\,\forall\ell$;
\ENSURE  an approximate maximizer tuple of \eqref{eq:opt-PBTD}.
\FOR{$j=0,1,\ldots$ until convergence}
    \FOR{$\ell=1,2,\ldots, m$}
       \STATE  compute $P_{\ell}^{(j+1)}$ as the orthonormal polar factor of
               $\scrH_{\ell}(P_1^{(j+1)},\ldots,P_{\ell-1}^{(j+1)},P_{\ell}^{(j)},\ldots,P_m^{(j)})$;
    \ENDFOR
\ENDFOR
\RETURN the last $(P_1^{(j)},\ldots,P_m^{(j)})$.
\end{algorithmic}
\end{algorithm}

\Cref{thm:f(UVW)-Ansatz} is broader than what we need because the theorem does not require that all $P_j$ and $\what P_{\ell}$ to
have orthonormal columns.
Along the line of the proof of \cite[Theorem~3.1]{li:2024}, we can also get

\begin{theorem}\label{thm:maximizer-BTSVD}
Let $(P_{*1},\ldots,P_{*m})$ with $P_{*\ell}\in\STM{k_{\ell}}{n_{\ell}}\,\,\forall\ell$ be a global maximizer tuple of \eqref{eq:opt-PBTD}. Then the KKT condition
\eqref{eq:PBTD:KKT} holds with
$P_{\ell}=P_{*\ell}\,\,\forall\ell$, i.e.,
\begin{equation}\label{eq:PBTD:KKTatMAX}
\scrH_{\ell}(P_{*1},\ldots,P_{*m})=P_{*\ell}\Lambda_{\ell*}, \quad
   \Lambda_{\ell}=P_{*\ell}^{\HH}\scrH_{\ell}(P_{*1},\ldots,P_{*m})\succeq 0\quad\forall\ell.
\end{equation}
\end{theorem}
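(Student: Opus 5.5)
The plan is to fix all factors except one, $P_{*\ell}$, and reduce to the single-block NPDo problem of \cref{sec:NPDo}. Fix $\ell$ and freeze $P_{*i}$ for $i\ne\ell$. By \eqref{eq:opt-PBTD:obj-tr}, the map
$$
X\mapsto g(X):=f(P_{*1},\ldots,P_{*,\ell-1},X,P_{*,\ell+1},\ldots,P_{*m})=\sum_{s=1}^t\tr\big(X_s^{\HH}A_sX_s\big),
$$
with $A_s:=H_{\ell s}(P_{*i_1s},\ldots,P_{*i_{m-1}s})\succeq 0$ and $X_s=XJ_s$ the $s$th column block of $X$, is exactly of the form \eqref{eq:OptOnSTM-master0}. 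Its Euclidean gradient is $\scrH_\ell(P_{*1},\ldots,X,\ldots,P_{*m})$, up to the harmless factor $2$ convention in \eqref{eq:scrH-NPDo} versus \eqref{eq:opt-PBTD-partdiff}. Since the tuple is a global maximizer of \eqref{eq:opt-PBTD}, $P_{*\ell}$ is a global maximizer of $g$ over $\STM{k_\ell}{n_\ell}$. It therefore suffices to prove, for the single-matrix problem, that a global maximizer $X_*$ satisfies $\scrH(X_*)=X_*\Lambda$ with $\Lambda=X_*^{\HH}\scrH(X_*)\succeq0$.

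For the single-matrix claim I would follow \cite[Theorem~3.1]{li:2024}. Let $\scrH(X_*)=Q\Lambda_Q$ be a polar decomposition, with $Q\in\STM{k_\ell}{n_\ell}$ and $\Lambda_Q\succeq0$. Then
$$
\Re\tr(Q^{\HH}\scrH(X_*))=\|\scrH(X_*)\|_{\tr}\ge \Re\tr(X_*^{\HH}\scrH(X_*)),
$$
by the variational characterization of the trace norm, $\max_{Z\in\STM{k}{n}}\Re\tr(Z^{\HH}H)=\|H\|_{\tr}$. Set $\eta:=\|\scrH(X_*)\|_{\tr}-\Re\tr(X_*^{\HH}\scrH(X_*))\ge0$. \Cref{thm:f(UVW)-Ansatz}, or directly \eqref{eq:NPDo-satisfied} (reading the traces as real parts), gives $g(Q)\ge g(X_*)+\eta$. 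Maximality of $X_*$ then forces $\eta=0$, so $\Re\tr(X_*^{\HH}\scrH(X_*))=\|\scrH(X_*)\|_{\tr}$.

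The remaining step is to turn this equality into the polar-decomposition statement, and I expect it to be the main technical point. Write the thin SVD $\scrH(X_*)=U\Sigma V^{\HH}$. Then
$$
\Re\tr(X_*^{\HH}U\Sigma V^{\HH})=\sum_j\sigma_j\,\Re\big(v_j^{\HH}X_*^{\HH}u_j\big),
$$
and each term satisfies $\Re(v_j^{\HH}X_*^{\HH}u_j)\le1$. Equality forces $X_*v_j=u_j$ for every $j$ with $\sigma_j>0$ (equality in Cauchy--Schwarz for unit vectors). Hence $X_*^{\HH}\scrH(X_*)=\sum_{\sigma_j>0}\sigma_j v_jv_j^{\HH}\succeq0$ and $X_*X_*^{\HH}U_+=U_+$ on the range of $\scrH(X_*)$, which gives $\scrH(X_*)=X_*\big(X_*^{\HH}\scrH(X_*)\big)$. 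This is the same equality characterization quoted in \cref{rk:polar-nonuniqueness} from \cite[Lemma 4.2]{li:2026}, so one may simply cite it.

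Applying this for each $\ell=1,\ldots,m$, with the other factors frozen at the maximizer, yields \eqref{eq:PBTD:KKTatMAX} with $\Lambda_{\ell*}=P_{*\ell}^{\HH}\scrH_\ell(P_{*1},\ldots,P_{*m})\succeq0$. This is in particular Hermitian, so the KKT condition \eqref{eq:PBTD:KKT} holds.
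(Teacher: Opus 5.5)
Your proof is correct and follows the route the paper intends, which defers to the proof of \cite[Theorem~3.1]{li:2024}. You freeze the other factors, use the polar factor of $\scrH_\ell$ as a competitor in the NPDo Ansatz of \Cref{thm:f(UVW)-Ansatz} to force $\|\scrH_\ell\|_{\tr}=\Re\tr(P_{*\ell}^{\HH}\scrH_\ell)$, and convert that equality into a polar decomposition via the characterization from \cite[Lemma~4.2]{li:2026}, which the paper also uses in proving \Cref{thm:cvg4SCF4NPDo-GS:PBTD}(b). Because your SVD/Cauchy--Schwarz treatment of the equality case yields the KKT equation itself, you need no separate first-order Lagrange-multiplier argument.
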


We outline the alternating NPDo approach for solving \eqref{eq:opt-PBTD} in \Cref{alg:NPDo:PBTD},
which is a  Gauss-Seidel-type updating scheme, as it
mimics the one commonly used in linear system solving \cite{demm:1997}.
A few of comments about implementing \Cref{alg:NPDo:PBTD} are in order.
\begin{enumerate}[(i)]
  \item Carefully examining the formula of $\scrH_{\ell}$ in \eqref{eq:opt-PBTD-partdiff} via
        \eqref{eq:Cs} and \eqref{eq:Hi}, we find that there is no need to explicitly form each
        $H_{\ell s}(P_{i_1s},\ldots,P_{i_{m-1}s})$ and then compute $H_{\ell s}(P_{i_1s},\ldots,P_{i_{m-1}s})P_{\ell s}$, but
        instead compute the latter according to
        $$
        [C_{\ell s}(P_{i_1s},\ldots,P_{i_{m-1}s})]\,\big([C_{\ell s}(P_{i_1s},\ldots,P_{i_{m-1}s})]^{\HH} P_{\ell s}\big).
        $$
  \item As is in \Cref{alg:NPDo:PBTD}, each loop on $j$ is just for one $j$. Consecutive $C_{\ell}^{(j)}$ with respective to $\ell$ and then
        to $j$ share common computations that should be taken advantage of for the sake of saving work. For example, for $m=3$,
        it is beneficial to do two consecutive $j$ at a time, as the following table suggests:
        $$
        \setlength{\tabcolsep}{4pt}
        \begin{tabular}{|llcl|}
          \hline
        $C_{1s}(P_{2s}^{(j)},P_{3s}^{(j)})$, & $C_{2s}(P_{1s}^{(j+1)},P_{3s}^{(j)})$
                   & share  & $B\times_3P_3^{(j)}$, \\
        $C_{3s}(P_{1s}^{(j+1)},P_{2s}^{(j+1)})$, & $C_{1s}(P_{2s}^{(j+1)},P_{3s}^{(j+1)})$
                   & share  & $B\times_2P_2^{(j+1)}$, \\
        $C_{2s}(P_{1s}^{(j+2)},P_{3s}^{(j+1)})$, & $C_{3s}(P_{1s}^{(j+2)},P_{2s}^{(j+2)})$
                   & share  & $B\times_1P_1^{(j+2)}$. \\
          \hline
        \end{tabular}
        $$
  \item At Line 3, the thin SVD provides an efficient way to compute the orthonormal polar factors.
        It is important to note that $P_{\ell}^{(j+1)}$, as an orthonormal polar factor may not be
        uniquely determined, similarly to what we commented in \Cref{rk:polar-nonuniqueness}.
        If that happens, any particular orthonormal polar factor is just as good as any others, while
        the objective will still go up as dictated by \Cref{thm:f(UVW)-Ansatz}.
  \item A natural and cheap stopping criterion is through checking if
        \begin{equation}\label{eq:obj-stop}
        |f_j-f_{j-1}|/f_j\le\epsilon_1,
        \end{equation}
        where $\epsilon_1$ is a preselected tolerance. However, it has a potential pitfall, namely
        false convergence, if there is a period of iterations where increases in objective value are extremely small. For safe guard,
        we may check the residual for the KKT condition \eqref{eq:PBTD:KKT}:
        \begin{equation}\label{eq:stop-BTSVD}
       \epsilon_{\KKT,j}:=
            \sum_{\ell=1}^m\frac {\big\|\scrH_{\ell}(P_1^{(j)},\ldots,P_m^{(j)})-P_{\ell}^{(j)}\Lambda_{\ell}(P_1^{(j)},\ldots,P_m^{(j)})\big\|_{\F}}
                {\|B\|_{\F}\|B_{\ufd,\ell}\|_2}\le\epsilon_2,
        \end{equation}
        where $\Lambda_{\ell}(P_1^{(j)},\ldots,P_m^{(j)})=\sym\big(\big[P_{\ell}^{(j)}\big]^{\HH}\scrH_{\ell}(P_1^{(j)},\ldots,P_m^{(j)})\big)$,
        $\sym(X):=(X+X^{\HH})/2$ for a square matrix $X$, and $\epsilon_2$ is another preselected tolerance.
        But computing 
        the left-hand side of \eqref{eq:stop-BTSVD} entails extra work since not all
        $\scrH_{\ell}(P_1^{(j)},\ldots,P_m^{(j)})$ are computed.
        As a comprise, at Line 1, we replace \eqref{eq:stop-BTSVD} with
        \begin{equation}\label{eq:stop-BTSVD'}
        \tilde\epsilon_{\KKT,j}:=\sum_{\ell=1}^m
        \frac {\|\scrH_{\ell}(P_1^{(j+1)},\ldots,P_{\ell-1}^{(j+1)},P_{\ell}^{(j)},\ldots,P_m^{(j)})-P_{\ell}^{(j)}\Omega_{\ell}^{(j)}\|_{\F}}
                 {\|B\|_{\F}\|B_{\ufd,\ell}\|_2}\le\epsilon_2,
        \end{equation}
        where $\Omega_{\ell}^{(j)}=\sym\big(\big[P_{\ell}^{(j)}\big]^{\HH}\scrH_{\ell}(P_1^{(j+1)},\ldots,P_{\ell-1}^{(j+1)},P_{\ell}^{(j)},\ldots,P_m^{(j)})\big)$.

\end{enumerate}
        Computing $\|B_{\ufd,\ell}\|_2$ needed in \eqref{eq:stop-BTSVD} and \eqref{eq:stop-BTSVD'}  can be nontrivial for large $n_{\ell}$, but fortunately for  normalization purpose some rough estimate
        is good enough, e.g., replacing $\|B_{\ufd,\ell}\|_2$ with $\sqrt{\|B_{\ufd,\ell}\|_1\|B_{\ufd,\ell}\|_{\infty}}$.
        Another possibility is to use the Golub-Kahan-Lanczos bidiagonalization \cite{bddrv:2000} and often
        running a few bidiagonalization steps  can produce a very good estimate of $\|B_{\ufd,\ell}\|_2$, for the same reason as in \cite{zhli:2011}.

\subsection{Acceleration with LOCG}\label{ssec:accNPDo}
We will create a LOCG-accelerated version of \Cref{alg:NPDo:PBTD}.
Without loss of generality, let $(P_1^{(-1)},\ldots,P_m^{(-1)})$ be the tuple of approximate maximizers of \eqref{eq:opt-PBTD}
from the very previous iterative step, and $(P_1,\ldots,P_m)$ the  tuple of current approximate maximizers.
We are now looking for the next approximate maximizer tuple
$(P_1^{(1)},\ldots,P_m^{(1)})$, along the line of LOCG (locally optimal conjugate gradient), according to
\begin{subequations}\label{eq:BTSVD-LOCG}
\begin{align}
&(P_1^{(1)},\ldots,P_m^{(1)})=\arg\max_{X_{\ell}\in\STM{k_{\ell}}{n_{\ell}}\,\,\forall\ell}
                 f(X_1,\ldots,X_m) \label{eq:BTSVD-LOCG-1}\\
&\mbox{s.t.}\,\,\cR(X_{\ell})\subseteq\cR([P_{\ell},\scrR_{\ell}(P_1,\ldots,P_m),P_{\ell}^{(-1)}])\,\,\forall\ell,
             \label{eq:BTSVD-LOCG-2}
\end{align}
\end{subequations}
where $\cR(\cdot)$ denotes the range of a matrix, i.e., the subspace spanned by its columns, and, for $1\le\ell\le m$,
\begin{equation}\label{eq:Rs(P1toPm)}
\scrR_{\ell}(P_1,\ldots,P_m)=\scrH_{\ell}(P_1,\ldots,P_m)-P_{\ell}\sym\big(P_{\ell}^{\HH}\scrH_{\ell}(P_1,\ldots,P_m)\big).
\end{equation}
Initially for the first iteration, we don't have $(P_1^{(-1)},\ldots,P_m^{(-1)})$ and
it is understood that $P_{\ell}^{(-1)}$ is absent from \eqref{eq:BTSVD-LOCG-2}, i.e.,
simply
$\cR(X_{\ell})\subseteq\cR([P_{\ell},\scrR_{\ell}(P_1,\ldots,P_m)])$.

We still have to numerically solve \eqref{eq:BTSVD-LOCG}. For that purpose, let $S_{\ell}\in\STM{\hat n_{\ell}}{n_{\ell}}$ be an orthonormal basis matrix of subspace
$\cR\big(\big[P_{\ell},\scrR_{\ell}(P_1,\ldots,P_m),P_{\ell}^{(-1)}\big]\big)$. Generically, $\hat n_{\ell}=3k_{\ell}$ but $\hat n_{\ell}<3k_i$ can happen.
It can be implemented by the Gram-Schmidt orthogonalization process. For example, for $\cR\big(\big[P_{\ell},\scrR_{\ell}(P_1,\ldots,P_m),P_{\ell}^{(-1)}\big]\big)$,
we notice that $P_{\ell}\in\STM{k_{\ell}}{n_{\ell}}$ already. In MATLAB, to fully take advantage of its optimized functions, we simply set, using $S_1$ as an example,
$S_1=[\scrR_1(P_1,\ldots,P_m),P_1^{(-1)}]$ (or $S_1=\scrR_1(P_1,\ldots,P_m)$ for the first iteration) and then  do
\begin{equation}\label{eq:W-compute}
\framebox{
\begin{minipage}{6.5cm}
\tt  S1=S1-P1*(P1'*S1); S1=orth(S1); \\
\tt S1=S1-P1*(P1'*S1); S1=orth(S1);\\
\tt S1=[P1,S1];
\end{minipage}
}
\end{equation}
where the first two lines  perform the classical Gram-Schmidt orthogonalization twice to almost ensure that
the resulting  columns of $S_1$ are fully orthogonal to the columns of $P_1$ at the end of the second line,
and {\tt orth} is a MATLAB function for
orthogonalization\footnote{Another option is to use MATLAB's thin {\tt qr}:
    {\tt [S1,$\sim$]=qr(S1,0)}.
    }.
It is important to note that the first $k_1$ columns of
the final $S_1$ are the same as those of $P_1$. Similarly, we compute other $S_{\ell}\in\STM{\hat n_{\ell}}{n_{\ell}}$ such that
the first $k_{\ell}$ columns of $S_{\ell}$ are $P_{\ell}$. So we get
$$
\cR\big(\big[P_{\ell},\scrR_{\ell}(P_1,\ldots,P_m),P_{\ell}^{(-1)}\big]\big)=\cR(S_{\ell})\,\,\forall\ell.
$$
Hence \eqref{eq:BTSVD-LOCG-2} can be equivalently stated as
\begin{subequations}\label{eq:LOCGsub}
\begin{equation}\label{eq:LOCGsub:Y}
X_{\ell}=S_{\ell}Y_{\ell}\quad\mbox{for}\,\,\, 1\le\ell\le m.
\end{equation}
Problem \eqref{eq:BTSVD-LOCG} becomes
\begin{equation}\label{eq:LOCGsub-1}
(Y_{1;\opt},\ldots,Y_{m;\opt})=\arg\max_{Y_{\ell}\in\STM{k_{\ell}}{\hat n_{\ell}}\,\,\forall\ell}
             \wtd f(Y_1,Y_2,\ldots,Y_m),
\end{equation}
where, upon setting $\wtd B=B\times_1 S_1^{\HH}\times_2 S_2^{\HH}\cdots\times_m S_m^{\HH}
              \in\bbC^{\hat n_1\times \hat n_2\times\cdots\times \hat n_m}$,
\begin{align}
\wtd f(Y_1,Y_2,\ldots,Y_m)&=f(S_1Y_1,S_2Y_2,\ldots,S_mY_m) \nonumber \\
   &=\sum_{\ell=1}^N
     \big\|\BDiag_{(\tau_{k_1},\ldots,\tau_{k_m})}(\wtd B\times_1 Y_1^{\HH}\times_2 Y_2^{\HH}\cdots\times_m Y_m^{\HH})\big\|_{\F}^2,
       \label{eq:obj-eq:LOCGsub}
\end{align}
in the same form of \eqref{eq:opt-PBTD} but with much smaller $\wtd B$.
\end{subequations}
Solving the reduced problem \eqref{eq:LOCGsub} by \Cref{alg:NPDo:PBTD}
is much faster.
\Cref{alg:accNPDo:PBTD} outlines what we have discussed so far.

\begin{algorithm}[t]
\caption{The LOCG-accelerated NPDo for solving \eqref{eq:opt-PBTD}}
\label{alg:accNPDo:PBTD}
\begin{algorithmic}[1]
\REQUIRE $m$-mode tensor $B\in\bbC^{n_1\times n_2\times\cdots\times n_m}$
         (and, accordingly, matrix-valued functions
         $\scrH_{\ell}(\cdots)$ in \eqref{eq:opt-PBTD-partdiff}),
         and initial $(P_1^{(0)},\ldots,P_m^{(0)})$ with $P_{\ell}^{(0)}\in\STM{k_{\ell}}{n_{\ell}}\,\,\forall\ell$;
\ENSURE  an approximate maximizer tuple of \eqref{eq:opt-PBTD}.
\STATE $P_{\ell}^{(-1)}=[\,]$ (null matrix) for $1\le\ell\le m$;
\FOR{$j=0,1,\ldots$ until convergence}
    \STATE compute $R_{\ell|j}=\scrR_{\ell}(P_1^{(j)},\ldots,P_m^{(j)})$ for $1\le\ell\le m$, where
           $\scrR_{\ell}(P_1^{(j)},\ldots,P_m^{(j)})$ is calculated according to \eqref{eq:Rs(P1toPm)};
    \STATE compute $S_{\ell|j}\in\STM{\hat n_{\ell}}{n_{\ell}}$ such that
           $\cR(S_{\ell|j})=\cR\big(\big[P_{\ell}^{(j)},R_{\ell|j},P_{\ell}^{(j-1)}\big]\big)$, similarly to \eqref{eq:W-compute}
           for $S_1$;
    \STATE solve \eqref{eq:LOCGsub-1} for $(Y_{1;\opt},\ldots,Y_{m;\opt})$ by \Cref{alg:NPDo:PBTD}
          with initially $Y_{\ell}^{(0)}$
           being the first $k_{\ell}$ columns of $I_{\hat n_{\ell}}$;
    \STATE $P_{\ell}^{(j+1)}=S_{\ell|j}Y_{\ell;\opt}$ for $1\le\ell\le m$;
\ENDFOR
\RETURN last $(P_1^{(j)},\ldots,P_m^{(j)})$.
\end{algorithmic}
\end{algorithm}

\begin{remark}\label{rk:SCF4npd+LOCG}
There are a few  comments in order, regarding \Cref{alg:accNPDo:PBTD}.
\begin{enumerate}[(i)]
  \item The stopping criterion \eqref{eq:stop-BTSVD} can be used at Line 2;
  \item It is important to compute $S_{\ell}$ at Line~4 in such a way, as explained moments ago, that its first $k_{\ell}$
        columns are exactly the same as those of $P_{\ell}^{(j)}$.
        This is because as $P_{\ell}^{(j)}$ converges, $P_{\ell}^{(j+1)}$ changes little from $P_{\ell}^{(j)}$ and hence
        $Y_{1;\opt}$ is increasingly close to the first $k_{\ell}$ columns of $I_{\hat n_{\ell}}$.
        The same can be said about $S_2$. This explains the choice of $Y_{\ell}^{(0)}$
        at Line~5.
  \item At Line 4, some saving can be achieved by reusing qualities that are already computed. For example, we may use
        \begin{align*}
        B\times_1 \big(P_1^{(j+1)}\big)^{\HH}&=\big(B\times_1 S_1^{\HH}\big)\times_1 Y_{1;\opt}^{\HH}, \\
        B\times_1 \big(P_1^{(j+1)}\big)^{\HH}\times_2 \big(P_2^{(j+1)}\big)^{\HH}
           &=\big(B\times_1 S_1^{\HH}\times_2 S_2^{\HH}\big)\times_1 Y_{1;\opt}^{\HH}\times_2 Y_{2;\opt}^{\HH},
        \end{align*}
        and, similarly for others,
        to compute the left-hand sides, since $B\times_1 S_1^{\HH}, B\times_1 S_1^{\HH}\times_2 S_2^{\HH},\ldots$
        have been already computed earlier.
  \item An area of improvement is to solve \eqref{eq:LOCGsub-1} with an accuracy, comparable but fractionally better than the
        current $(P_1^{(j)},\ldots,P_m^{(j)})$ as an approximate solution of \eqref{eq:opt-PBTD}.
        Specifically, if we use
        \eqref{eq:stop-BTSVD} at Line~2 here to stop the for-loop: Lines 2--7, with tolerance $\epsilon$, then instead of using the same
        $\epsilon$ for \Cref{alg:NPDo:PBTD} at its Line 1 when the algorithm is called here at Line 5,
        we can use a fraction, say $1/8$, of the total normalized KKT residual, the left-hand side of
        \eqref{eq:stop-BTSVD}, at the current approximation $(P_1^{(j)},\ldots,P_m^{(j)})$ as stopping tolerance within the call to \Cref{alg:NPDo:PBTD}.
\end{enumerate}
\end{remark}

\section{Convergence Analysis}\label{sec:cvg-NPDo4PTBD}
In this section, we will perform a  convergence analysis for \Cref{alg:NPDo:PBTD,alg:accNPDo:PBTD}.
For the sake of presentation, recalling \eqref{eq:Cs} -- \eqref{eq:opt-PBTD-partdiff}, we introduce
\begin{subequations}\label{eq:scrH4proof}
\begin{align}
&\scrH_{\ell}^{(j)}=\scrH_{\ell}(P_1^{(j)},\ldots,P_m^{(j)})\,\,\mbox{for $1\le\ell\le m$, and},  \label{eq:scrH4proof-1}\\
&\what\scrH_{\ell}^{(j)}=\scrH_{\ell}(P_1^{(j+1)},\ldots,P_{\ell-1}^{(j+1)},P_{\ell}^{(j)},\ldots,P_m^{(j)})
    \,\,\mbox{for $1\le\ell\le m$}. \label{eq:scrH4proof-2}
\end{align}
\end{subequations}
By convention, $\what\scrH_1^{(j)}=\scrH_1^{(j)}$.
Also
$\Theta(\cdot,\cdot)$ is the diagonal matrix of the canonical angles between two subspaces of an equal dimension
(see, e.g., \cite{li:2026,stsu:1990}).



\subsection{Convergence Analysis for \Cref{alg:NPDo:PBTD}}
The following lemma is an equivalent restatement of \cite[Lemma 4.10]{moso:1983}
(see also \cite[Proposition 7]{kaqi:1999}) in the context of a metric space.
We will need it to prove one of the conclusions in our main theorem in this subsection, \Cref{thm:cvg4SCF4NPDo-GS:PBTD} below.

\begin{lemma}[{\cite[Lemma 4.10]{moso:1983}}]\label{lm:isolatedconvg}
Let $\scrG$ be a metric space with metric $\dist(\cdot,\cdot)$, and let
$\{\by_i\}_{i=0}^{\infty}$ be a sequence in $\scrG$. If
$\by_*\in \scrG$ is an isolated accumulation point 	
of the sequence such that, for every subsequence $\{\by_i\}_{i\in\bbI}$
converging to $\by_*$, there is an infinite subset $\widehat{\bbI}\subseteq \bbI$ satisfying
$\dist(\by_i,\by_{i+1})\to 0$ as $\what\bbI\ni i\to\infty$,
then the entire sequence $\{\by_i\}_{i=0}^{\infty}$ converges to $\by_*$.
\end{lemma}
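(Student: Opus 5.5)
Suppose, for contradiction, that the whole sequence $\{\by_i\}$ does not converge to $\by_*$. The plan is to build a subsequence that converges to $\by_*$ but along which consecutive distances stay bounded away from $0$. Since $\by_*$ is an isolated accumulation point, fix $\delta>0$ so that $\by_*$ is the only accumulation point in the closed ball $\bar B(\by_*,2\delta)$. Because the sequence does not converge to $\by_*$, infinitely many terms satisfy $\dist(\by_i,\by_*)>\delta$. Because $\by_*$ is an accumulation point, infinitely many terms satisfy $\dist(\by_i,\by_*)<\delta/2$. So the sequence enters the small ball $B(\by_*,\delta/2)$ and leaves the ball $B(\by_*,\delta)$ infinitely often.

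Next, for each entry into $B(\by_*,\delta)$, record the \emph{last} index before the sequence next leaves that ball. This gives an infinite index set $\bbI$ with
\[
\dist(\by_i,\by_*)\le\delta \quad\mbox{and}\quad \dist(\by_{i+1},\by_*)>\delta \qquad \mbox{for all } i\in\bbI.
\]
To keep this set infinite, one may choose between consecutive visits to $B(\by_*,\delta/2)$ and the subsequent exits.

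The key step is to show that $\{\by_i\}_{i\in\bbI}$ converges to $\by_*$. Every term $\by_i$ with $i\in\bbI$ lies in $\bar B(\by_*,\delta)$. Any accumulation point of this subsequence lies in that closed ball, and by the choice of $\delta$ it must equal $\by_*$. The difficulty is that a general metric space need not be locally compact, so the subsequence might have no accumulation point at all. I expect this to be the main obstacle.

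I would handle it in one of two ways. The first option is to work in the setting where the lemma is applied. There the sequence lies in a compact set, namely a product of Stiefel manifolds, or Grassmannians modulo the block-diagonal orthogonal action. Under that assumption, every subsequence has a convergent sub-subsequence, and the limit must be $\by_*$. So I would state the lemma under the standing assumption that $\{\by_i\}$ lies in a compact subset of $\scrG$, and read the reference \cite{moso:1983} this way. The second option avoids compactness: replace $\bbI$ by those $i\in\bbI$ with $\dist(\by_i,\by_*)\to0$ along a suitable diagonal choice. This only works if infinitely many exit indices come close to $\by_*$, which compactness guarantees, so compactness is genuinely needed.

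With $\{\by_i\}_{i\in\bbI}\to\by_*$ in hand, I apply the hypothesis. It gives an infinite $\what\bbI\subseteq\bbI$ with $\dist(\by_i,\by_{i+1})\to0$ as $\what\bbI\ni i\to\infty$. On the other hand, by the triangle inequality,
\[
\dist(\by_i,\by_{i+1})\ge\dist(\by_{i+1},\by_*)-\dist(\by_i,\by_*)>\delta-\dist(\by_i,\by_*)\to\delta>0 .
\]
This contradiction shows that the whole sequence converges to $\by_*$.
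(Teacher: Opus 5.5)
\textbf{Assessment.} Your proposal does not prove the lemma as written, but no proof can: for an arbitrary metric space the statement is false. What you do prove, with a compactness hypothesis added, is the correct version, and it covers the paper's only use of the lemma. Your exit-index argument is the standard one. It is essentially the proof of the cited Mor\'e--Sorensen lemma, which the paper quotes without giving a proof of its own. It is correct once compactness is available, and the essential use of compactness is exactly where you flag it: to force $\{\by_i\}_{i\in\bbI}\subset\bar B(\by_*,\delta)$ to converge to $\by_*$.

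\textbf{Why compactness cannot be dropped.} You are right that this cannot be avoided, and the defect is in the statement rather than in your proof. In $\ell^2$ with orthonormal vectors $e_1,e_2,\ldots$, let the sequence run through the consecutive blocks $0,\tfrac1k e_k,\tfrac2k e_k,\ldots,e_k$ for $k=1,2,\ldots$.
\begin{enumerate}[(i)]
  \item Along any subsequence the block index $k$ tends to $\infty$, so the terms converge weakly to $0$. A norm limit must equal the weak limit, so $0$ is the only accumulation point, and in particular it is isolated.
  \item The sequence does not converge, because $\|e_k\|=1$ infinitely often.
  \item Along any subsequence converging to $0$, one eventually has $\by_i=\tfrac jk e_k$ with $j<k$. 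Hence $\dist(\by_i,\by_{i+1})=1/k\to 0$ on all of $\bbI$, so the hypothesis holds.
\end{enumerate}
So a compactness hypothesis must be added to the lemma. Compactness of a single closed ball $\bar B(\by_*,\delta)$ suffices; this is the $\bbR^n$ setting of the cited references. Your assumption that the whole sequence lies in a compact set is a special case of this. It covers Theorem~\ref{thm:cvg4SCF4NPDo-GS:PBTD}(d), where the iterates lie in a product of Stiefel manifolds.

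\textbf{Small repairs.}
\begin{enumerate}[(i)]
  \item For a $\delta$ fixed in advance, non-convergence only gives infinitely many terms with $\dist(\by_i,\by_*)\ge\epsilon_0$ for some $\epsilon_0>0$, not with $\dist(\by_i,\by_*)>\delta$. Either choose $\delta<\epsilon_0$ (shrinking $\delta$ preserves isolation), or derive the claim from compactness of the annulus $\epsilon_0\le\dist(\cdot,\by_*)\le\delta$.
  \item The radius $2\delta$ and the ball of radius $\delta/2$ are superfluous. Infinitely many terms inside $\bar B(\by_*,\delta)$ and infinitely many outside already give infinitely many exit indices.
  \item Your ``second option'' can be dropped: the example above shows that no compactness-free argument exists.
\end{enumerate}
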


\begin{theorem}\label{thm:cvg4SCF4NPDo-GS:PBTD}
Let the sequence $\{(P_1^{(j)},P_2^{(j)},\ldots,P_m^{(j)})\}_{j=0}^{\infty}$ be generated by \Cref{alg:NPDo:PBTD},
$(P_{*1},P_{*2},\ldots,P_{*m})$ an accumulation point of $\{(P_1^{(j)},P_2^{(j)},\ldots,P_m^{(j)})\}_{j=0}^{\infty}$ whose
subsequence $\{(P_1^{(j)},P_2^{(j)},\ldots,P_m^{(j)})\}_{j\in\bbI}$ converges to $(P_{*1},P_{*2},\ldots,P_{*m})$, and
$(\what P_{*1},\what P_{*2},\ldots,\what P_{*\,m-1})$ an accumulation point of $\{(P_1^{(j+1)},P_2^{(j+1)},\ldots,P_{m-1}^{(j+1)})\}_{j\in\bbI}$.
Denote by\footnote{Notice that $\what \scrH_{*1}=\scrH_1(P_{*2},\ldots,P_{*m})=\scrH_{*1}$.},
for $1\le\ell\le m$,
\begin{subequations}\label{eq:Hell-star-all}
\begin{align}
\scrH_{*\ell}&:=\scrH_{\ell}(P_{*1},\ldots,P_{*\,\ell-1},P_{*\ell},\ldots,P_{*m}), \label{eq:scrHell-star} \\
\what \scrH_{*\ell}&:=\scrH_{\ell}(\what P_{*1},\ldots,\what P_{*\ell-1},P_{*\,\ell},\ldots,P_{*m}). \label{eq:hatscrHell-star}
\end{align}
\end{subequations}
The following statements hold.
\begin{enumerate}[{\rm (a)}]
  \item The sequence $\{f(P_1^{(j)},\ldots,P_m^{(j)})\}_{j=0}^{\infty}$ is monotonically increasing and convergent;
  \item We have, for $1\le\ell\le m$,
        \begin{equation}\label{eq:BTSVD:KKTatMAX'-GS}
        \what \scrH_{*\ell}\,P_{*\ell}=P_{*\ell}\Lambda_{*\ell}, \quad
        \Lambda_{*\ell}=P_{*\ell}^{\HH}\,\what \scrH_{*\ell}\succeq 0.
        \end{equation}
%
        As a result, \eqref{eq:PBTD:KKTatMAX} holds if also $\what P_{*\ell}=P_{*\ell}$ for $1\le\ell\le m-1$.
  \item In item~{\rm (b)}, if for $1\le\ell\le m-1$
        \begin{equation}\label{eq:full-rank2}
        \rank(\scrH_{\ell}(P_{*1},\ldots,P_{*m}))=k_{\ell},
        \end{equation}
        then $\what P_{*\ell}=P_{*\ell}$ for $1\le\ell\le m-1$
        and hence $(P_{*1},\ldots,P_{*m})$ satisfies the KKT condition~\eqref{eq:PBTD:KKT},
        i.e., \eqref{eq:PBTD:KKTatMAX} holds.
  \item If $(P_{*1},\ldots,P_{*m})$ is an isolated accumulation point of $\{(P_1^{(j)},\ldots,P_m^{(j)})\}_{j=0}^{\infty}$
        and if
        \begin{equation}\label{eq:full-rank3}
        \rank(\scrH_{\ell}(P_{*1},\ldots,P_{*m}))=k_{\ell}\quad
         \mbox{for $1\le\ell\le m$},
        \end{equation}
        then the entire sequence $\{(P_1^{(j)},\ldots,P_m^{(j)})\}_{j=0}^{\infty}$ converges to $(P_{*1},\ldots,P_{*m})$.
  \item We have $2m$ convergent series
        \begin{subequations}\label{eq:cvg4SCF4NPDo-GS:BTSVD:series}
        \begin{align}
        \sum_{j=0}^{\infty}\sigma_{\min}(\what\scrH_{\ell}^{(j)})\,
                         \big\|\sin\Theta\big(\cR(P_{\ell}^{(j+1)}),\cR(P_{\ell}^{(j)})\big)\big\|_{\F}^2
                      &<\infty,    \label{eq:cvg4SCF4NPDo-GS:BTSVD:series-1a} \\
        \sum_{j=0}^{\infty}\sigma_{\min}(\what\scrH_{\ell}^{(j)})\,
                  \frac {\big\|\what\scrH_{\ell}^{(j)}-P_{\ell}^{(j)}\big([P_{\ell}^{(j)}]^{\HH}\what\scrH_{\ell}^{(j)}\big)\big\|_{\F}^2}
                        {\big\|\what\scrH_{\ell}^{(j)}\big\|_{\F}^2}
                      &<\infty,                 \label{eq:cvg4SCF4NPDo-GS:BTSVD:series-2a}
        \end{align}
        \end{subequations}
        for $1\le\ell\le m$.
\end{enumerate}
\end{theorem}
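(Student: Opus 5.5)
The engine of the proof is \Cref{thm:f(UVW)-Ansatz}, combined with a quantitative lower bound on the per-step gain produced by a polar-factor update. Write $f_j=f(P_1^{(j)},\ldots,P_m^{(j)})$. Within sweep $j$, Line~3 sets $P_\ell^{(j+1)}$ to be an orthonormal polar factor of $\what\scrH_\ell^{(j)}$. Hence $\tr([P_\ell^{(j+1)}]^{\HH}\what\scrH_\ell^{(j)})=\|\what\scrH_\ell^{(j)}\|_{\tr}$. \Cref{thm:f(UVW)-Ansatz}, applied with
\[
\eta_\ell^{(j)}=\|\what\scrH_\ell^{(j)}\|_{\tr}-\Re\tr([P_\ell^{(j)}]^{\HH}\what\scrH_\ell^{(j)})\ge 0,
\]
shows that each partial update raises $f$ by at least $\eta_\ell^{(j)}$. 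Chaining the $m$ partial updates gives $f_{j+1}\ge f_j+\sum_\ell\eta_\ell^{(j)}$. Since $f\le\|B\|_{\F}^2$ on the product of Stiefel manifolds, this proves (a). It also gives $\sum_j\sum_\ell\eta_\ell^{(j)}<\infty$, which is the main quantitative input for everything else.

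For (e) I would prove a matrix inequality of the form
\[
\eta_\ell^{(j)}\ \ge\ c\,\sigma_{\min}(\what\scrH_\ell^{(j)})\,\big\|\sin\Theta(\cR(P_\ell^{(j+1)}),\cR(P_\ell^{(j)}))\big\|_{\F}^2 .
\]
Take the polar decomposition $\what\scrH=Q\Lambda$ with $\Lambda\succeq0$ and $Q=P_\ell^{(j+1)}$, and write $P=P_\ell^{(j)}$. Then
\[
\eta=\tr(\Lambda)-\Re\tr(P^{\HH}Q\Lambda)=\Re\tr\big((I-P^{\HH}Q)\Lambda\big)\ge\sigma_{\min}\,\Re\tr(I-P^{\HH}Q).
\]
This step uses that the Hermitian part of $I-P^{\HH}Q$ is positive semidefinite, because $\|P^{\HH}Q\|_2\le1$. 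Next, $\Re\tr(I-P^{\HH}Q)=\tfrac12\|Q-P\|_{\F}^2$, and $\|Q-P\|_{\F}\ge\|\sin\Theta\|_{\F}$. Together these give \eqref{eq:cvg4SCF4NPDo-GS:BTSVD:series-1a}.

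For \eqref{eq:cvg4SCF4NPDo-GS:BTSVD:series-2a}, I would bound the residual $(I-PP^{\HH})\what\scrH=(I-PP^{\HH})(Q-P)\Lambda$. Its Frobenius norm is at most $\|Q-P\|_{\F}\|\Lambda\|_2\le\|Q-P\|_{\F}\|\what\scrH\|_{\F}$. Dividing by $\|\what\scrH\|_{\F}^2$ and invoking the same $\eta$ bound gives the second series. This inequality chain, and especially the handling of rank-deficient $\what\scrH$ where $\Lambda$ is singular, is where I expect the real care is needed. The factor $\sigma_{\min}$ absorbs the degeneracy, so the argument should go through.

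For (b), pass to the subsequence $j\in\bbI$, and along a further subsequence also $P_\ell^{(j+1)}\to\what P_{*\ell}$ for $\ell\le m-1$. By continuity, $\what\scrH_\ell^{(j)}\to\what\scrH_{*\ell}$ and $\eta_\ell^{(j)}\to\|\what\scrH_{*\ell}\|_{\tr}-\Re\tr(P_{*\ell}^{\HH}\what\scrH_{*\ell})$. Summability forces this limit to be zero. By the equality case in \Cref{rk:polar-nonuniqueness}, $P_{*\ell}$ is then an orthonormal polar factor of $\what\scrH_{*\ell}$, which gives \eqref{eq:BTSVD:KKTatMAX'-GS}. (I read the left side as $\what\scrH_{*\ell}$ itself.)

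For (c), I would argue by induction on $\ell$. For $\ell=1$, $\what\scrH_{*1}=\scrH_{*1}$ has full rank $k_1$, so its polar factor is unique. Both $P_{*1}$ and $\what P_{*1}$ (the limit of polar factors of $\scrH_1^{(j)}$, using continuity of the polar factor at full rank) are polar factors of it, hence they coincide. Then $\what\scrH_{*2}=\scrH_{*2}$, and the step repeats up to $m-1$. For (d), apply \Cref{lm:isolatedconvg}. Full rank at the limit for all $\ell$ forces $\sigma_{\min}(\what\scrH_\ell^{(j)})\ge\delta>0$ along the subsequence, by (c) and continuity. Summability from (e) then gives $\|P_\ell^{(j+1)}-P_\ell^{(j)}\|_{\F}\to0$ along it, since $\eta\ge\tfrac12\sigma_{\min}\|Q-P\|_{\F}^2$ bounds the actual matrix distance and not only the angles. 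This is exactly the hypothesis of the lemma, so the entire sequence converges.
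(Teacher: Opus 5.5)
Your proof is correct and has the same skeleton as the paper's:
\begin{itemize}
\item per-step gains $\eta_\ell^{(j)}$ from \Cref{thm:f(UVW)-Ansatz} and their summability for (a);
\item a limit argument for (b);
\item uniqueness of the polar factor under full rank for (c);
\item \Cref{lm:isolatedconvg} for (d).
\end{itemize}
Two steps are done differently.

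For (e), the paper imports two inequalities from an external theorem: $\|\sin\Theta(\cR(P_\ell^{(j+1)}),\cR(P_\ell^{(j)}))\|_{\F}^2\le 2\eta_\ell^{(j)}/\sigma_{\min}(\what\scrH_\ell^{(j)})$, and a bound on the residual ratio by $\|\sin\Theta\|_{\F}^2$. You instead derive, from $\eta=\Re\tr((I-P^{\HH}Q)\Lambda)$, the self-contained estimate $\eta_\ell^{(j)}\ge\frac12\sigma_{\min}(\what\scrH_\ell^{(j)})\|P_\ell^{(j+1)}-P_\ell^{(j)}\|_{\F}^2$. This is stronger, because it controls the actual matrix distance and not only subspace angles. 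Your residual bound through $\|Q-P\|_{\F}$ is looser than the paper's through $\sin\Theta$. It still suffices, since your $\eta$-estimate is in the same quantity.

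That stronger estimate is what drives your version of (d). Once $\what\scrH_\ell^{(j)}\to\scrH_{*\ell}$ along the subsequence with $\sigma_{\min}$ bounded away from zero, $\eta_\ell^{(j)}\to0$ gives $\|P_\ell^{(j+1)}-P_\ell^{(j)}\|_{\F}\to0$ for every $\ell$, including $\ell=m$. The paper instead extracts a further subsequence with $P_m^{(j+1)}\to\what P_{*m}$. It then identifies $\what P_{*m}=P_{*m}$ by uniqueness of the polar factor of $\scrH_{*m}$. Applied mode by mode, starting from $\what\scrH_1^{(j)}=\scrH_1^{(j)}\to\scrH_{*1}$, your estimate would also give (c) without invoking polar-factor uniqueness at all.

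Your treatment of (b) passes to the limit in $\eta_\ell^{(j)}\to0$ and uses the equality case in \Cref{rk:polar-nonuniqueness}. This is simply the direct form of the paper's contradiction argument with $\delta$. The paper's reliance on the cited inequality keeps its proof shorter. Your route makes the whole argument self-contained.
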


\begin{proof}
The flow of computations goes as follows:
for $j=0,1,2,\ldots$
\begin{align}
    (P_1^{(j)},P_2^{(j)},\ldots,P_m^{(j)})
&\to (P_1^{(j+1)},P_2^{(j)},\ldots,P_m^{(j)}) \nonumber\\
&\to (P_1^{(j+1)},P_2^{(j+1)},P_3^{(j)},\ldots,P_m^{(j)}) \nonumber\\
&\to \cdots \nonumber\\
&\to (P_1^{(j+1)},P_2^{(j+1)},\ldots,P_m^{(j+1)}). \label{eq:flow-GS}
\end{align}
Recall \eqref{eq:scrH4proof-1} and let, for $1\le\ell\le m$,
\begin{subequations}\label{eq:cvg4HOOI:TD:pf-1}
\begin{align}
\eta_{j+\ell/m}&=\tr\big([P_{\ell}^{(j+1)}]^{\HH}\scrH_{\ell}^{(j)}\big)-\tr\big([P_{\ell}^{(j)}]^{\HH}\scrH_{\ell}^{(j)}\big)
            \label{eq:cvg4SCF4NPDo-GS:BTSVD:pf-1aa}\\
          &=\|\scrH_{\ell}^{(j)}\|_{\tr}-\tr\big([P_{\ell}^{(j)}]^{\HH}\scrH_{\ell}^{(j)}\big),
                 \label{eq:cvg4SCF4NPDo-GS:BTSVD:pf-1ab}
\end{align}
\end{subequations}
due to how
$P_{\ell}^{(j+1)}$ is defined in \Cref{alg:NPDo:PBTD}. All
$\eta_{j+\ell/m}$ for $1\le\ell\le m$ are nonnegative by \cite[Lemma~4.2]{li:2026}. Now use
\Cref{thm:f(UVW)-Ansatz} to conclude that, for $1\le\ell\le m$,
\begin{multline}\label{eq:cvg4SCF4NPDo-GS:BTSVD:pf-2'}
f(P_1^{(j+1)},\ldots,P_{\ell-1}^{(j+1)},P_{\ell}^{(j+1)},P_{\ell+1}^{(j)}\,\ldots,P_m^{(j)}) \\
   \ge f(P_1^{(j+1)},\ldots,P_{\ell-1}^{(j+1)},P_{\ell}^{(j)},P_{\ell+1}^{(j)}\,\ldots,P_m^{(j)})+ \eta_{j+\ell/m},
\end{multline}
yielding
\begin{align*}
f(P_1^{(j+1)},P_2^{(j+1)},\ldots,P_m^{(j+1)})&\ge f(P_1^{(j+1)},\ldots,P_{m-1}^{(j+1)},P_m^{(j)})\\
    &\ge \cdots \\
    &\ge f(P_1^{(j)},P_2^{(j)},\ldots,P_m^{(j)}).
\end{align*}
This proves item (a).
Along the way, we also showed
\begin{equation}\label{eq:cvg4SCF4NPDo-GS:BTSVD:pf-3}
f(P_1^{(j+1)},\ldots,P_m^{(j+1)})\ge f(P_1^{(j)},\ldots,P_m^{(j)})
   +\sum_{\ell=1}^m\Big[\|\what\scrH_{\ell}^{(j)}\|_{\tr}-\tr\big([P_{\ell}^{(j)}]^{\HH}\what\scrH_{\ell}^{(j)}\big)\Big].
\end{equation}

In what follows, without loss of generality, we may also assume that
$$
\big\{\big(P_1^{(j+1)},P_2^{(j+1)},\ldots,,P_{m-1}^{(j+1)}\big)\big\}_{j\in\bbI}
$$
converges to
$(\what P_{*1},\what P_{*2},\ldots,\what P_{*\,m-1})$; otherwise,
we can pick up a convergent subsequence of $\{(P_1^{(j+1)},P_2^{(j+1)},\ldots,,P_{m-1}^{(j+1)})\}_{j\in\bbI}$
and reassign $\bbI$ accordingly.
We have
\begin{subequations}\label{eq:cvg4SCF4NPDo-GS:BTSVD:pf-5}
\begin{alignat}{2}
\lim_{\bbI\ni j\to \infty}P_{\ell}^{(j)}&=P_{*\ell} &\quad&\mbox{for $1\le\ell\le m$}, \\
\lim_{\bbI\ni j\to \infty}P_{\ell}^{(j+1)}&=\what P_{*\ell} &\quad&\mbox{for $1\le\ell\le m-1$}.
\end{alignat}
\end{subequations}
%

We now prove item~(b). Notice that the contrary to \eqref{eq:BTSVD:KKTatMAX'-GS} is
\begin{equation}\label{eq:cvg4SCF4NPDo-GS:BTSVD:pf-6}
\mbox{either $\cR(\what \scrH_{*\ell})\not\subseteq\cR(P_{*\ell})$ or
$P_{*\ell}^{\HH}\,\what \scrH_{*\ell}\not\succeq 0$}.
\end{equation}
Hence the contrary to \eqref{eq:BTSVD:KKTatMAX'-GS} for $1\le\ell\le m$ is that at least one of the $2m$ relations in \eqref{eq:cvg4SCF4NPDo-GS:BTSVD:pf-6} is true, which, by \cite[Lemma~4.2]{li:2026}, implies
$\delta:=\delta_1+\cdots+\delta_m>0$ where
$$
\delta_{\ell}:=\|\what \scrH_{*\ell})\|_{\tr}-\tr(P_{*\ell}^{\HH}\what \scrH_{*\ell})\ge 0.
$$
Since $\|\scrH_{\ell}(P_1,\ldots,P_m)\|_{\tr}$ and $\tr(P_{\ell}^{\HH}\scrH_{\ell}(P_1,\ldots,P_m))$ for $1\le\ell\le m$, and $f(P_1,\ldots,P_m)$ are continuous in
$(P_1,\ldots,P_m)\in\bbC^{n_1\times k_1}\times\cdots\times\bbC^{n_m\times k_m}$,
there is an $j_0\in\bbI$ such that
\begin{subequations}\label{eq:cvg4SCF4NPDo-GS:BTSVD:pf-7}
\begin{gather}
\Big|\|\what\scrH_{*\ell}\|_{\tr}-\|\what\scrH_{\ell}^{(j_0)}\|_{\tr}\Big| <\frac {\delta}{2m+1}, \label{eq:thm:cvg4SCF4NPDo-GS:pf-7a}\\
\Big|\tr\big((P_{\ell}^{(j_0)})^{\HH}\what\scrH_{\ell}^{(j_0)}\big)-\tr\big(P_{*\ell}^{\HH}\what\scrH_{*\ell}\big)\Big|
         <\frac {\delta}{2m+1},     \label{eq:thm:cvg4SCF4NPDo-GS:pf-7b}\\
f(P_{*1},\ldots,P_{*m})-\frac {\delta}{2(2m+1)}<f(P_1^{(j_0)},\ldots,P_m^{(j_0)})\le f(P_{*1},\ldots,P_{*m}).
      \label{eq:thm:cvg4SCF4NPDo-GS-subs-1:pf-7e}
\end{gather}
\end{subequations}
By \eqref{eq:cvg4SCF4NPDo-GS:BTSVD:pf-3}, we have
\begin{align*}
f(P_1^{(j_0+1)},\ldots,P_m^{(j_0+1)})&\ge f(P_1^{(j_0)},\ldots,P_m^{(j_0)}) \\
  & \quad    +\sum_{\ell=1}^m\Big[\|\what\scrH_{\ell}^{(j_0)}\|_{\tr}-\tr\big([P_{\ell}^{(j_0)}]^{\HH}\what\scrH_{\ell}^{(j_0)}\big)\Big]  \\
  &>f(P_{*1},\ldots,P_{*m})-\frac {\delta}{2(2m+1)} \\
  & \quad     +\sum_{\ell=1}^m\Big[\|\what\scrH_{*\ell}\|_{\tr}-\frac {\delta}{2m+1}-\tr(P_{*\ell}^{\HH}\what\scrH_{*\ell})
                -\frac {\delta}{2m+1}\Big] \\
  &=f(P_{*1},\ldots,P_{*m})+\frac {\delta}{2(2m+1)}>f(P_{*1},\ldots,P_{*m}),
\end{align*}
contradicting $f(P_1^{(i)},\ldots,P_m^{(i)})\le\lim_{j\to\infty}f(P_1^{(j)},\ldots,P_m^{(j)})= f(P_{*1},\ldots,P_{*m})$
for all $1\le i<\infty$.
This completes the proof of item~(b).

We will now show, by induction, that $\what P_{*\ell}=P_{*\ell}$ under \eqref{eq:full-rank2} for $1\le\ell\le m-1$.
Since we always have
$$
\scrH_1(P_1^{(j)},\ldots,P_m^{(j)})=P_1^{(j+1)}\Lambda_1^{(j)},
$$
a polar decomposition. Letting $\bbI\ni j\to\infty$
yields $\scrH_{*1}=\scrH_1(P_{*1},\ldots,P_{*m})=\what P_{*1}\what \Lambda_{1*}$, a polar decomposition also.
If $\rank(\scrH_{*1})=k_1$, then the polar decomposition of $\scrH_{*1}$ is
unique \cite{high:2008,li:1993b,li:1995,li:2014HLA} and hence $\what P_{*1}=P_{*1}$ by the fact that
\eqref{eq:BTSVD:KKTatMAX'-GS} for $\ell=1$ is another polar decomposition of $\scrH_{*1}\equiv \what\scrH_{*1}$.
Suppose now that $\what P_{*\ell}=P_{*\ell}$
for $1\le\ell\le s< m-1$.
Since we have
$$
\scrH_{s+1}(P_1^{(j+1)},\ldots,P_s^{(j+1)},P_{s+2}^{(j)},\ldots,P_m^{(j)})=P_{s+1}^{(j+1)}\Lambda_{s+1}^{(j)},
$$
a polar decomposition.
Letting $\bbI\ni j\to\infty$
yields
\begin{equation}\label{eq:thm:cvg4SCF4NPDo-GS-subs-1:pf-8}
\what\scrH_{s+1}(\what P_{*1},\ldots,\what P_{s*},P_{*\,s+1},\ldots,P_{*m})=\what P_{*\,s+1}\what \Lambda_{*\,s+1},
\end{equation}
implying $\scrH_{*\,s+1}\,\what P_{*\,s+1}=\what P_{*\,s+1}\what \Lambda_{*\,s+1}$ by the induction hypothesis that
$\what P_{*\ell}=P_{*\ell}$ for $1\le\ell\le s< m-1$.
Equation~\eqref{eq:thm:cvg4SCF4NPDo-GS-subs-1:pf-8} is
a polar decomposition also.
This, together with \eqref{eq:full-rank2} for $\ell=s+1$, imply that $\what P_{*\,s+1}=P_{*\, s+1}$, completing the induction proof,
and, at the same time, the proof of item~(c).

We now prove item~(d) with the help of \Cref{lm:isolatedconvg}. Let
$\{(P_1^{(j)},\ldots,P_m^{(j)})\}_{j\in\bbI_1}$ be a convergent subsequence that converges to $(P_{*1},\ldots,P_{*m})$.
Since $\{(P_1^{(j+1)},\ldots,P_m^{(j+1)})\}_{j\in\bbI_1}$ is a bounded sequence, it has a convergent subsequence
$\{(P_1^{(j+1)},\ldots,P_m^{(j+1)})\}_{j\in\bbI_2}$ that converges to $(\what P_{*1},\ldots,\what P_{*m})$,
where $\bbI_2\subseteq\bbI_1$.
Use the argument for the proof of item~(c) to conclude that $\what P_{*\ell}=P_{*\ell}$ for $1\le\ell\le m-1$.
It remains to show $\what P_{*m}=P_{*m}$.
Always $\scrH_m(P_1^{(j+1)},\ldots,P_{m-1}^{(j+1)},P_m^{(j)})=P_m^{(j+1)}\Lambda_m^{(j)}$, a polar decomposition.
Letting $\bbI_1\supseteq\bbI_2\ni j\to\infty$ yields
$\scrH_m(P_{*1},\ldots,P_{*m})=\what P_{*m}\Lambda_{*m}$ due to $\what P_{*\ell}=P_{*\ell}$ for $1\le\ell\le m-1$.
This is yet another polar decomposition of
$\scrH_m(P_{*1},\ldots,P_{*m})=\scrH_{*m}$, besides
\eqref{eq:BTSVD:KKTatMAX'-GS} for $\ell=m$. It follows from \eqref{eq:full-rank3} for $\ell=m$ that
$\scrH_{*m}$ has a unique polar decomposition, implying $\what P_{*m}=P_{*m}$, as expected.
Hence as $\bbI_2\ni j\to\infty$
$$
\|P_{\ell}^{(j)}-P_{\ell}^{(j+1)}\|_{\F}\le\|P_{\ell}^{(j)}-P_{*\ell}\|_{\F}+\|P_{*\ell}-P_{\ell}^{(j+1)}\|_{\F}\to 0
$$
for $1\le\ell\le m$.
Now use Lemma~\ref{lm:isolatedconvg} to conclude that the entire sequence
$\{(P_1^{(j)},\ldots,P_m^{(j)})\}_{i=0}^{\infty}$ converges to $(P_{*1},\ldots,P_{*m})$, as needed.

Finally for item (e), we return to \eqref{eq:cvg4SCF4NPDo-GS:BTSVD:pf-3}.
By \cite[Theorem~4.1]{li:2026}, we have for $1\le\ell\le m$
\begin{equation}\label{eq:thm:cvg4SCF4NPDo-GS-subs-1:pf-9}
\frac {\big\|\what\scrH_{\ell}^{(j)}-P_{\ell}^{(j)}\big([P_{\ell}^{(j)}]^{\HH}\what\scrH_{\ell}^{(j)}\big)\big\|_{\F}^2}
                        {\big\|\what\scrH_{\ell}^{(j)}\big\|_{\F}^2}
    \le\big\|\sin\Theta\big(\cR(P_{\ell}^{(j+1)}),\cR(P_{\ell}^{(j)})\big)\big\|_{\F}^2
    \le\frac {2\eta_{j+\ell/m}}{\sigma_{\min}(\what\scrH_{\ell}^{(j)})}.
\end{equation}
Hence \eqref{eq:cvg4SCF4NPDo-GS:BTSVD:series-2a} is a consequence of \eqref{eq:cvg4SCF4NPDo-GS:BTSVD:series-1a}.
To see \eqref{eq:cvg4SCF4NPDo-GS:BTSVD:series-1a}, we combine
$$
2\sum_{j=0}^{\infty}\sum_{\ell=1}^m\eta_{j+\ell/m}
     \le\lim_{j\to\infty} f(P_1^{(j+1)},\ldots,P_m^{(j+1)})-f(P_1^{(0)},\ldots,P_m^{(0)})
     <\infty,
$$
with the second inequality in \eqref{eq:thm:cvg4SCF4NPDo-GS-subs-1:pf-9}.
The proof is completed.
\end{proof}

\subsection*{Discussion}
As a corollary of Theorem~\ref{thm:cvg4SCF4NPDo-GS:PBTD}(e),
if $\sigma_{\min}(\scrH_1^{(j)})$ is eventually bounded below away from $0$
uniformly\footnote {By which we mean that there exist a constant $c>0$ and an integer $K$ such that
  $\sigma_{\min}(\scrH_1^{(j)})\ge c$ for all $j\ge K$.},
then
\begin{subequations}\label{eq:NPDo-always}
\begin{equation}\label{eq:NPDo-always-1}
\lim_{j\to\infty}\frac {\big\|\scrH_1^{(j)}-P_1^{(j)}\big([P_1^{(j)}]^{\HH}\scrH_1^{(j)}\big)\big\|_{\F}}
                        {\big\|\scrH_1^{(j)}\big\|_{\F}} =0,
\end{equation}
namely, increasingly $\scrH_1^{(j)}\approx P_1^{(j)}\big([P_1^{(j)}]^{\HH}\scrH_1^{(j)}\big)$ towards
a polar decomposition of $\scrH_1(P_1^{(j)},\ldots,P_m^{(j)})$, which means that $(P_1^{(j)},\ldots,P_m^{(j)})$ becomes a more and more accurate approximate solution
to NPDo \eqref{eq:PBTD:KKT} for $\ell=1$, even in the absence of knowing whether the entire sequence $\{(P_1^{(j)},\ldots,P_m^{(j)})\}_{j=0}^{\infty}$ converges or not.
For the same reason, if $\sigma_{\min}(\what\scrH_{\ell}^{(j)})$ is eventually bounded below away from $0$
uniformly, then
\begin{equation}\label{eq:NPDo-always-2}
\lim_{j\to\infty}\frac {\big\|\what\scrH_{\ell}^{(j)}-P_{\ell}^{(j)}\big([P_{\ell}^{(j)}]^{\HH}\what\scrH_{\ell}^{(j)}\big)\big\|_{\F}}
                        {\big\|\what\scrH_{\ell}^{(j)}\big\|_{\F}},
\end{equation}
\end{subequations}
namely, increasingly $\what\scrH_{\ell}^{(j)}\approx P_{\ell}^{(j)}\big([P_{\ell}^{(j)}]^{\HH}\what\scrH_{\ell}^{(j)}\big)$,
regardless whether the entire sequence $\{(P_1^{(j+1)},\ldots,P_{\ell-1}^{(j+1)},P_{\ell}^{(j)},\ldots,P_m^{(j)})\}_{j=0}^{\infty}$ converges or not.

Implications in \eqref{eq:NPDo-always} by \Cref{thm:cvg4SCF4NPDo-GS:PBTD}(e) are perhaps more significant
than the conclusions on an accumulation point $(P_{*1},\ldots,P_{*m})$ in \Cref{thm:cvg4SCF4NPDo-GS:PBTD}(b,c) due to our previously
comments that maximizer tuples $(P_1,\ldots,P_m)$ of \eqref{eq:opt-PBTD} are inherently non-unique. Rather than claiming
entrywise convergence of computed approximations, these implications shows that
computed maximizer tuples progress to satisfy the KKT condition while, at the same time,
the objective moves up as guaranteed by \Cref{thm:cvg4SCF4NPDo-GS:PBTD}(a).

Next, we comment on our development so far in comparison with what in the literature for the Tucker decomposition (TD)
\cite[p.478]{koba:2009},
the tensor SVD (TSVD)~\cite{chsa:2009}, and rank-1 approximation of a tensor~\cite{zhgo:2001}.
We start by mentioning that the studies in the literature is pre-dominantly, if not exclusively, for real tensors, i.e.,
with real number entries, whereas ours is stated for complex tensors and automatically applies to real tensors without
modifications.

The Tucker decomposition (TD) has $t=1$ and all $\tau_{k_{\ell}}=(k_{\ell})$ for
$1\le\ell\le m$, and hence problem \eqref{eq:opt-PBTD} degenerates to
\begin{equation}\label{eq:opt-TD}
\max_{P_{\ell}\in\STM{k_{\ell}}{n_{\ell}}\,\,\forall\ell} \,
   \Big\{f(P_1,\ldots,P_m):=\big\|B\times_1 P_1^{\HH}\times_2 P_2^{\HH}\cdots\times_m P_m^{\HH}\big\|_{\F}^2\Big\}.
\end{equation}
What differentiates the objective here from the one in \eqref{eq:opt-PBTD:obj-tr} is that $f$ in \eqref{eq:opt-PBTD:obj-tr}
is the sum of $t$ coupled terms whereas here it has just one term which makes maximizing $f$ alternatingly
over $P_{\ell}$ and corresponding analysis much simpler.

The tensor SVD (TSVD)~\cite{chsa:2009} has $\tau_{k_{\ell}}=(1,1,\ldots,1)$ for $1\le\ell\le m$.
Chen and Saad conducted a thorough investigation on the case
with the same objective as ours and they imposed orthogonality constraints to deal with the Stiefel manifolds.
They obtained essentially the same KKT condition
in terms of $m$ equations
as ours in \eqref{eq:PBTD:KKT};
they recognized each equation becomes a polar decomposition at optimality; their eventual ALS iteration,
\cite[Algorithm~1]{chsa:2009}, is the same as our \Cref{alg:NPDo:PBTD};
They obtained \Cref{thm:cvg4SCF4NPDo-GS:PBTD}(a) and \Cref{thm:cvg4SCF4NPDo-GS:PBTD}(b,c) somewhat, but not
\Cref{thm:cvg4SCF4NPDo-GS:PBTD}(d,e), especially \Cref{thm:cvg4SCF4NPDo-GS:PBTD}(e)  that quantitatively sheds light on the speed of convergence
and implies \eqref{eq:NPDo-always}.

The rank-1 approximation of a tensor~\cite{zhgo:2001} has all $k_{\ell}=1$ (and hence $t=1$, too, as for TD).
Problem \eqref{eq:opt-PBTD} is now even simpler than TD \eqref{eq:opt-TD}. Among others,
Zhang and Golub~\cite{zhgo:2001} created \Cref{alg:NPDo:PBTD} with both Gauss-Seidel-type updating and Jacobi-type updating
without any need to involve polar decomposition. Local convergence for Gauss-Seidel-type updating is established.
More specially, Kofidis and Regalia~\cite{kore:2002} focused on the same problem for supersymmetric tensors.

\subsection*{Possible Extension}
It is interesting to observe that in proving \Cref{thm:cvg4SCF4NPDo-GS:PBTD} the actual forms of
all $\scrH_{\ell}(\cdots)$ play no roles at all. What have been used in the proofs are:
\begin{enumerate}[i)]
  \item $\scrH_{\ell}(\cdots)$ for $1\le\ell\le m$ have the properties stated in \Cref{thm:f(UVW)-Ansatz}(a,b), and
  \item $\scrH_{\ell}(\cdots)$ for $1\le\ell\le m$ and $f(\cdots)$ are continuous.
\end{enumerate}
Hence \Cref{thm:cvg4SCF4NPDo-GS:PBTD} remains valid for solving a general
\begin{equation}\label{eq:opt-on-STM2}
\max_{P_{\ell}\in\STM{k_{\ell}}{n_{\ell}}\,\,\forall\ell}\,  f(P_1,\ldots,P_m),
\end{equation}
which possesses the two properties listed above, where
$f$ is a real-valued function and
$
\scrH_{\ell}(P_1,\ldots,P_m):=\nabla_{P_{\ell}} f(P_1,\ldots,P_m)
$
is the partial Euclidean gradient of $f$ with respect to $P_{\ell}$.

\subsection{Convergence Analysis for \Cref{alg:accNPDo:PBTD}}
Next we will analyze the convergence of \Cref{alg:accNPDo:PBTD},
the LOCG-accelerated NPDo for solving \eqref{eq:opt-PBTD}. In the algorithm, it calls
\Cref{alg:NPDo:PBTD} to solve the reduced problem \eqref{eq:LOCGsub-1}. It is unrealistic to
ask for the reduced problem to be solved exactly. The question is how much accurately it should be solved
to achieve reasonable convergence behavior in the end. Making the sequence of objective values $\{f(P_1^{(j)},\ldots,P_m^{(j)})\}_{j=0}^{\infty}$ monotonically increasing is easy, but that is not enough.
At the minimum, we should
expect that any accumulation point of $\{(P_1^{(j)},P_2^{(j)},\ldots,P_m^{(j)})\}_{j=0}^{\infty}$
to satisfy the KKT condition \eqref{eq:PBTD:KKT}.
To that end, we will assume that in \Cref{alg:accNPDo:PBTD} the computed
$(P_1^{(j+1)},P_2^{(j+1)},\ldots,P_m^{(j+1)})$ at the end of $j$th loop satisfies
\begin{align}
\eta_j:=f(P_1^{(j+1)},P_2^{(j+1)},\ldots,P_m^{(j+1)})
   &- f(P_1^{(j)},P_2^{(j)},\ldots,P_m^{(j)}) \nonumber\\
   &\ge    c\;\underbrace{\max_{1\le\ell\le m}
         \Big[\|\scrH_{\ell}^{(j)}\|_{\tr}-\tr\big([P_{\ell}^{(j)}]^{\T}\scrH_{\ell}^{(j)}\big)\Big]}_{=:\eta_{*j}},
            \label{eq:accNPDo:PBTD:assume}
\end{align}
where $\scrH_{\ell}^{(j)}$ is as defined in \eqref{eq:scrH4proof-1} and $c>0$ is a constant, independent of
$j$. We now explain this is a reasonable assumption. Considering the $j$th iteration before
calling \Cref{alg:NPDo:PBTD}, we have
the reduced problem \eqref{eq:LOCGsub-1} with $S_{\ell}\,\forall\ell$ replaced with
$S_{\ell|j}\,\forall\ell$ at Line 4 of \Cref{alg:accNPDo:PBTD}. Correspondingly, for the reduced problem
we have for each $\ell$
$$
\wtd\scrH_{\ell}^{(0)}:=\wtd\scrH_{\ell}(Y_1,\ldots,Y_m)\Big|_{Y_{\ell}=Y_{\ell}^{(0)}\,\forall\ell}
  =S_{\ell|j}^{\T}\scrH_{\ell}(P_1^{(j)},P_2^{(j)},\ldots,P_m^{(j)})
  =S_{\ell|j}^{\T}\scrH_{\ell}^{(j)},
$$
and hence, if done independently on each $\ell$, one step of NPDo iteration with $\wtd\scrH_{\ell}^{(0)}$ will increase the objective
$\wtd f$, and thereby the original $f$, by
$$
\|\wtd\scrH_{\ell}^{(0)}\|_{\tr}-\tr\big([Y_{\ell}^{(0)}]^{\T}\wtd\scrH_{\ell}^{(0)}\big)
  =\|\scrH_{\ell}^{(j)}\|_{\tr}-\tr\big([P_{\ell}^{(j)}]^{\T}\scrH_{\ell}^{(j)}\big).
$$
With this in mind, the assumption \eqref{eq:accNPDo:PBTD:assume} merely requires that
calling \Cref{alg:NPDo:PBTD} each time within \Cref{alg:accNPDo:PBTD} computes
the next approximation accurately up to the point that the  objective increases
at least by a constant fraction of what one independent NPDo step on the mode achieves.

\begin{theorem}\label{thm:cvg4SCF4accNPDo-GS:PBTD}
Let the sequence $\{(P_1^{(j)},P_2^{(j)},\ldots,P_m^{(j)})\}_{j=0}^{\infty}$ be generated by \Cref{alg:accNPDo:PBTD},
and $(P_{*1},P_{*2},\ldots,P_{*m})$ an accumulation point of $\{(P_1^{(j)},P_2^{(j)},\ldots,P_m^{(j)})\}_{j=0}^{\infty}$.
Suppose that at the $j$th loop, the computed
$(P_1^{(j+1)},P_2^{(j+1)},\ldots,P_m^{(j+1)})$ with the help of \Cref{alg:NPDo:PBTD} satisfies
\eqref{eq:accNPDo:PBTD:assume}.
The following statements hold.
\begin{enumerate}[{\rm (a)}]
  \item The sequence $\{f(P_1^{(j)},\ldots,P_m^{(j)})\}_{j=0}^{\infty}$ is monotonically increasing and convergent;
  \item $(P_{*1},\ldots,P_{*m})$ satisfies the KKT condition~\eqref{eq:PBTD:KKT},
        i.e., \eqref{eq:PBTD:KKTatMAX} holds.
  \item We have $2m$ convergent series:
        for $1\le\ell\le m$
        \begin{subequations}\label{eq:cvg4SCF4accNPDo-GS:BTSVD:series}
        \begin{align}
        \sum_{j=0}^{\infty}\sigma_{\min}(\scrH_{\ell}^{(j)})\,
                         \big\|\sin\Theta\big(\cR(\scrH_{\ell}^{(j)}),\cR(P_{\ell}^{(j)})\big)\big\|_{\F}^2
                      &<\infty,    \label{eq:cvg4SCF4accNPDo-GS:BTSVD:series-1a} \\
        \sum_{j=0}^{\infty}\sigma_{\min}(\scrH_{\ell}^{(j)})\,
                  \frac {\big\|\scrH_{\ell}^{(j)}-P_{\ell}^{(j)}\big([P_{\ell}^{(j)}]^{\HH}\scrH_{\ell}^{(j)}\big)\big\|_{\F}^2}
                        {\big\|\scrH_{\ell}^{(j)}\big\|_{\F}^2}
                      &<\infty,                 \label{eq:cvg4SCF4accNPDo-GS:BTSVD:series-2a}
        \end{align}
        \end{subequations}
        where $\scrH_{\ell}^{(j)}$ is as defined in \eqref{eq:scrH4proof-1}.
\end{enumerate}
\end{theorem}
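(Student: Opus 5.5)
The plan mirrors the proof of \Cref{thm:cvg4SCF4NPDo-GS:PBTD}, with the assumption \eqref{eq:accNPDo:PBTD:assume} taking the place of the per-mode increases $\eta_{j+\ell/m}$.

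For item (a), first note that $\eta_{*j}\ge 0$. This holds because $\|\scrH_{\ell}^{(j)}\|_{\tr}\ge\Re\tr([P_{\ell}^{(j)}]^{\HH}\scrH_{\ell}^{(j)})$ by \cite[Lemma~4.2]{li:2026}, and the trace in question is real since it equals twice a sum of quadratic forms with $H_{\ell s}\succeq 0$. Then \eqref{eq:accNPDo:PBTD:assume} gives $f(P_1^{(j+1)},\ldots,P_m^{(j+1)})\ge f(P_1^{(j)},\ldots,P_m^{(j)})$, so the objective is monotone. It is also bounded above by $\|B\|_{\F}^2$ on the product of Stiefel manifolds, so it converges. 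Summing \eqref{eq:accNPDo:PBTD:assume} over $j$ yields $c\sum_j\eta_{*j}\le \lim_j f-f(P_1^{(0)},\ldots,P_m^{(0)})<\infty$. In particular $\eta_{*j}\to 0$, and for each fixed $\ell$ the series $\sum_j[\|\scrH_{\ell}^{(j)}\|_{\tr}-\tr([P_{\ell}^{(j)}]^{\HH}\scrH_{\ell}^{(j)})]$ converges.

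For item (b), take a subsequence $\bbI$ converging to $(P_{*1},\ldots,P_{*m})$. Each $\scrH_{\ell}$ is continuous, and so are the trace norm and the trace. Hence
\[
\delta_{\ell}:=\|\scrH_{*\ell}\|_{\tr}-\tr(P_{*\ell}^{\HH}\scrH_{*\ell})=\lim_{\bbI\ni j\to\infty}\Big[\|\scrH_{\ell}^{(j)}\|_{\tr}-\tr\big([P_{\ell}^{(j)}]^{\HH}\scrH_{\ell}^{(j)}\big)\Big]\le\lim_{\bbI\ni j\to\infty}\eta_{*j}=0 .
\]
By \cite[Lemma~4.2]{li:2026}, $\delta_{\ell}=0$ holds exactly when $\scrH_{*\ell}=P_{*\ell}(P_{*\ell}^{\HH}\scrH_{*\ell})$ is a polar decomposition, i.e., $\Lambda_{\ell}=P_{*\ell}^{\HH}\scrH_{*\ell}\succeq 0$. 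This is precisely \eqref{eq:PBTD:KKTatMAX}. One could instead argue by contradiction as in \Cref{thm:cvg4SCF4NPDo-GS:PBTD}(b), but the direct limit is simpler here. The reason is that the assumption is stated in terms of $\scrH_{\ell}^{(j)}$ evaluated at the current iterate, so no auxiliary accumulation points $\what P_{*\ell}$ arise and no rank condition is needed.

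For item (c), apply \cite[Theorem~4.1]{li:2026} to the matrix $\scrH_{\ell}^{(j)}$ and the orthonormal $P_{\ell}^{(j)}$, exactly as in \eqref{eq:thm:cvg4SCF4NPDo-GS-subs-1:pf-9}. Letting $Q_{\ell}^{(j)}$ be an orthonormal polar factor of $\scrH_{\ell}^{(j)}$, this gives the chain
\[
\frac{\|\scrH_{\ell}^{(j)}-P_{\ell}^{(j)}([P_{\ell}^{(j)}]^{\HH}\scrH_{\ell}^{(j)})\|_{\F}^2}{\|\scrH_{\ell}^{(j)}\|_{\F}^2}\le\big\|\sin\Theta\big(\cR(Q_{\ell}^{(j)}),\cR(P_{\ell}^{(j)})\big)\big\|_{\F}^2\le\frac{2[\|\scrH_{\ell}^{(j)}\|_{\tr}-\tr([P_{\ell}^{(j)}]^{\HH}\scrH_{\ell}^{(j)})]}{\sigma_{\min}(\scrH_{\ell}^{(j)})} .
\]
When $\sigma_{\min}(\scrH_{\ell}^{(j)})>0$ we have $\cR(Q_{\ell}^{(j)})=\cR(\scrH_{\ell}^{(j)})$, so the middle term is the one appearing in \eqref{eq:cvg4SCF4accNPDo-GS:BTSVD:series-1a}. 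When $\sigma_{\min}(\scrH_{\ell}^{(j)})=0$ the corresponding summand vanishes and there is nothing to check. Multiplying through by $\sigma_{\min}(\scrH_{\ell}^{(j)})$ and summing over $j$, the summable bound from item (a) gives both series.

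The main obstacle is making sure \cite[Theorem~4.1]{li:2026} applies with $P_{\ell}^{(j)}$ in the role of the reference matrix. Here $P_{\ell}^{(j)}$ is not itself a polar factor, unlike in \Cref{thm:cvg4SCF4NPDo-GS:PBTD}, where the comparison was with the next iterate $P_{\ell}^{(j+1)}$. I expect the lemma to be stated for an arbitrary orthonormal $P$ compared against the polar factor of $\scrH$, which is exactly what is needed here.
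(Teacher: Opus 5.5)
Your proposal is correct and follows the paper's proof. Items (a) and (c) match the paper, and \cite[Theorem~4.1]{li:2026} is indeed used there with an arbitrary orthonormal $P_{\ell}^{(j)}$, as you anticipated; your use of an explicit orthonormal polar factor $Q_{\ell}^{(j)}$, with the case $\sigma_{\min}(\scrH_{\ell}^{(j)})=0$ treated separately, is slightly more careful than the paper's direct use of $\cR(\scrH_{\ell}^{(j)})$. For (b) you pass to the limit using $\eta_{*j}\to 0$, from summability of \eqref{eq:accNPDo:PBTD:assume}, instead of the paper's contradiction argument; since both rest on the same ingredients (the assumed objective increase, continuity of $\scrH_{\ell}$, and \cite[Lemma~4.2]{li:2026}), this is a streamlining rather than a different route.
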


\begin{proof}
Item (a) immediately follows from \eqref{eq:accNPDo:PBTD:assume}.
Essentially the proof of \Cref{thm:cvg4SCF4NPDo-GS:PBTD}(b) works, too, for item (b) here.
Let
$$
\delta=\max_{1\le\ell\le m}
         \big[\|\scrH_{*\ell}\|_{\tr}-\tr\big(P_{*\ell}^{\HH}\scrH_{*\ell}\big)\big],
$$
where $\scrH_{*\ell}$ is as defined in \eqref{eq:scrHell-star}.
If at least of the equations in \eqref{eq:PBTD:KKTatMAX} does not hold, then $\delta>0$. By continuity,
there is an $j_0\in\bbI$ such that
\begin{subequations}\label{eq:cvg4SCF4accNPDo-GS:BTSVD:pf-7}
\begin{gather}
\Big|\|\scrH_{*\ell}\|_{\tr}-\|\scrH_{\ell}^{(j_0)}\|_{\tr}\Big| <\frac {\delta}3, \label{eq:thm:cvg4SCF4accNPDo-GS:pf-7a}\\
\Big|\tr\big((P_{\ell}^{(j_0)})^{\HH}\scrH_{\ell}^{(j_0)}\big)-\tr\big(P_{*\ell}^{\HH}\scrH_{*\ell}\big)\Big|
         <\frac {\delta}3,     \label{eq:thm:cvg4SCF4accNPDo-GS:pf-7b}\\
f(P_{*1},\ldots,P_{*m})-\frac {c\delta}{6}<f(P_1^{(j_0)},\ldots,P_m^{(j_0)})\le f(P_{*1},\ldots,P_{*m}).
      \label{eq:thm:cvg4SCF4accNPDo-GS-subs-1:pf-7e}
\end{gather}
\end{subequations}
By \eqref{eq:accNPDo:PBTD:assume}, we have
\begin{align*}
f(P_1^{(j_0+1)},\ldots,P_m^{(j_0+1)})&\ge f(P_1^{(j_0)},\ldots,P_m^{(j_0)}) \\
  & \quad    +c\max_{1\le\ell\le m}
       \Big[\|\scrH_{\ell}^{(j_0)}\|_{\tr}-\tr\big([P_{\ell}^{(j_0)}]^{\HH}\scrH_{\ell}^{(j_0)}\big)\Big]  \\
  &>f(P_{*1},\ldots,P_{*m})-\frac {c\delta}{6} \\
  & \quad     +c\max_{1\le\ell\le m}\Big[\|\scrH_{*\ell}\|_{\tr}-\frac {\delta}3-\tr(P_{*\ell}^{\HH}\scrH_{*\ell})
                -\frac {\delta}3\Big] \\
  &=f(P_{*1},\ldots,P_{*m})+\frac {c\delta}6>f(P_{*1},\ldots,P_{*m}),
\end{align*}
contradicting $f(P_1^{(i)},\ldots,P_m^{(i)})\le\lim_{j\to\infty}f(P_1^{(j)},\ldots,P_m^{(j)})= f(P_{*1},\ldots,P_{*m})$
for all $1\le i<\infty$.
This completes the proof of item~(b).

Finally, instead of \eqref{eq:thm:cvg4SCF4NPDo-GS-subs-1:pf-9},
by \cite[Theorem~4.1]{li:2026}, we have for $1\le\ell\le m$
\begin{subequations}\label{eq:thm:cvg4SCF4accNPDo-GS-subs-1:pf-9}
\begin{align}
\frac {\big\|\scrH_{\ell}^{(j)}-P_{\ell}^{(j)}\big([P_{\ell}^{(j)}]^{\HH}\scrH_{\ell}^{(j)}\big)\big\|_{\F}^2}
                        {\big\|\scrH_{\ell}^{(j)}\big\|_{\F}^2}
    &\le\big\|\sin\Theta\big(\cR(\scrH_{\ell}^{(j)}),\cR(P_{\ell}^{(j)})\big)\big\|_{\F}^2
        \label{eq:thm:cvg4SCF4accNPDo-GS-subs-1:pf-9a} \\
    &\le\frac {2\Big[\|\scrH_{\ell}^{(j)}\|_{\tr}-\tr\big([P_{\ell}^{(j)}]^{\T}\scrH_{\ell}^{(j)}\big)\Big]}
              {\sigma_{\min}(\scrH_{\ell}^{(j)})} \nonumber \\
    &\le\frac {2\eta_{*j}}{\sigma_{\min}(\scrH_{\ell}^{(j)})}. \label{eq:thm:cvg4SCF4accNPDo-GS-subs-1:pf-9b}
\end{align}
\end{subequations}
Hence \eqref{eq:cvg4SCF4accNPDo-GS:BTSVD:series-2a} is a consequence of \eqref{eq:cvg4SCF4accNPDo-GS:BTSVD:series-1a}.
To see \eqref{eq:cvg4SCF4accNPDo-GS:BTSVD:series-1a}, we combine
$$
c\sum_{j=0}^{\infty}\eta_{*j}
  \le\sum_{j=0}^{\infty}\eta_j
     \le\lim_{j\to\infty} f(P_1^{(j+1)},\ldots,P_m^{(j+1)})-f(P_1^{(0)},\ldots,P_m^{(0)})
     <\infty,
$$
with \eqref{eq:thm:cvg4SCF4accNPDo-GS-subs-1:pf-9}.
The proof is completed.
\end{proof}

It is noted the difference between \eqref{eq:cvg4SCF4NPDo-GS:BTSVD:series-1a} and \eqref{eq:cvg4SCF4accNPDo-GS:BTSVD:series-1a}
lies in the appearance of $\cR(P_{\ell}^{(j+1)})$ in the former but $\cR(\scrH_{\ell}^{(j)})$ in the latter. The reason is
no longer $P_{\ell}^{(j+1)}$ is taken as the orthonormal polar factor of $\scrH_{\ell}^{(j)}$
by \Cref{alg:accNPDo:PBTD}.

\section{Numerical Experiments}\label{sec:egs}
In this section, we will report some numerical tests on randomly generated tensors $B$
that are approximately block-diagonal, in the sense of \eqref{eq:BTD}. As $m>3$ introduces no essentially difference in theory and experiments, except perhaps more computational work, we will use $3$-mode tensor in our experiments, i.e., $m=3$.
Specifically, given $n_{\ell}$ and $k_{\ell}$ and its partition $\tau_{k_{\ell}}$ in \eqref{eq:tau(ki)-n}
for $1\le\ell\le m=3$, we first generate, in MATLAB,
dependent on real or complex tensors,
\begin{alignat*}{2}
T&={\tt zeros}([n_1,n_2,n_3]), && \\
T_{(1:k_1,1:k_2,1:k_3)}&=\diag(T_{111},\ldots, T_{ttt}), && \\
E&={\tt randn}([n_1,n_2,n_3])\quad
             &&\mbox{or}\quad {\tt randn}([n_1,n_2,n_3])+{\tt 1i*randn}([n_1,n_2,n_3]),
\end{alignat*}
where, for $1\le i\le t$,
$$
T_{iii}={\tt randn}([k_{1i},k_{2i},k_{3i}])\quad
             \mbox{or}\quad{\tt randn}([k_{1i},k_{2i},k_{3i}])+{\tt 1i*randn}([k_{1i},k_{2i},k_{3i}]), \\
$$
and, for $1\le\ell\le m=3$,
$$
Q_{\ell}={\tt orth}({\tt randn}(n_{\ell})),
\quad\mbox{or}\quad
  {\tt orth}({\tt randn}(n_{\ell})+{\tt 1i*randn}(n_{\ell})),
$$
and finally
\begin{equation}\label{eq:random-B}
B=(T+\eta\,E)\times_1 Q_1\times_2 Q_2\times_3 Q_3,
\end{equation}
where parameter $\eta$ controls how close the generated $B$ to be exactly block-diagonalizable. In particular, for $\eta=0$,
$B=T\times_1 P_1\times_2 P_2\times_3 P_3$ exactly for some block-diagonal $T\in\bbC^{k_1\times k_2\times k_3}$ and
$P_{\ell}\in\STM{k_{\ell}}{n_{\ell}}$ for $1\le\ell\le m=3$.

All experiments are carried out within the MATLAB environment (MATLAB R2022) on a Dell Precision 3660 desktop with an
Intel i9 processor (3200 Mhz), 32 GB memory, running Microsoft Windows 11 Enterprise.

\subsection{Convergence Behavior}
For each of the four cases, real or complex and principal tensor SVD or block-diagonalization, we generate
one set $\{T, E, Q_1, Q_2, Q_3\}$ and construct different tensors according to \eqref{eq:random-B} as $\eta$
varies from $2^{-8}$ down to $2^{-3}$.
Their dimensions are
$$
\mbox{real:}\quad [n_1,n_2,n_3]=[600,   550,   500],
\quad\mbox{and}\,\,\mbox{complex:}\quad [n_1,n_2,n_3]=[480,   440,   400].
$$
These size parameters almost reach the limit of MATLAB's {\tt mat} file data format for saving.
Without saving a tensor after its generation, we can go for larger sizes as we will do in the next subsection.
In the case of making randomly generated $B$ approximately diagonalizable, we use $k=10$,
while for the case of making $B$ approximately block-diagonalizable, we let $t=4$ and
$[k_{1j},k_{2j},k_{3j}]=[2,3,2]$ in \eqref{eq:tau(ki)-n}.

\begin{figure}[t]
{\centering
\begin{tabular}{ccc}
    \resizebox*{0.31\textwidth}{0.17\textheight}{\includegraphics{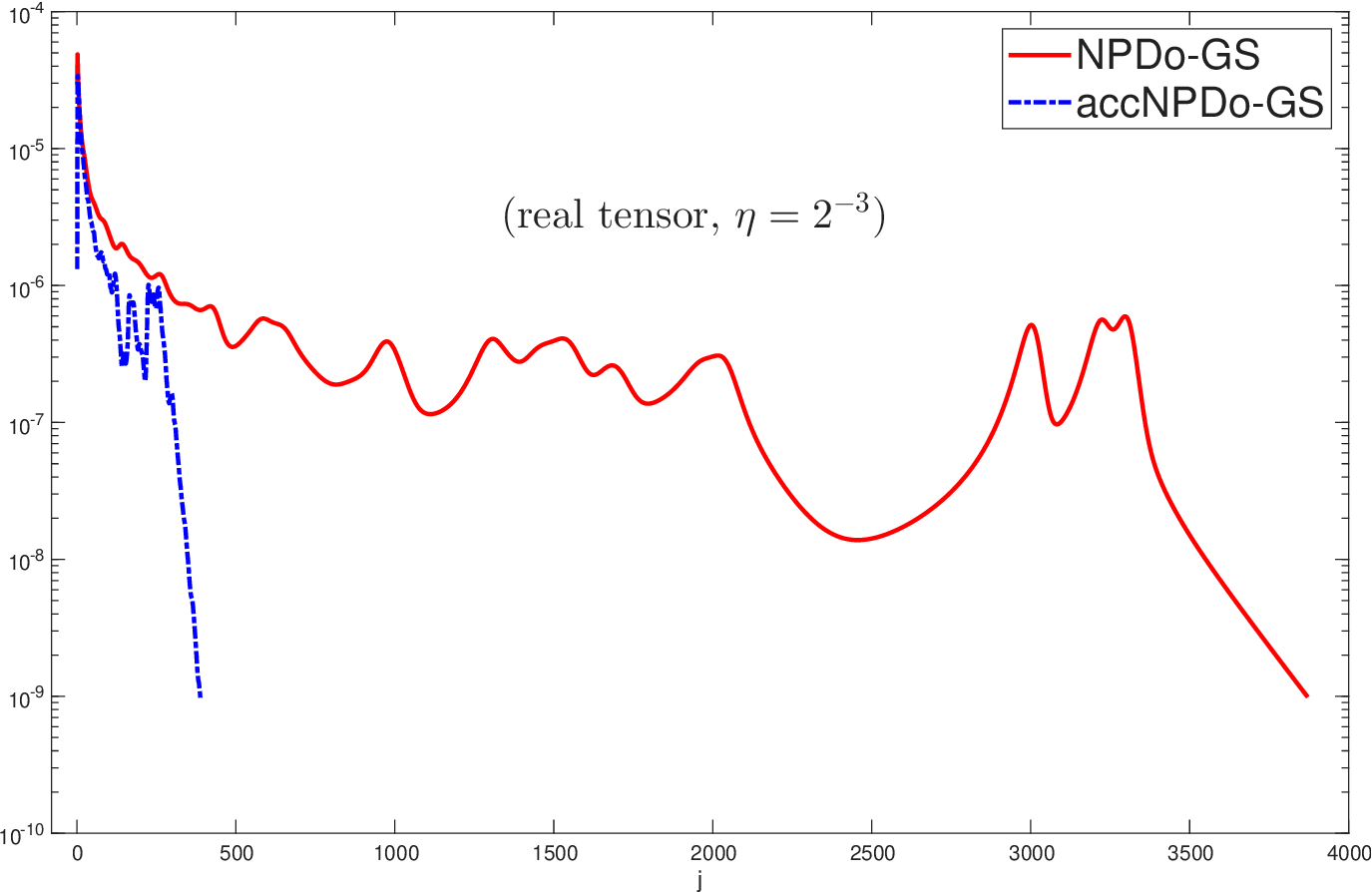}}
  & \resizebox*{0.31\textwidth}{0.17\textheight}{\includegraphics{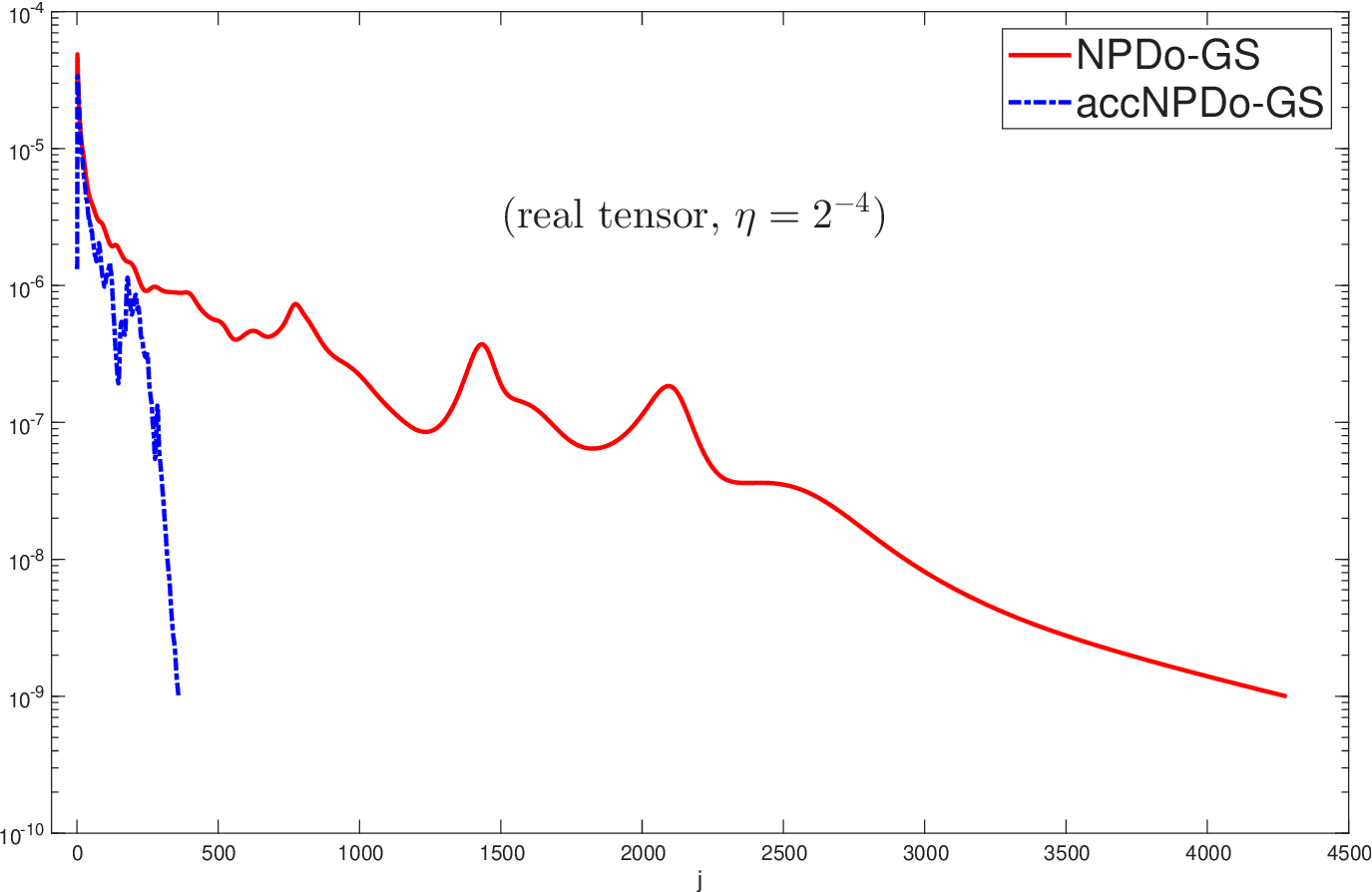}}
  & \resizebox*{0.31\textwidth}{0.17\textheight}{\includegraphics{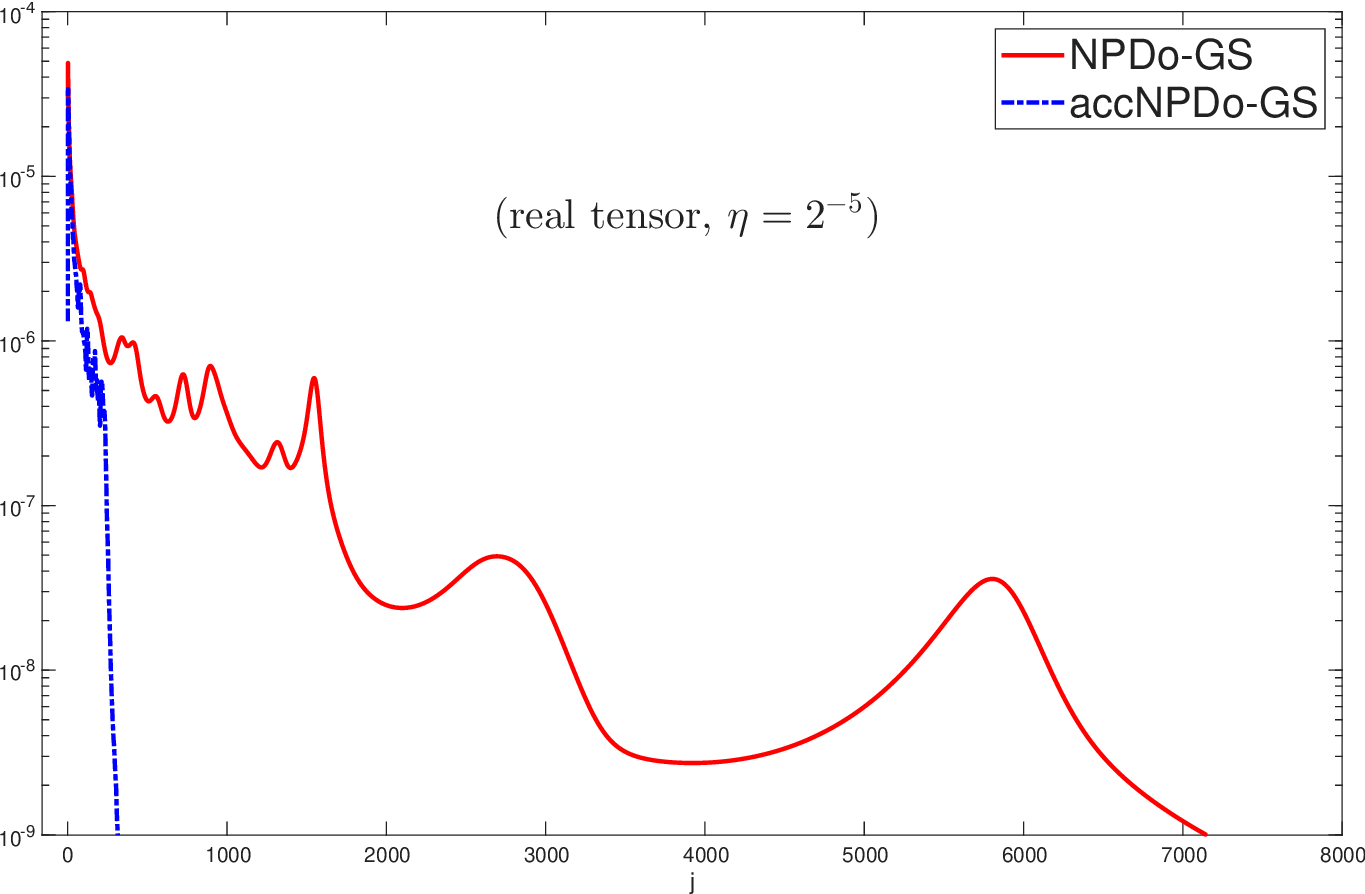}} \\
    \resizebox*{0.31\textwidth}{0.17\textheight}{\includegraphics{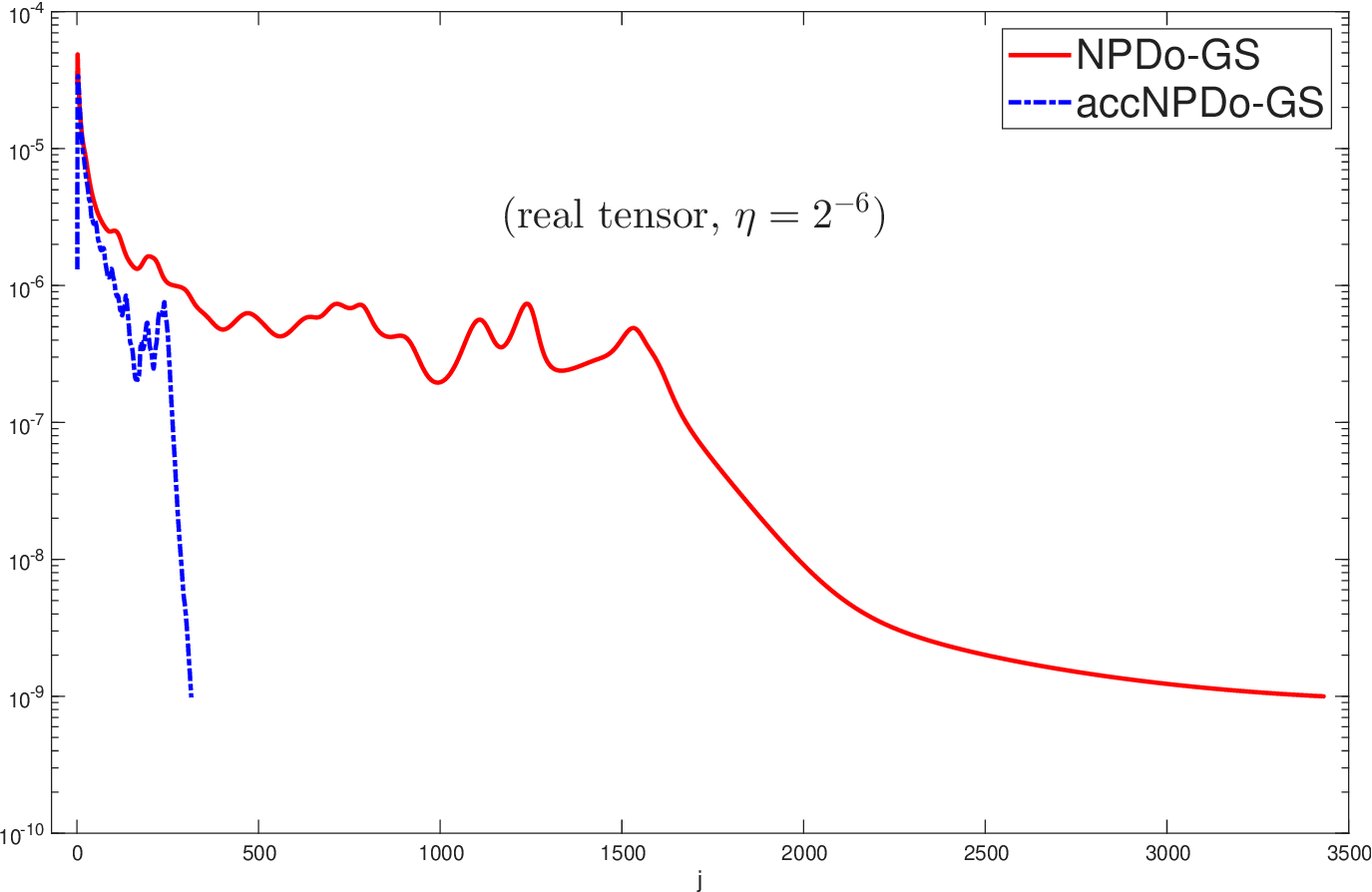}}
  & \resizebox*{0.31\textwidth}{0.17\textheight}{\includegraphics{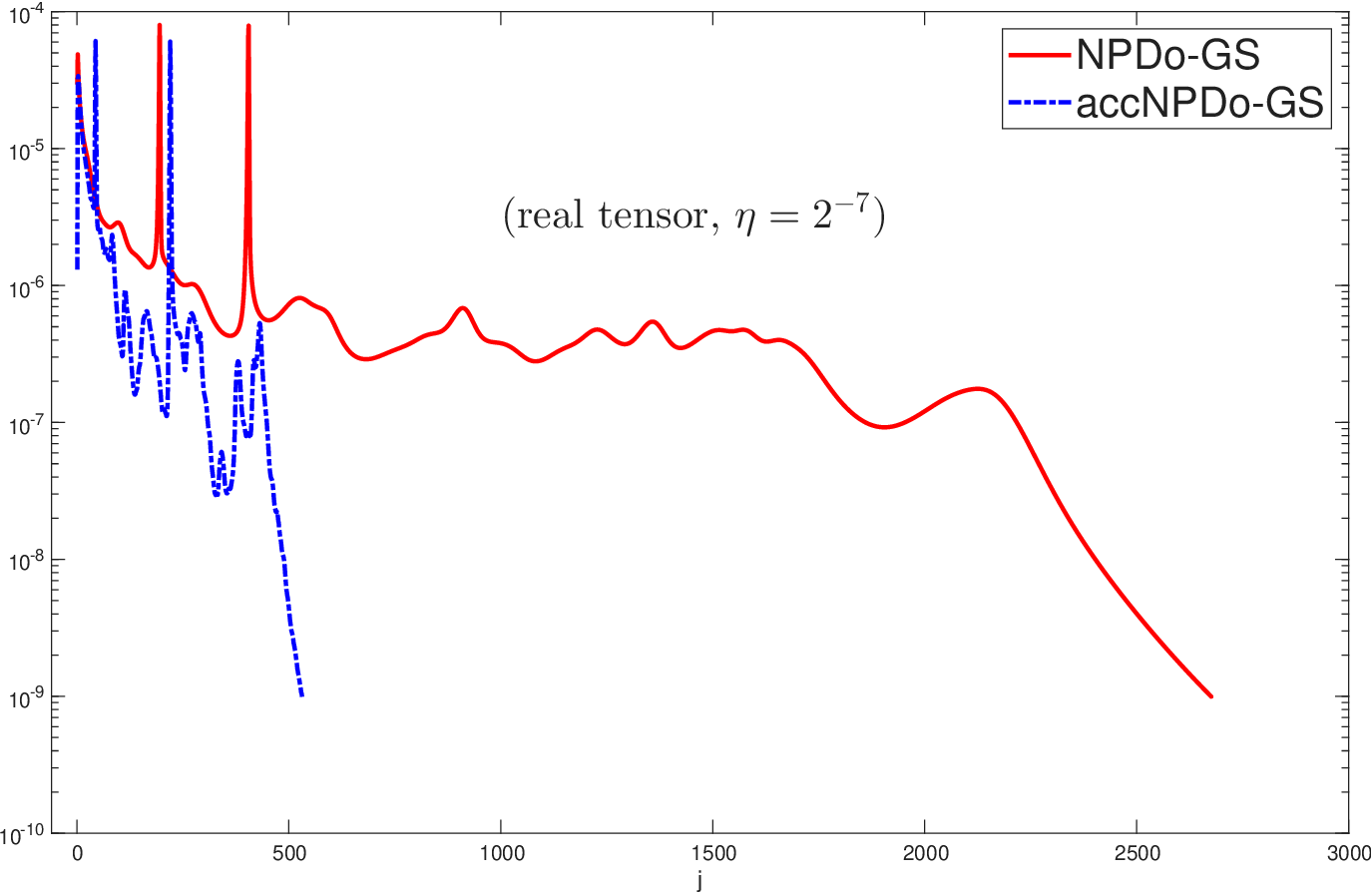}}
  & \resizebox*{0.31\textwidth}{0.17\textheight}{\includegraphics{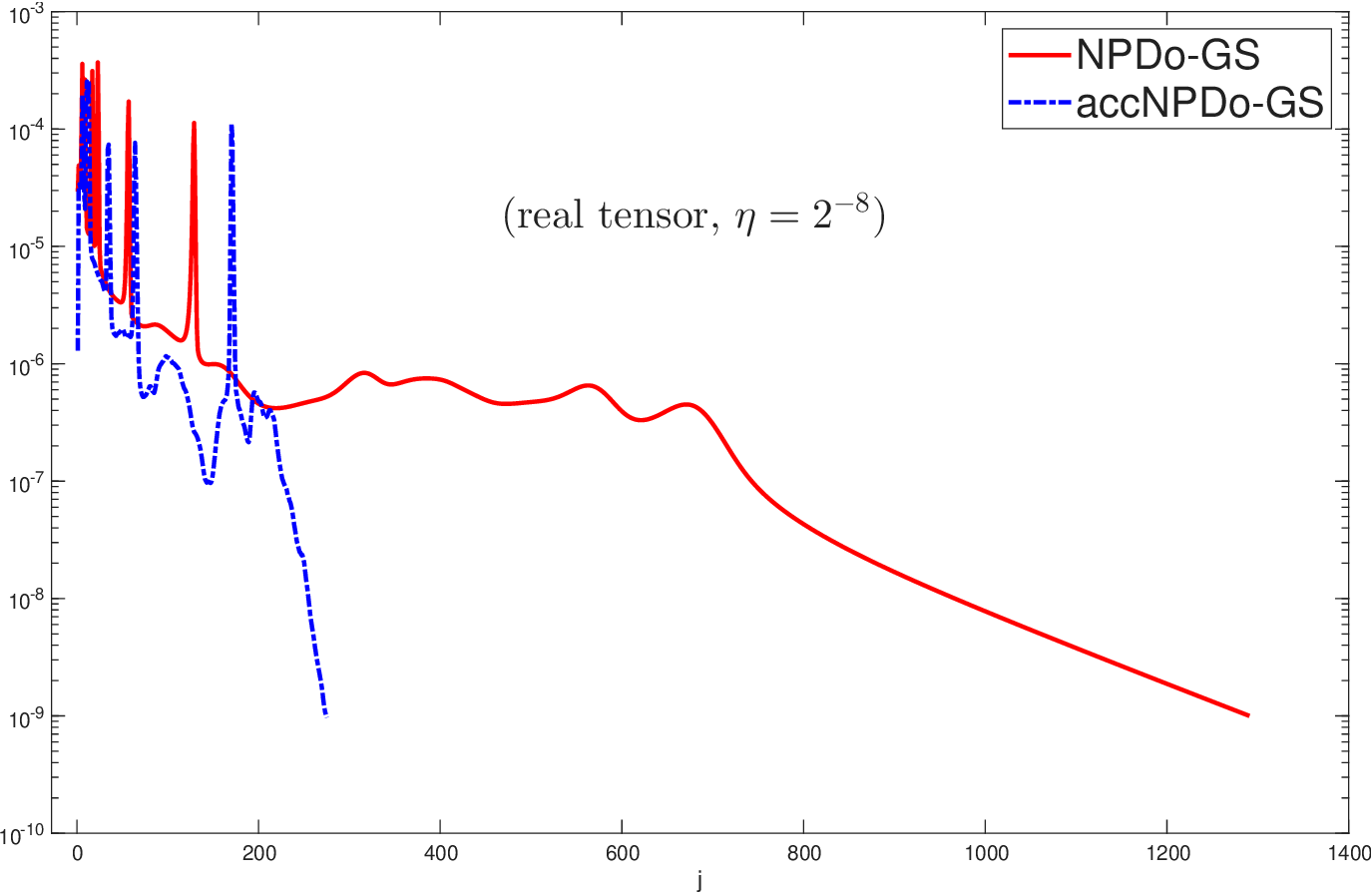}} \\ 
    \resizebox*{0.31\textwidth}{0.17\textheight}{\includegraphics{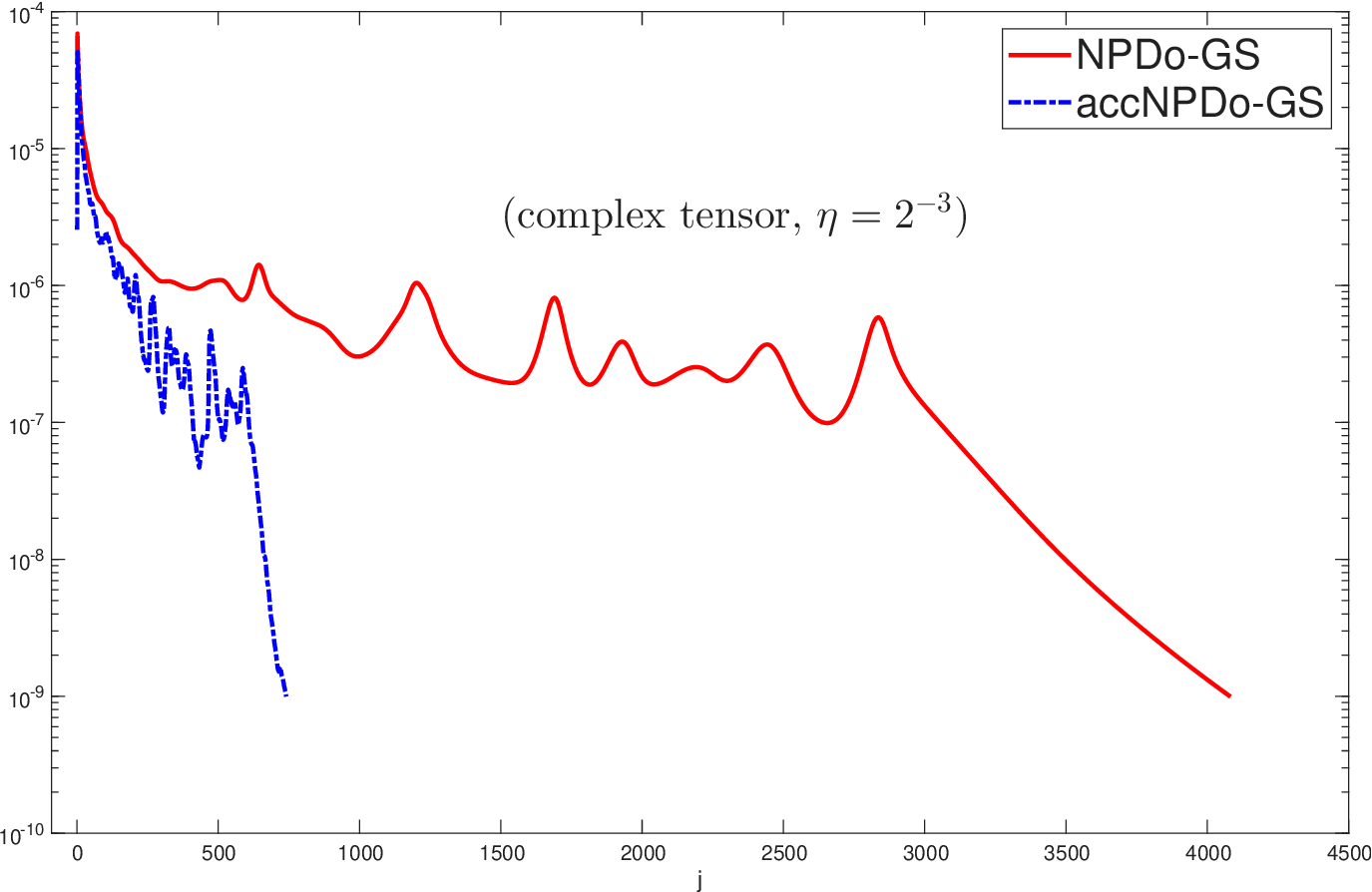}}
  & \resizebox*{0.31\textwidth}{0.17\textheight}{\includegraphics{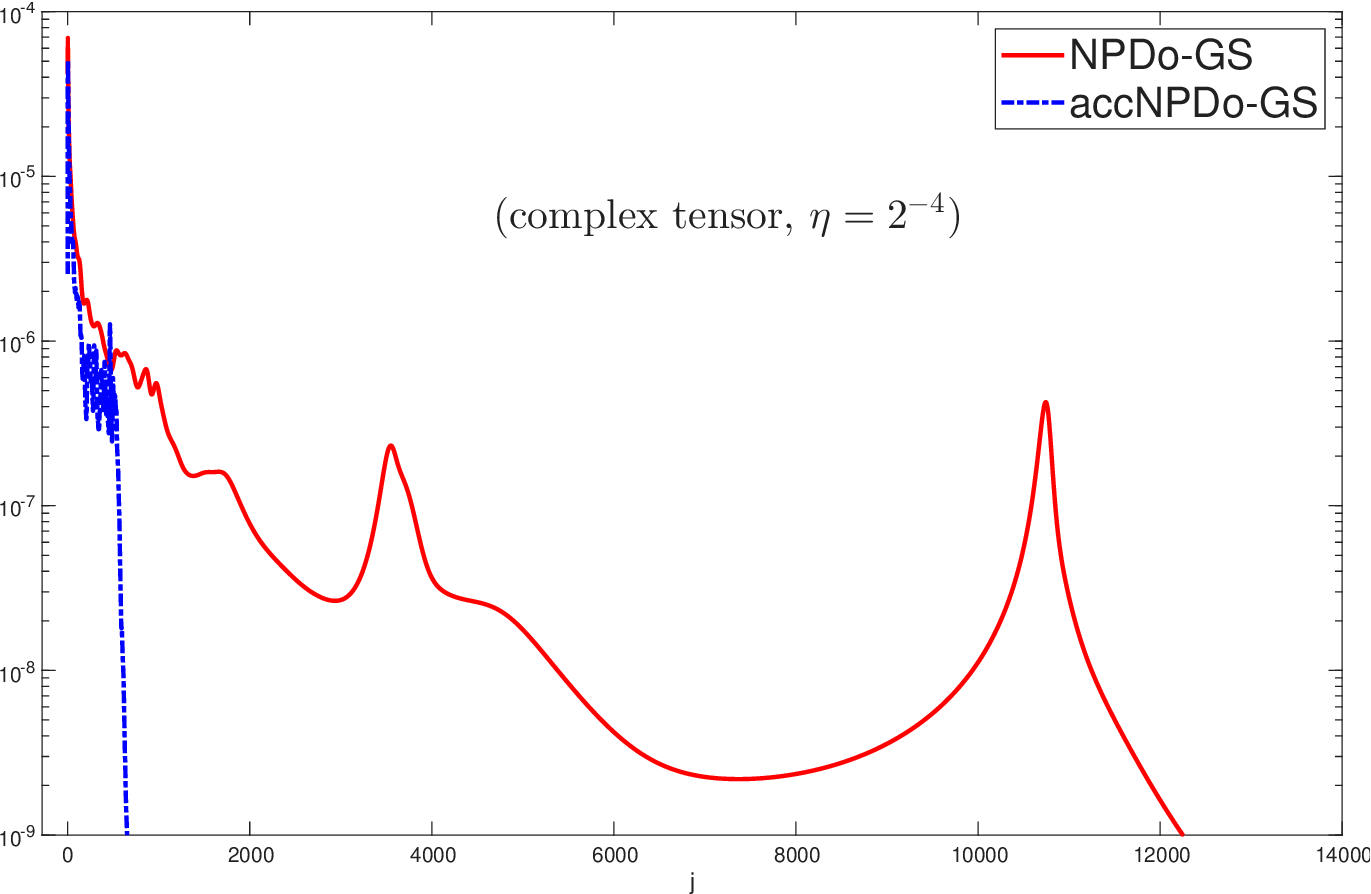}}
  & \resizebox*{0.31\textwidth}{0.17\textheight}{\includegraphics{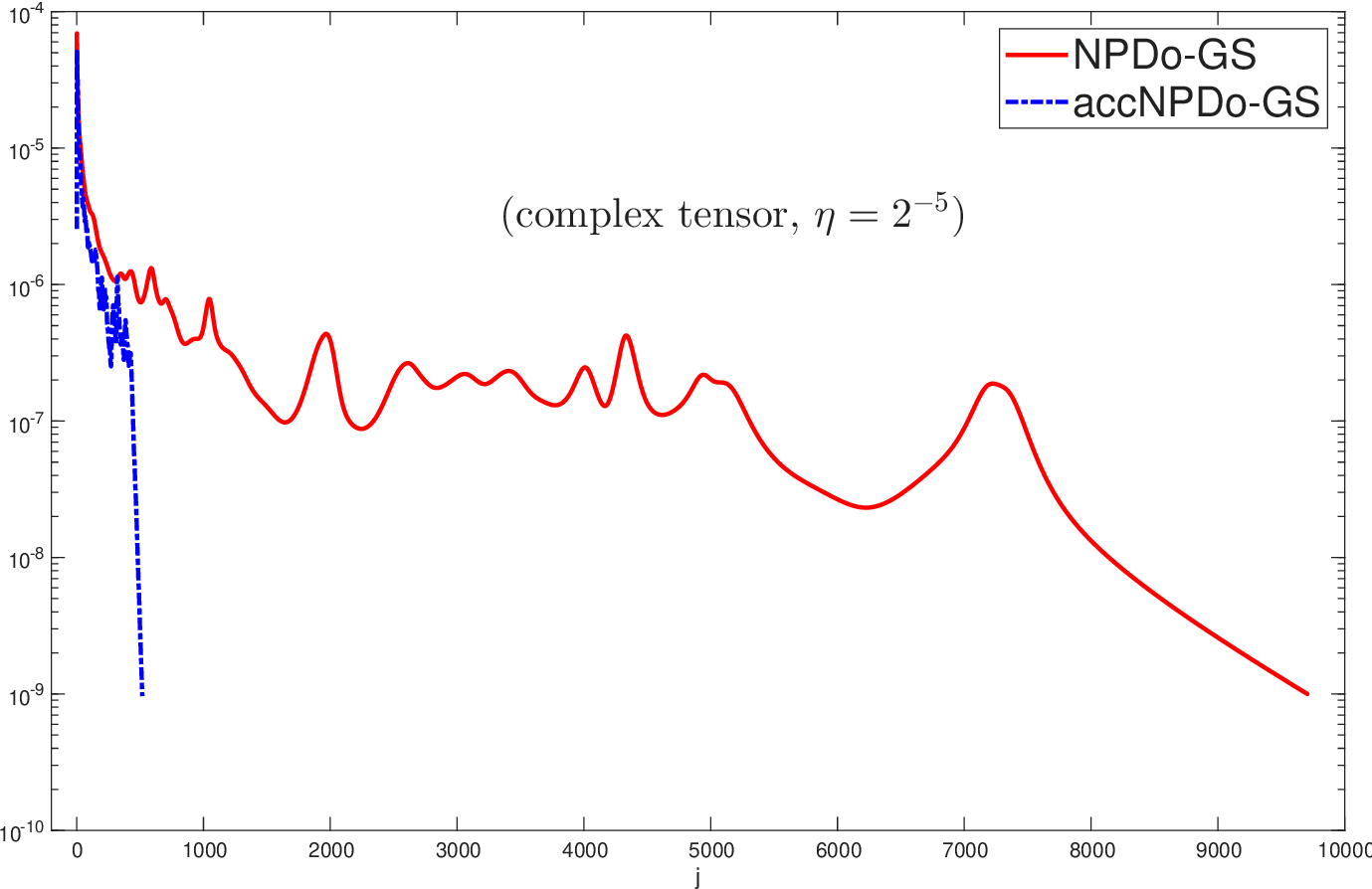}} \\
    \resizebox*{0.31\textwidth}{0.17\textheight}{\includegraphics{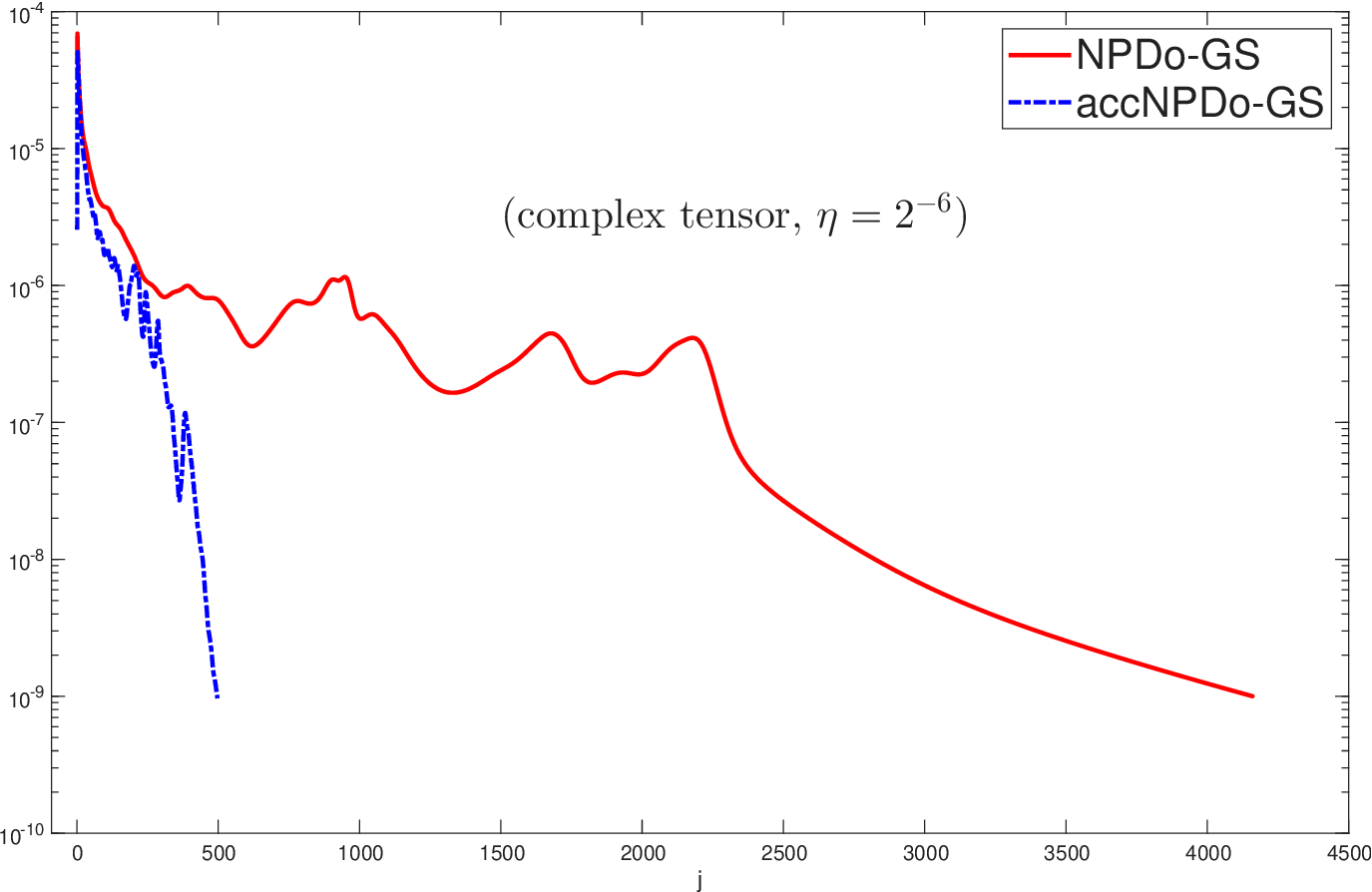}}
  & \resizebox*{0.31\textwidth}{0.17\textheight}{\includegraphics{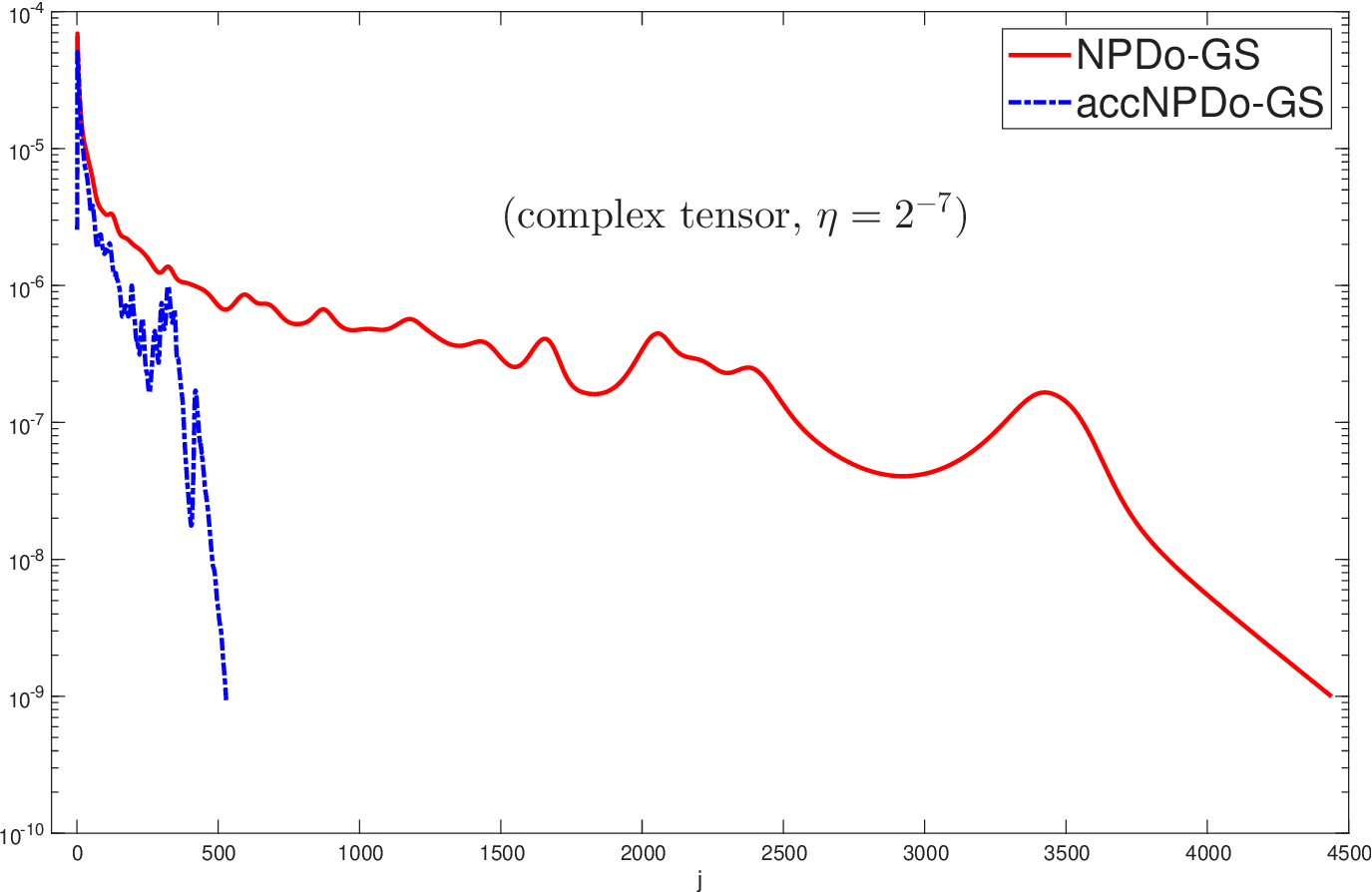}}
  & \resizebox*{0.31\textwidth}{0.17\textheight}{\includegraphics{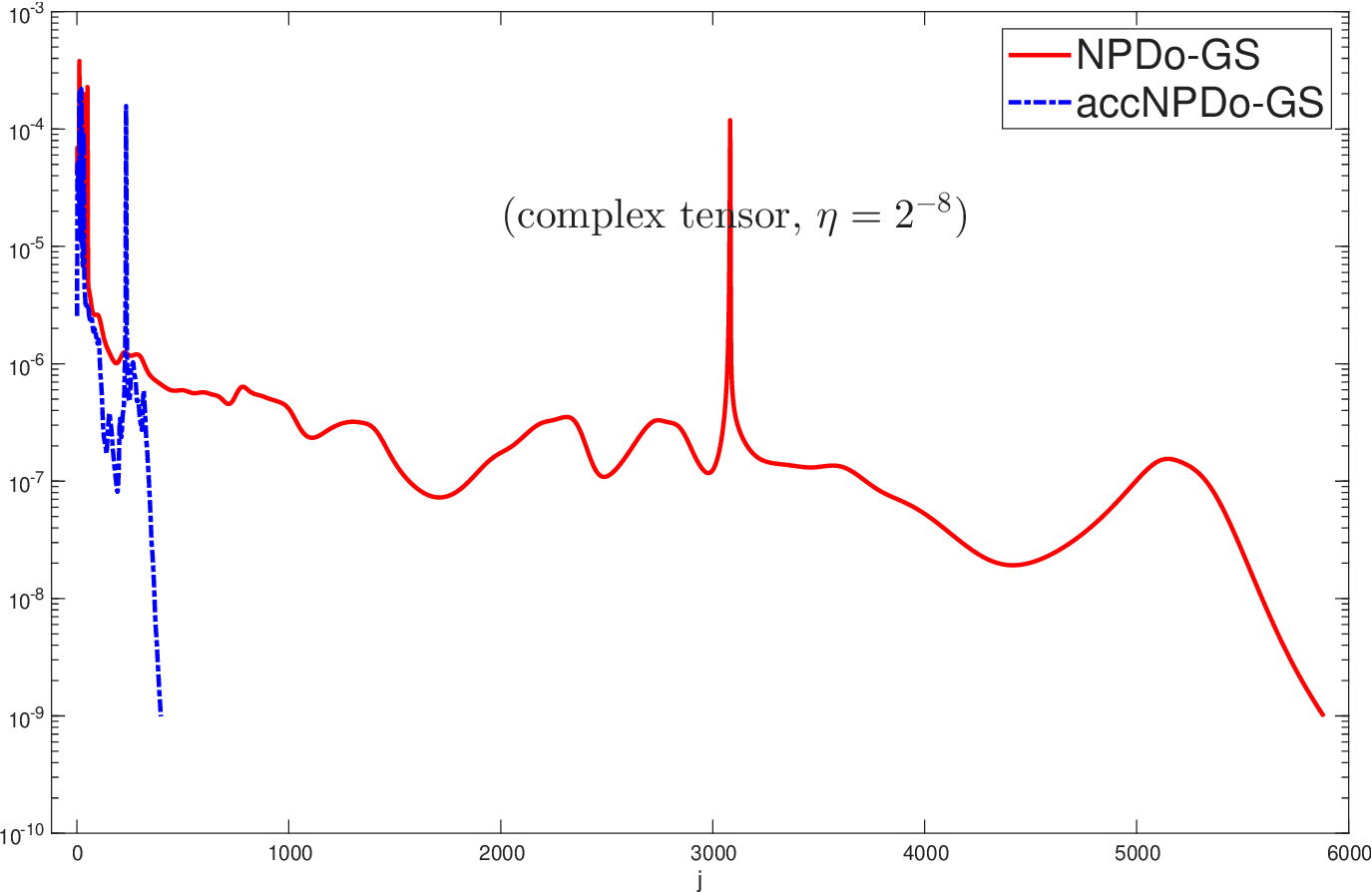}}
\end{tabular}\par
}
\vspace{-0.15 cm}
\caption{\small Principal tensor SVD (\ptsvd) by the NPDo approach -- convergence  in terms of KKT residual $\tilde\epsilon_{\KKT,j}$ as defined in \eqref{eq:stop-BTSVD'} on
  randomly generated tensors according to \eqref{eq:random-B} with varying $\eta$ from $2^{-8}$ down to $2^{-3}$:
  top two rows are for real $B\in\bbR^{500\times 550\times 600}$ and bottom two rows for complex
  $B\in\bbC^{400\times 440\times 480}$.
  }
\label{fig:diagTS-behavior-cvg}
\end{figure}

\begin{figure}[t]
{\centering
\begin{tabular}{ccc}
    \resizebox*{0.31\textwidth}{0.17\textheight}{\includegraphics{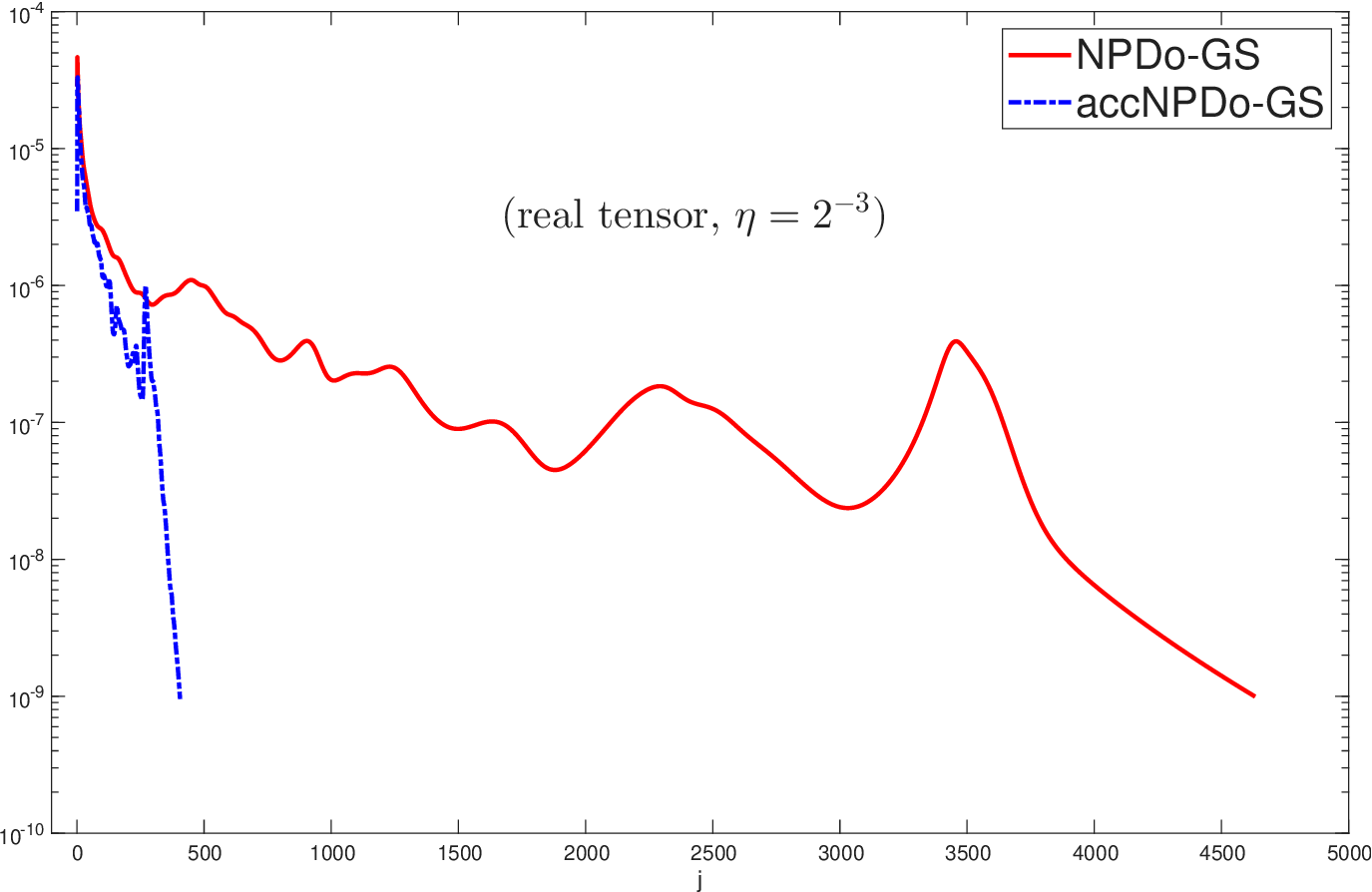}}
  & \resizebox*{0.31\textwidth}{0.17\textheight}{\includegraphics{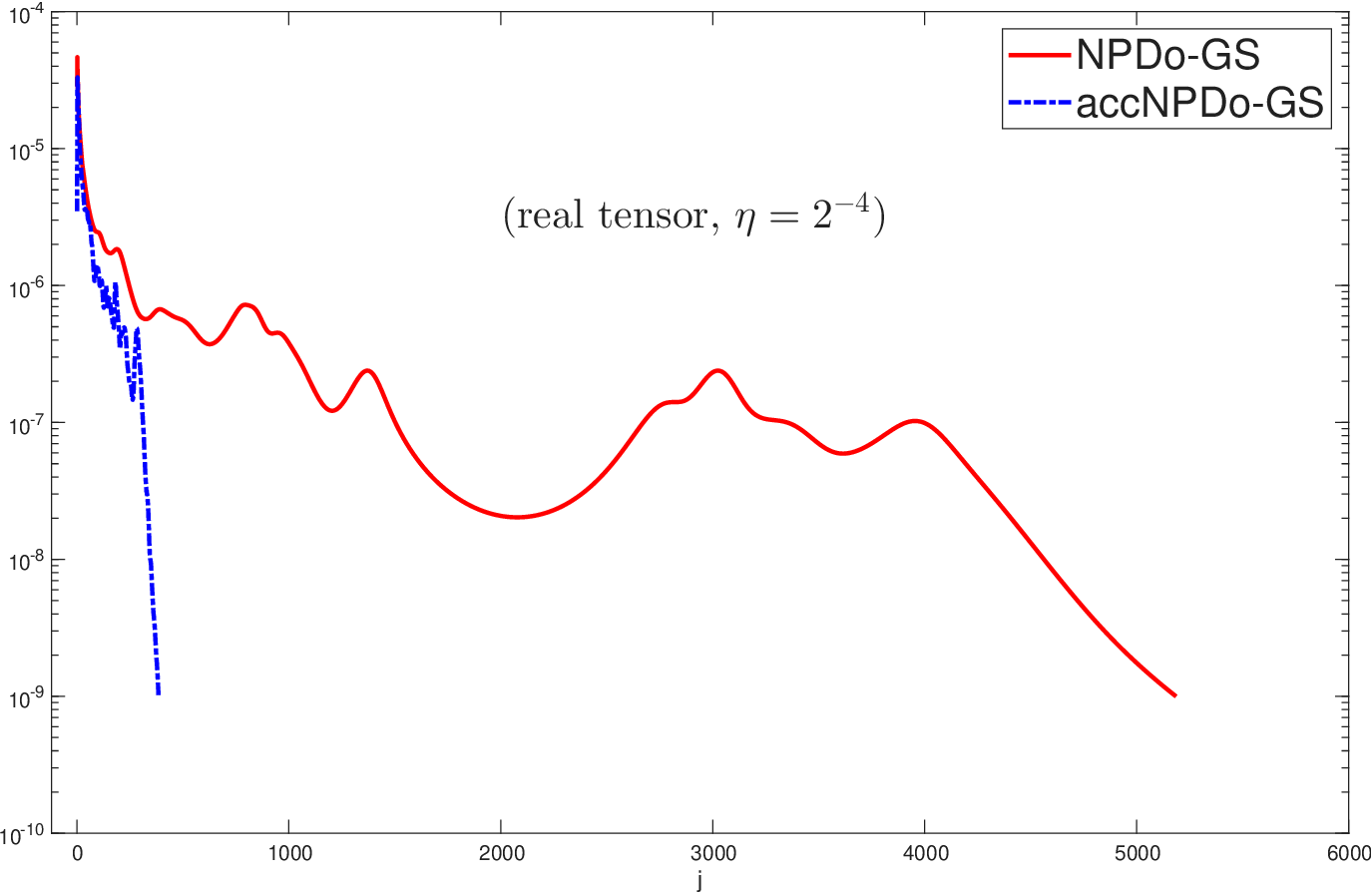}}
  & \resizebox*{0.31\textwidth}{0.17\textheight}{\includegraphics{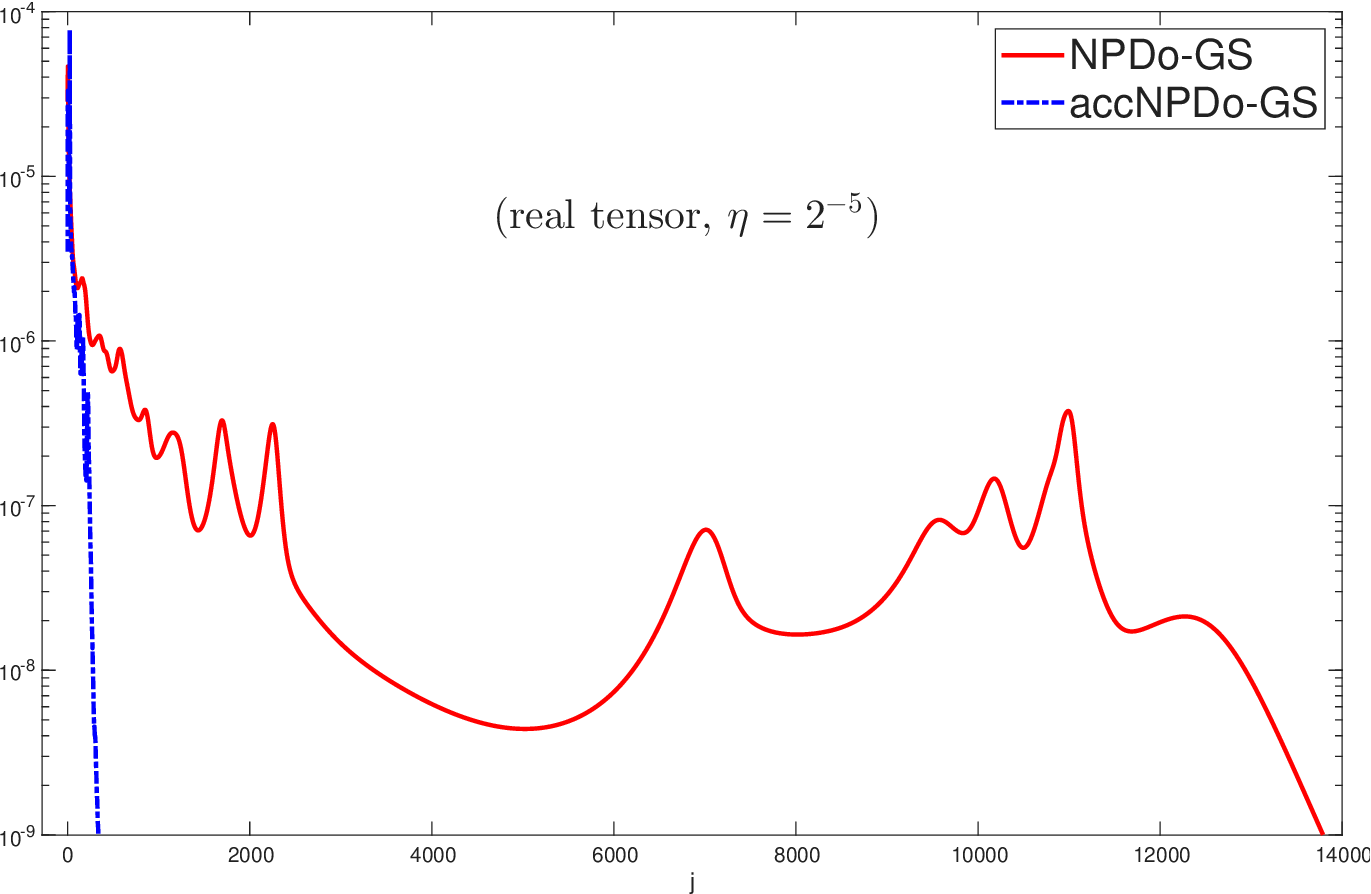}} \\
    \resizebox*{0.31\textwidth}{0.17\textheight}{\includegraphics{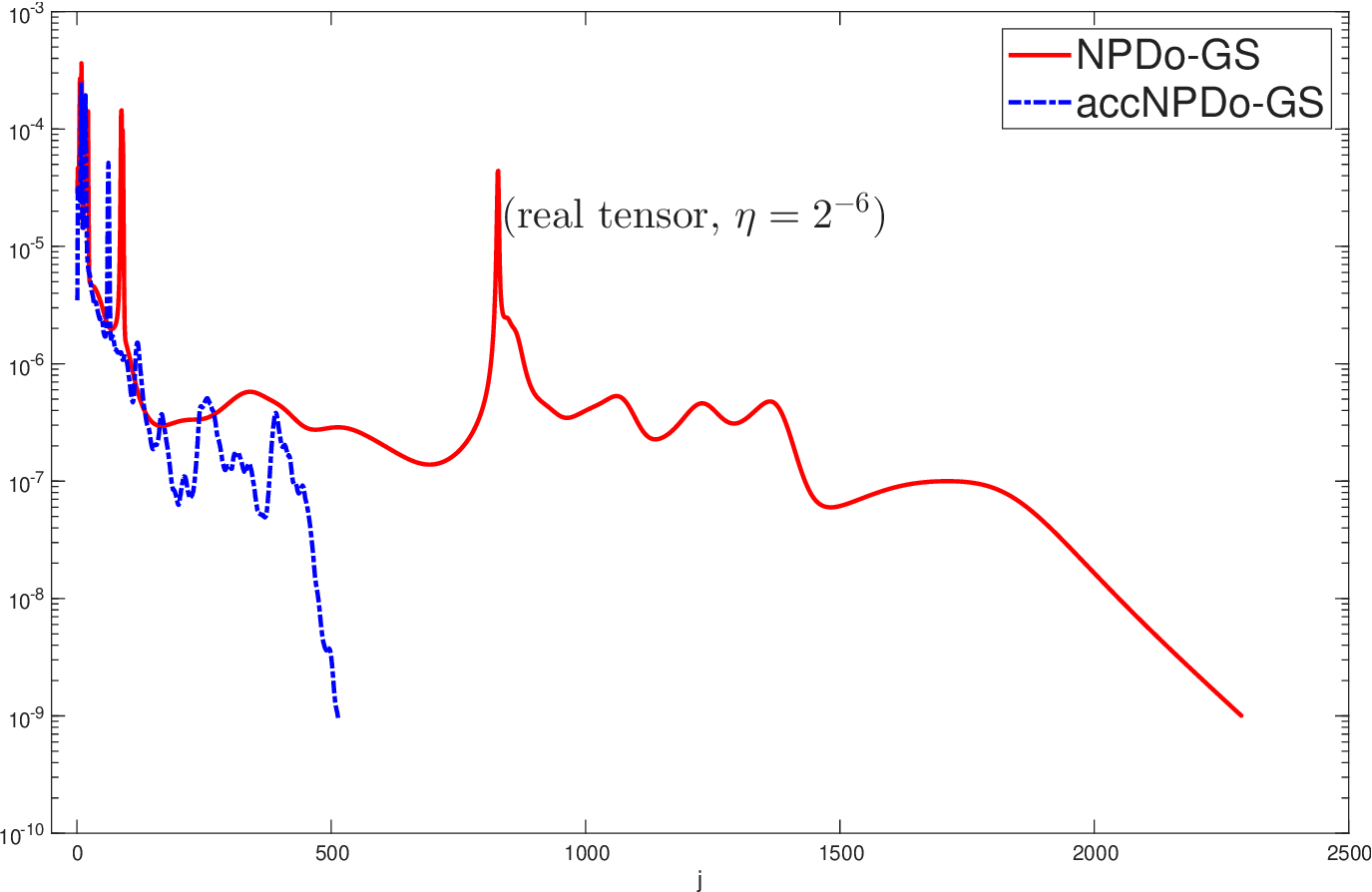}}
  & \resizebox*{0.31\textwidth}{0.17\textheight}{\includegraphics{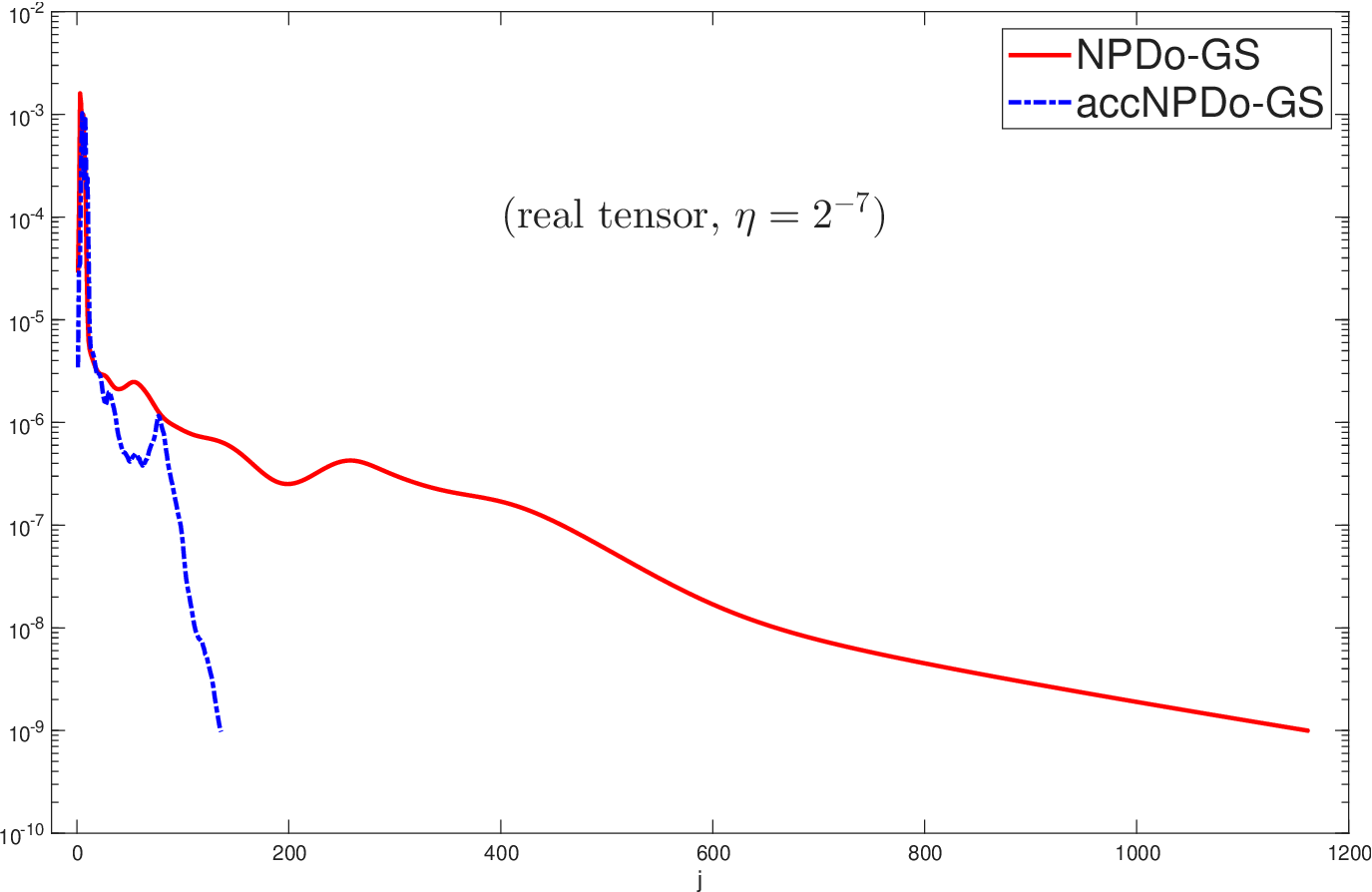}}
  & \resizebox*{0.31\textwidth}{0.17\textheight}{\includegraphics{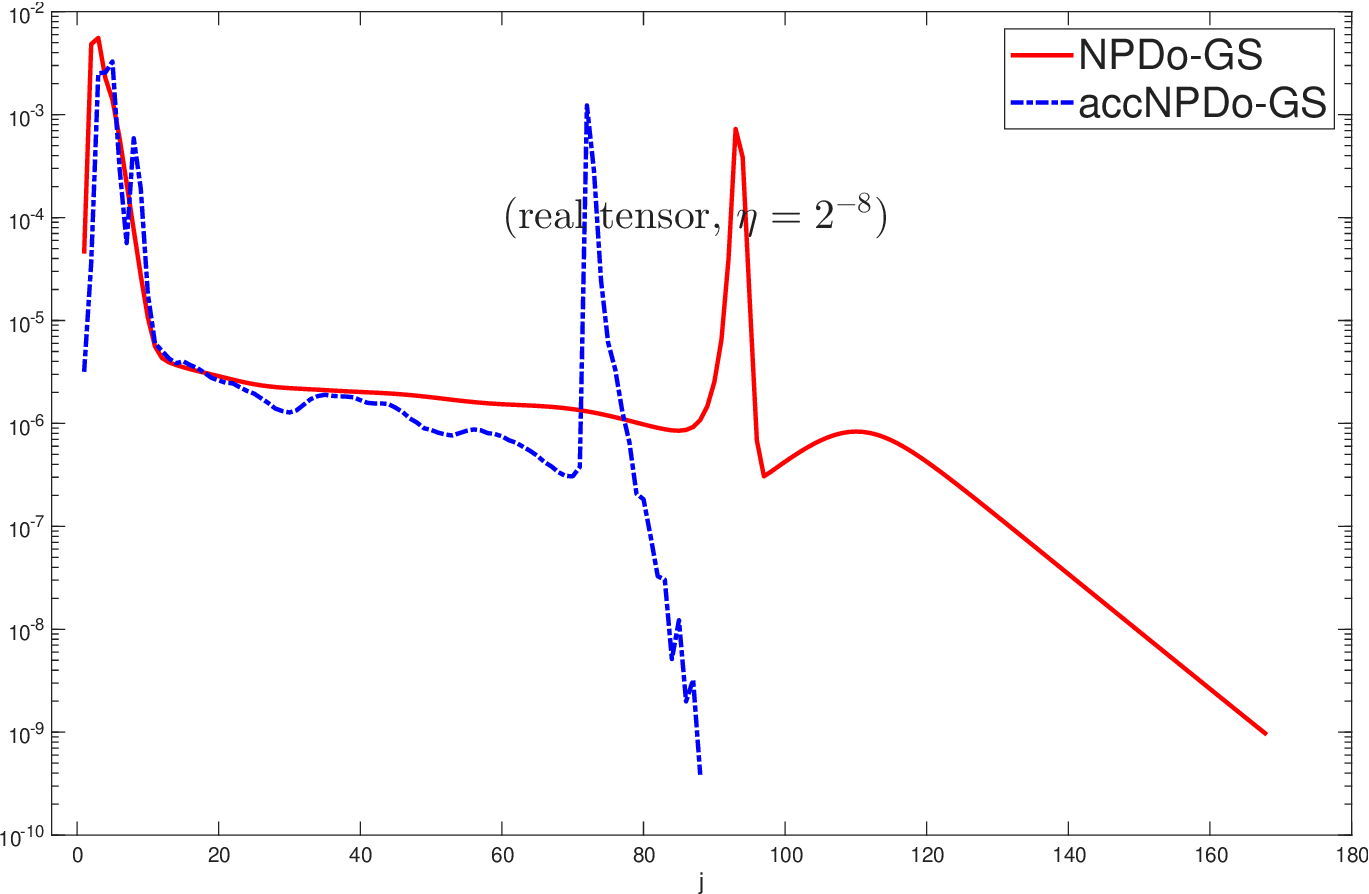}} \\ \hline
    \resizebox*{0.31\textwidth}{0.17\textheight}{\includegraphics{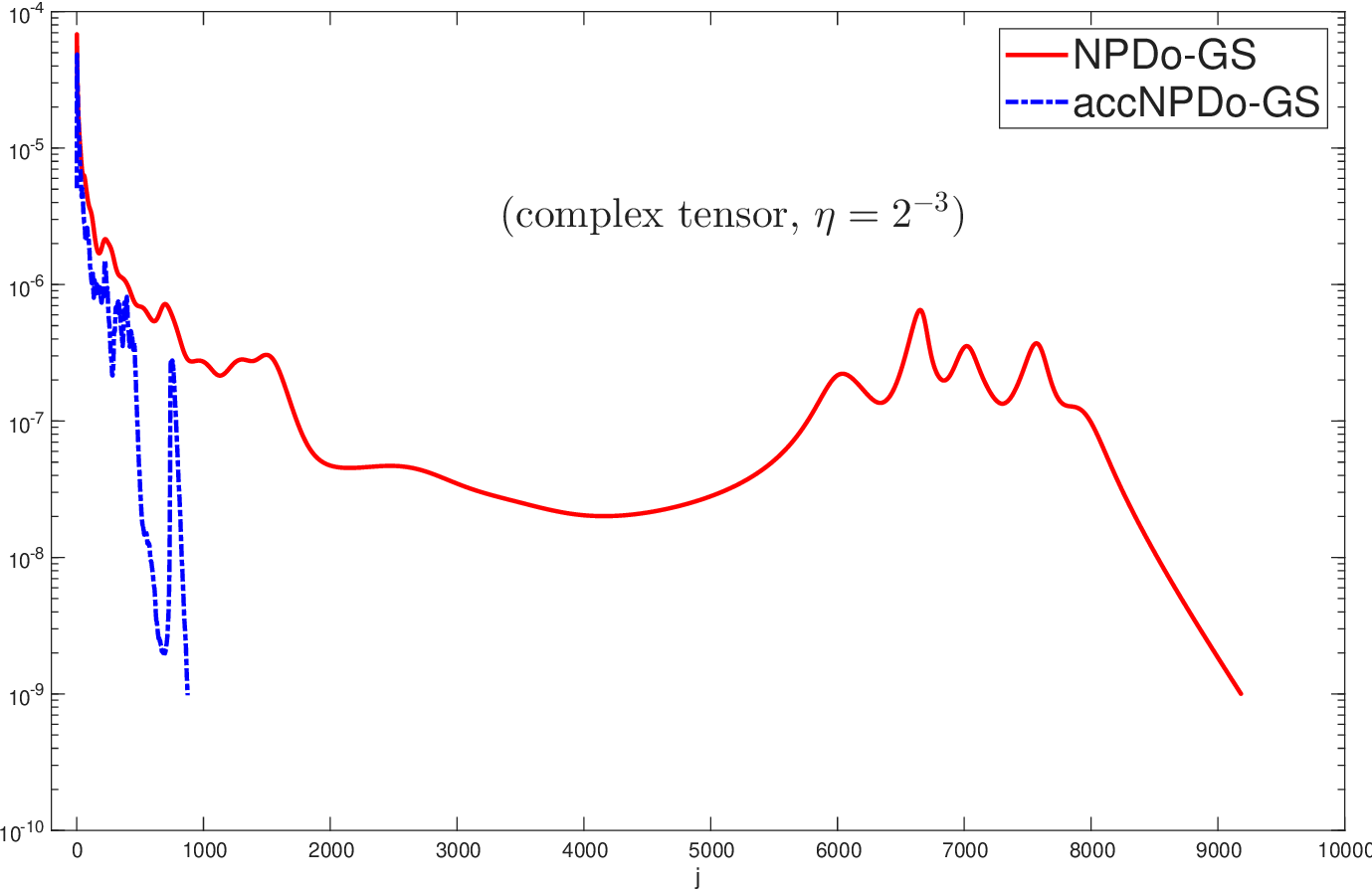}}
  & \resizebox*{0.31\textwidth}{0.17\textheight}{\includegraphics{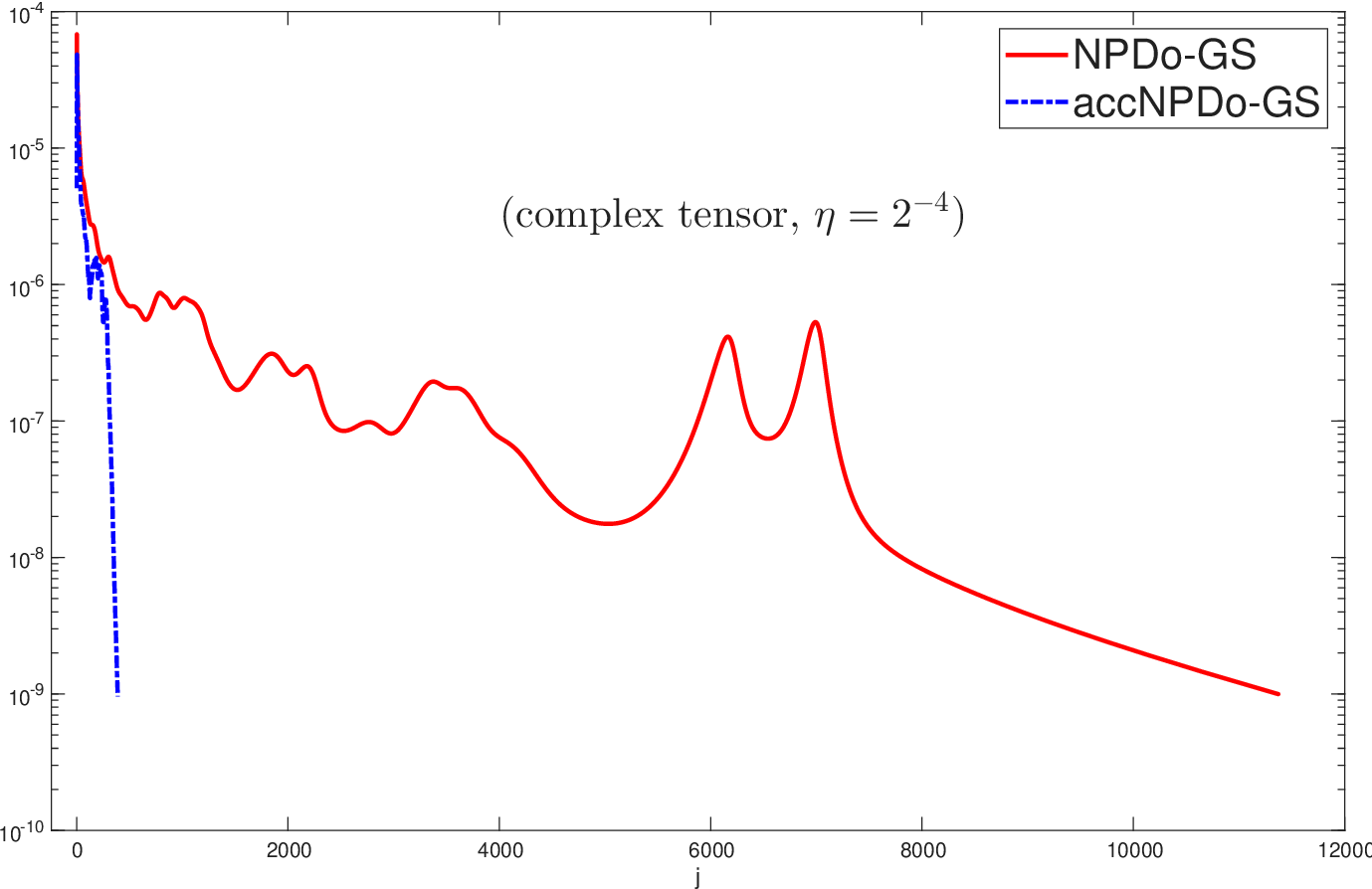}}
  & \resizebox*{0.31\textwidth}{0.17\textheight}{\includegraphics{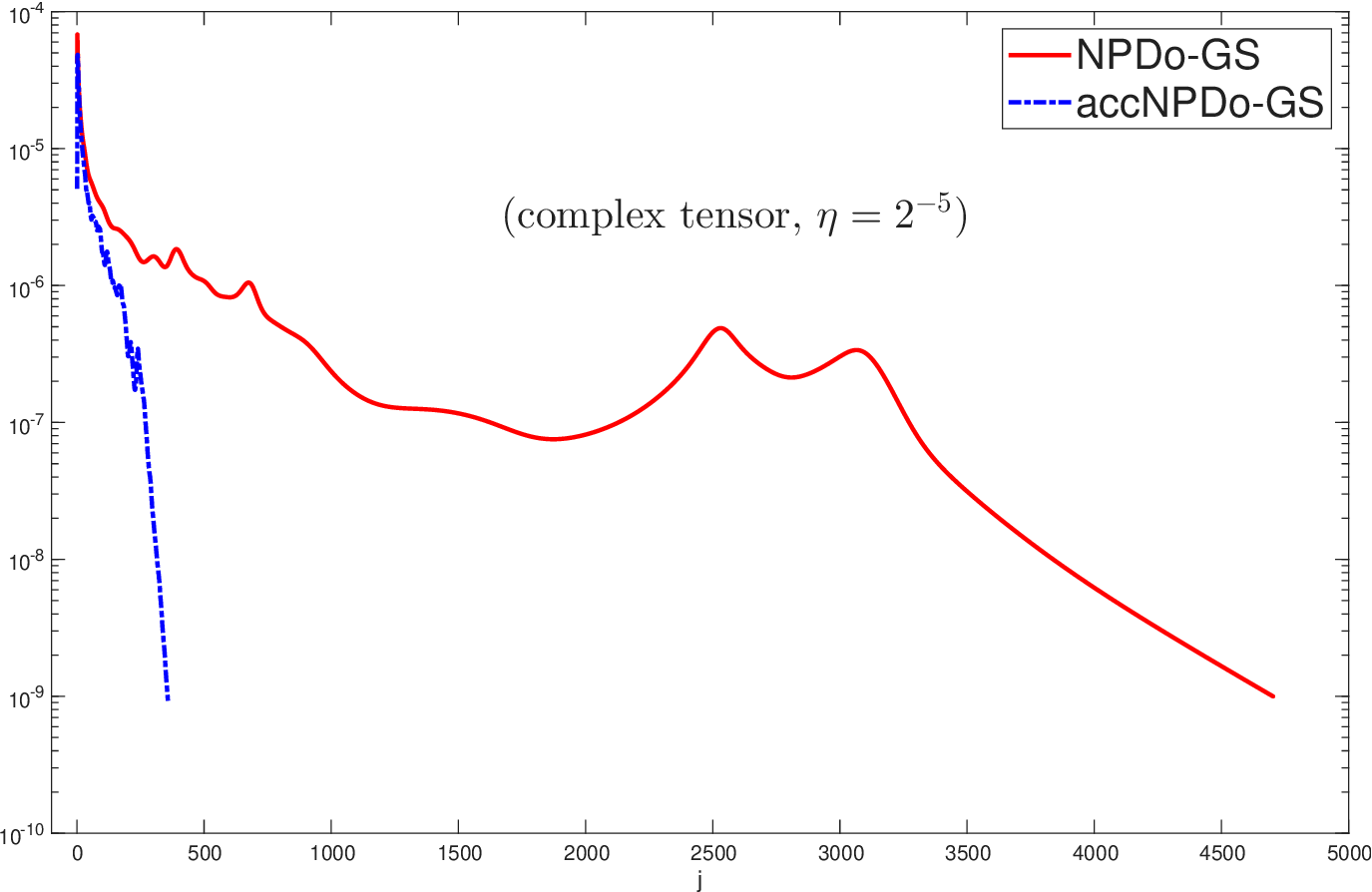}} \\
    \resizebox*{0.31\textwidth}{0.17\textheight}{\includegraphics{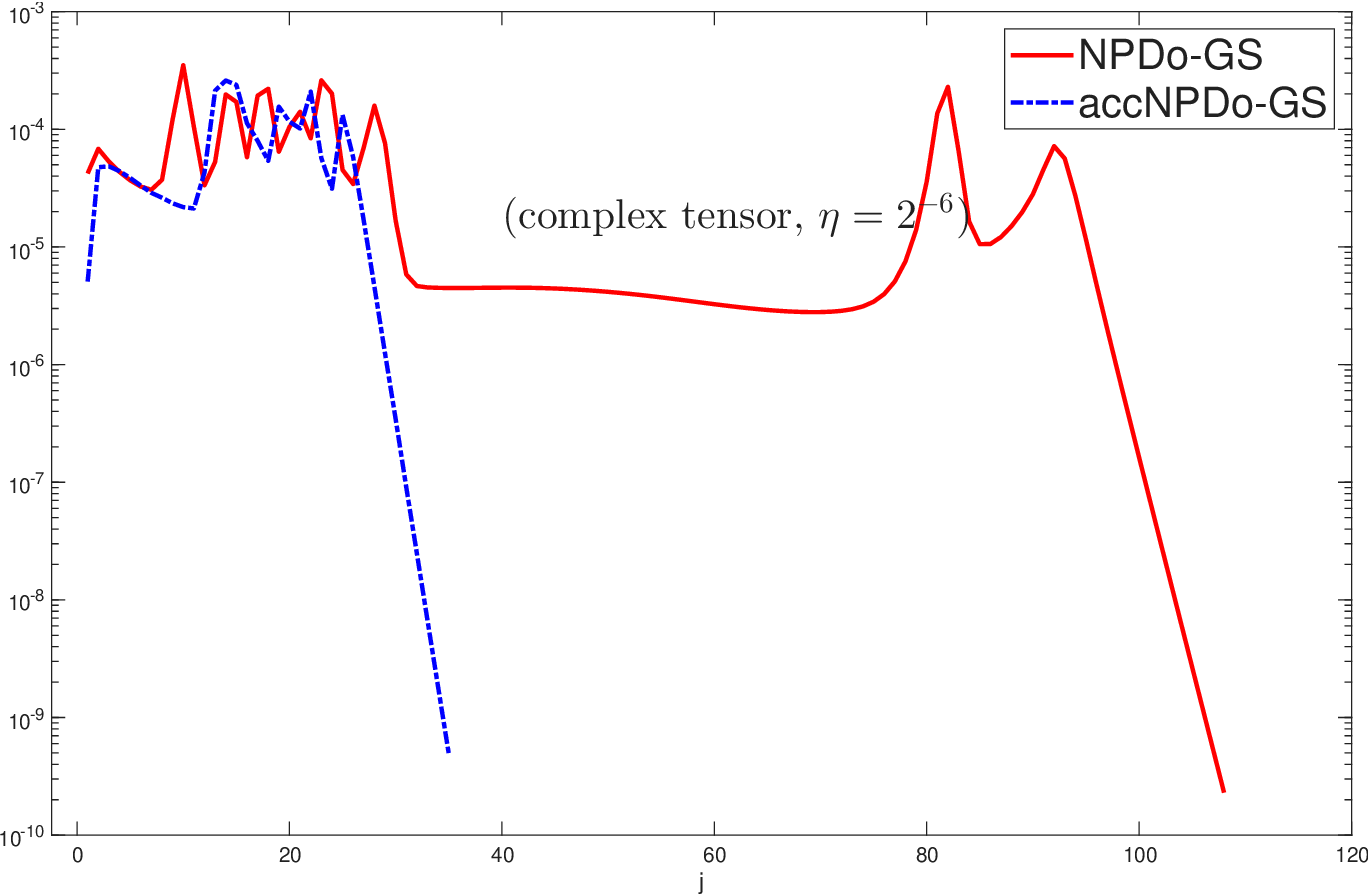}}
  & \resizebox*{0.31\textwidth}{0.17\textheight}{\includegraphics{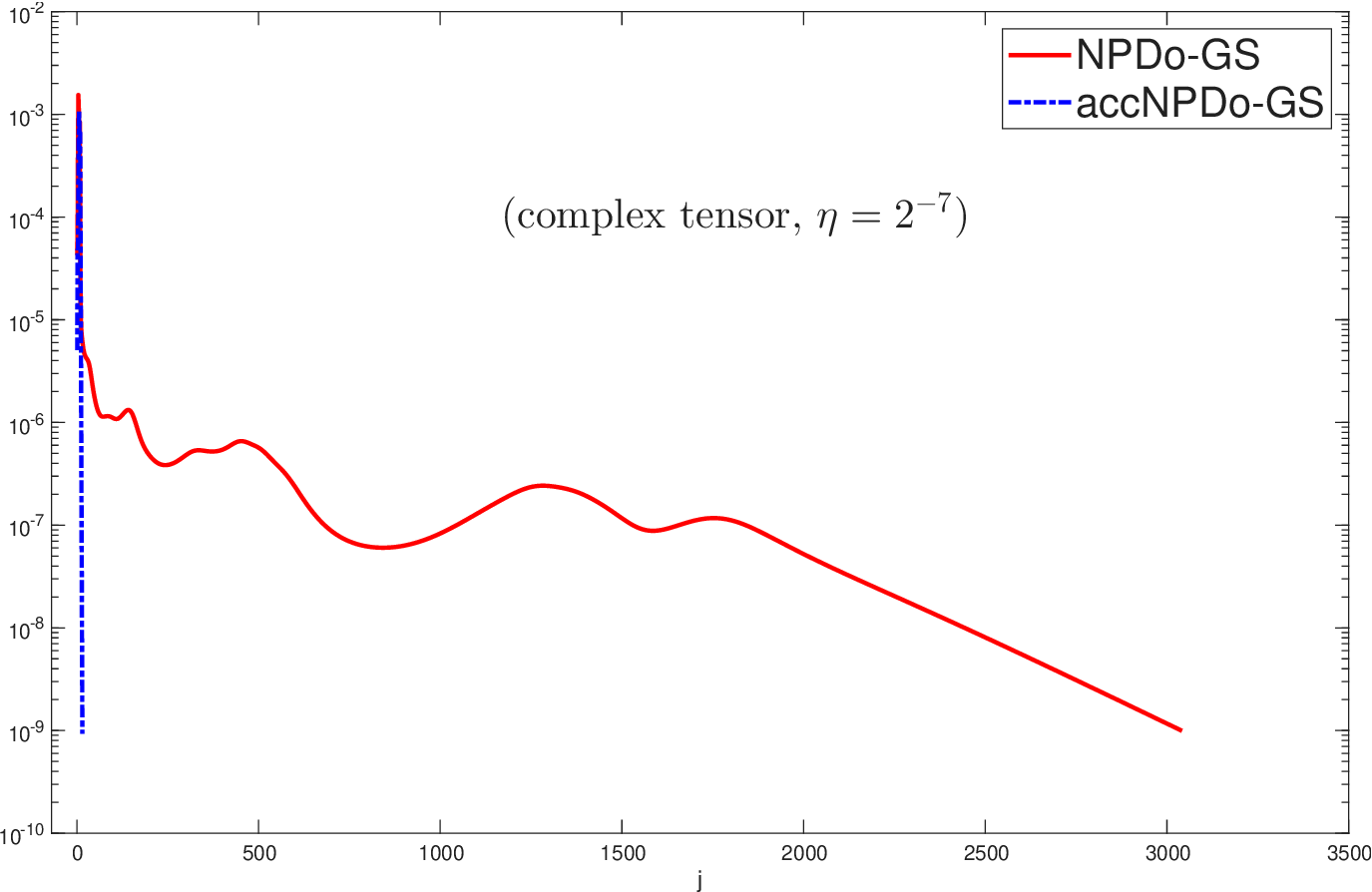}}
  & \resizebox*{0.31\textwidth}{0.17\textheight}{\includegraphics{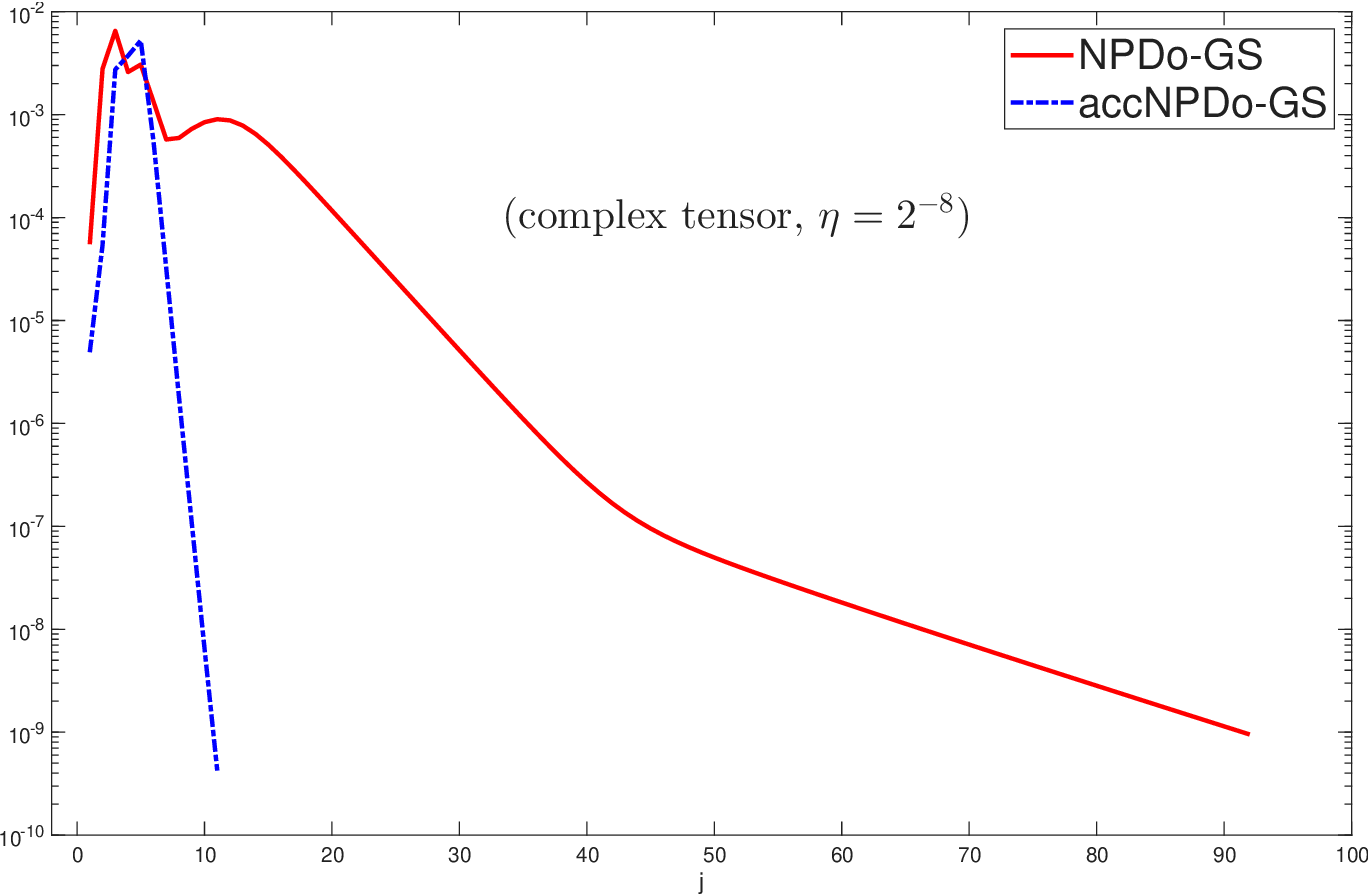}}
\end{tabular}\par
}
\vspace{-0.15 cm}
\caption{\small Principal tensor block-diagonalization (\ptbd) by the NPDo approach --  convergence  in terms of KKT residual $\tilde\epsilon_{\KKT,j}$ as defined in \eqref{eq:stop-BTSVD'} on
  randomly generated tensors according to \eqref{eq:random-B} with varying $\eta$ from $2^{-8}$ down to $2^{-3}$:
  top two rows are for real $B\in\bbR^{500\times 550\times 600}$ and bottom two rows for complex
  $B\in\bbC^{400\times 440\times 480}$.
  }
\label{fig:bdiagTS-behavior-cvg}
\end{figure}

\begin{figure}[t]
{\centering
\begin{tabular}{ccc}
    \resizebox*{0.31\textwidth}{0.17\textheight}{\includegraphics{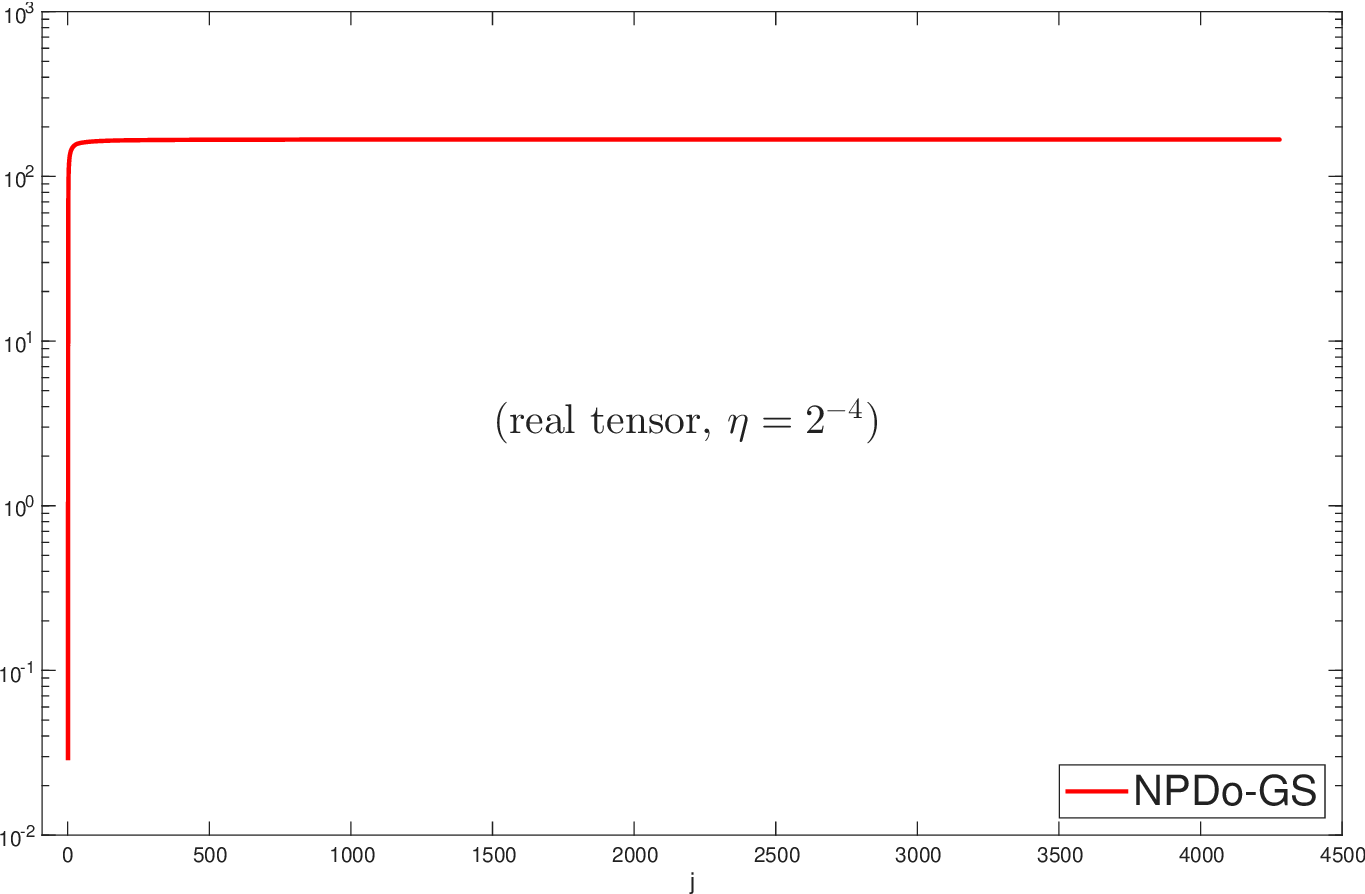}}
  & \resizebox*{0.31\textwidth}{0.17\textheight}{\includegraphics{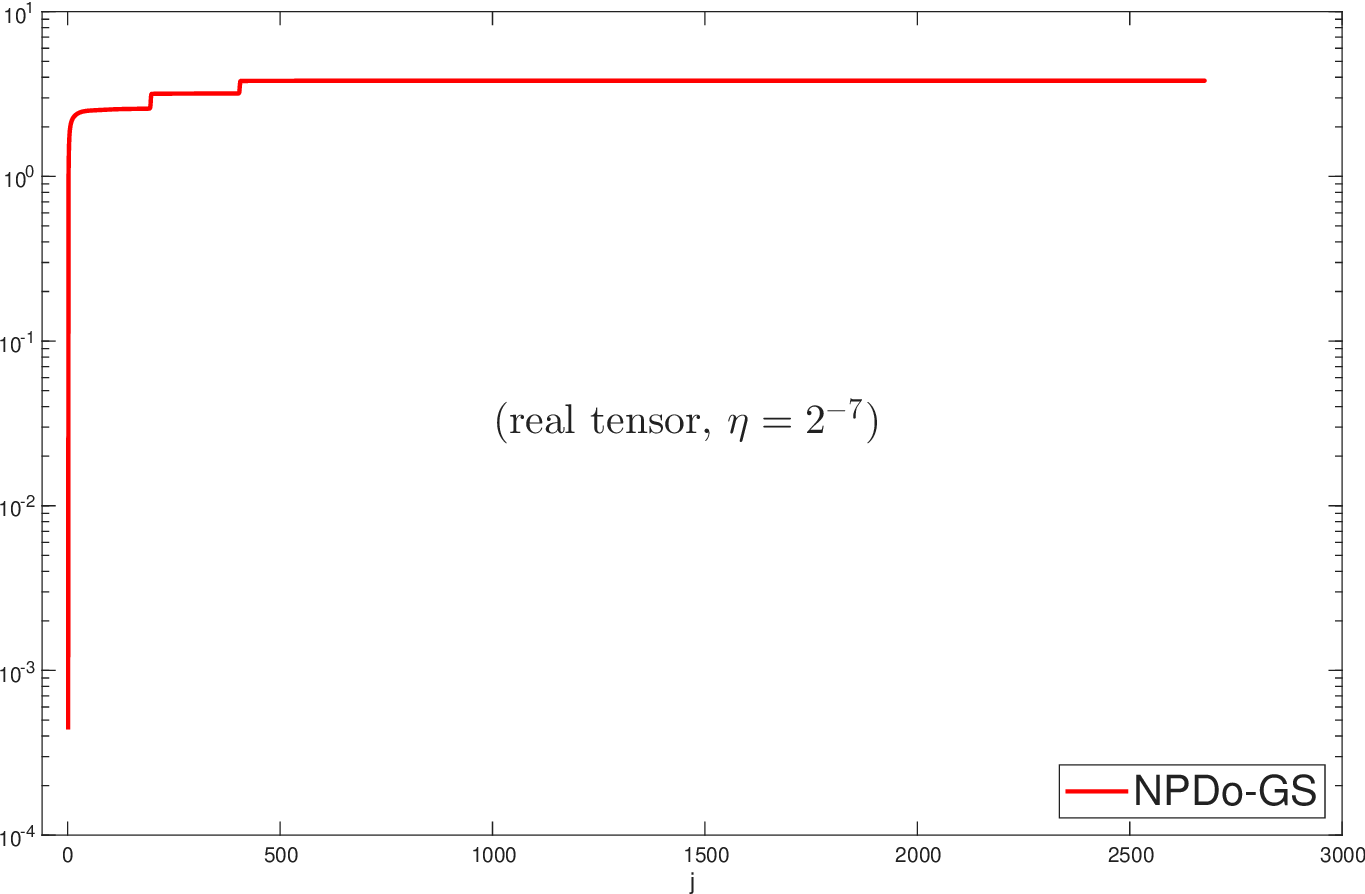}}
  & \resizebox*{0.31\textwidth}{0.17\textheight}{\includegraphics{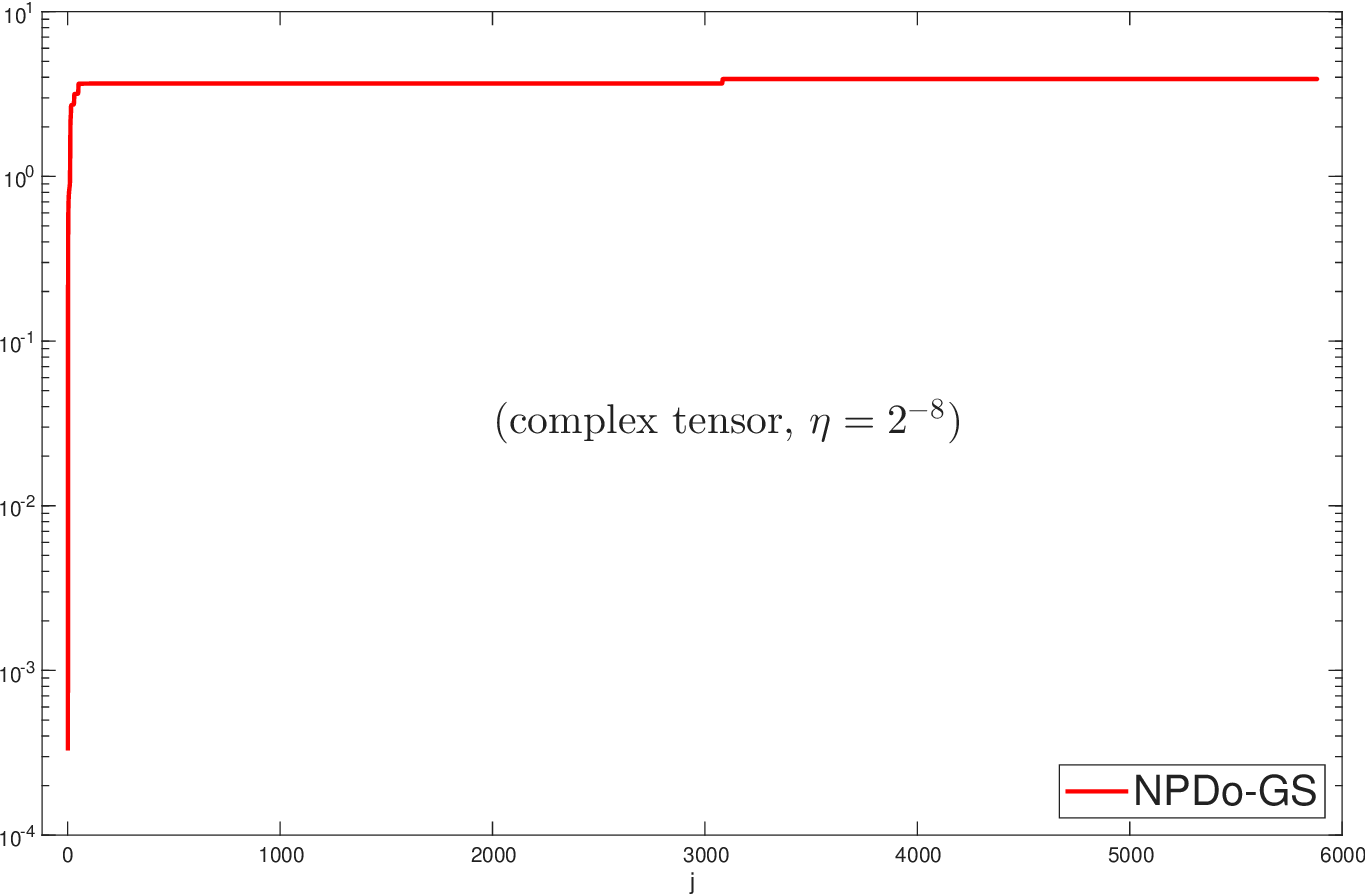}} \\
    \resizebox*{0.31\textwidth}{0.17\textheight}{\includegraphics{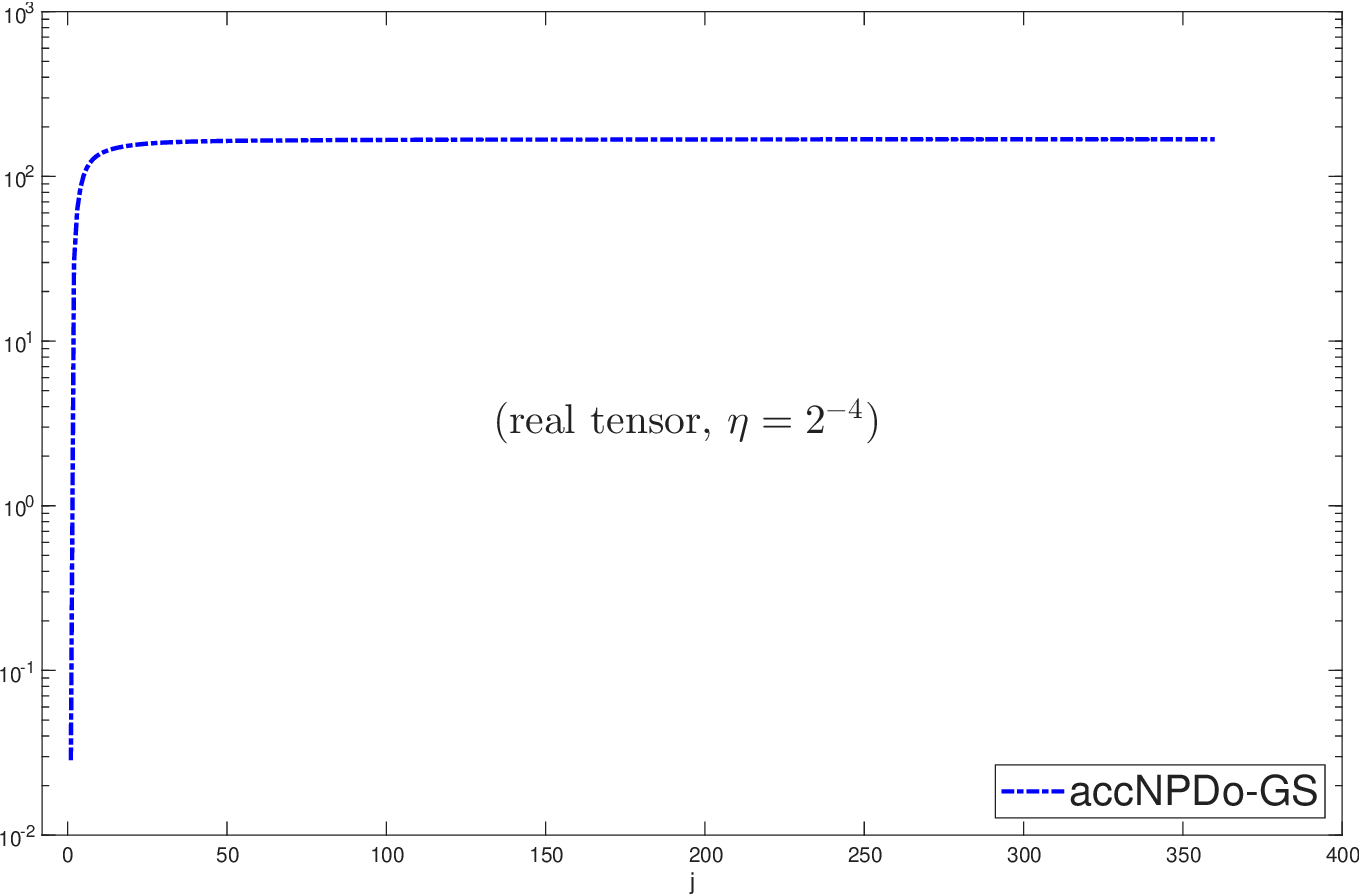}}
  & \resizebox*{0.31\textwidth}{0.17\textheight}{\includegraphics{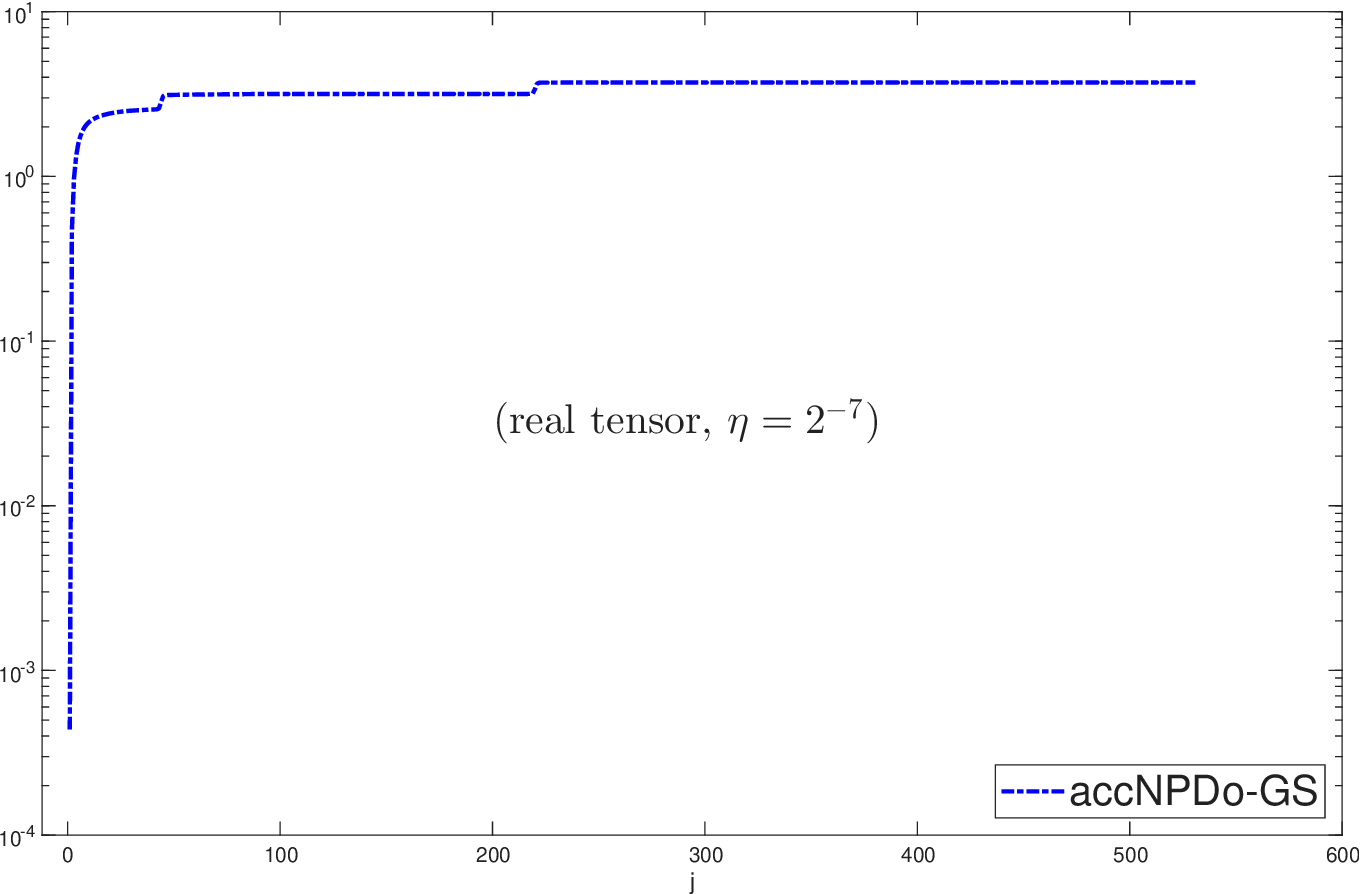}}
  & \resizebox*{0.31\textwidth}{0.17\textheight}{\includegraphics{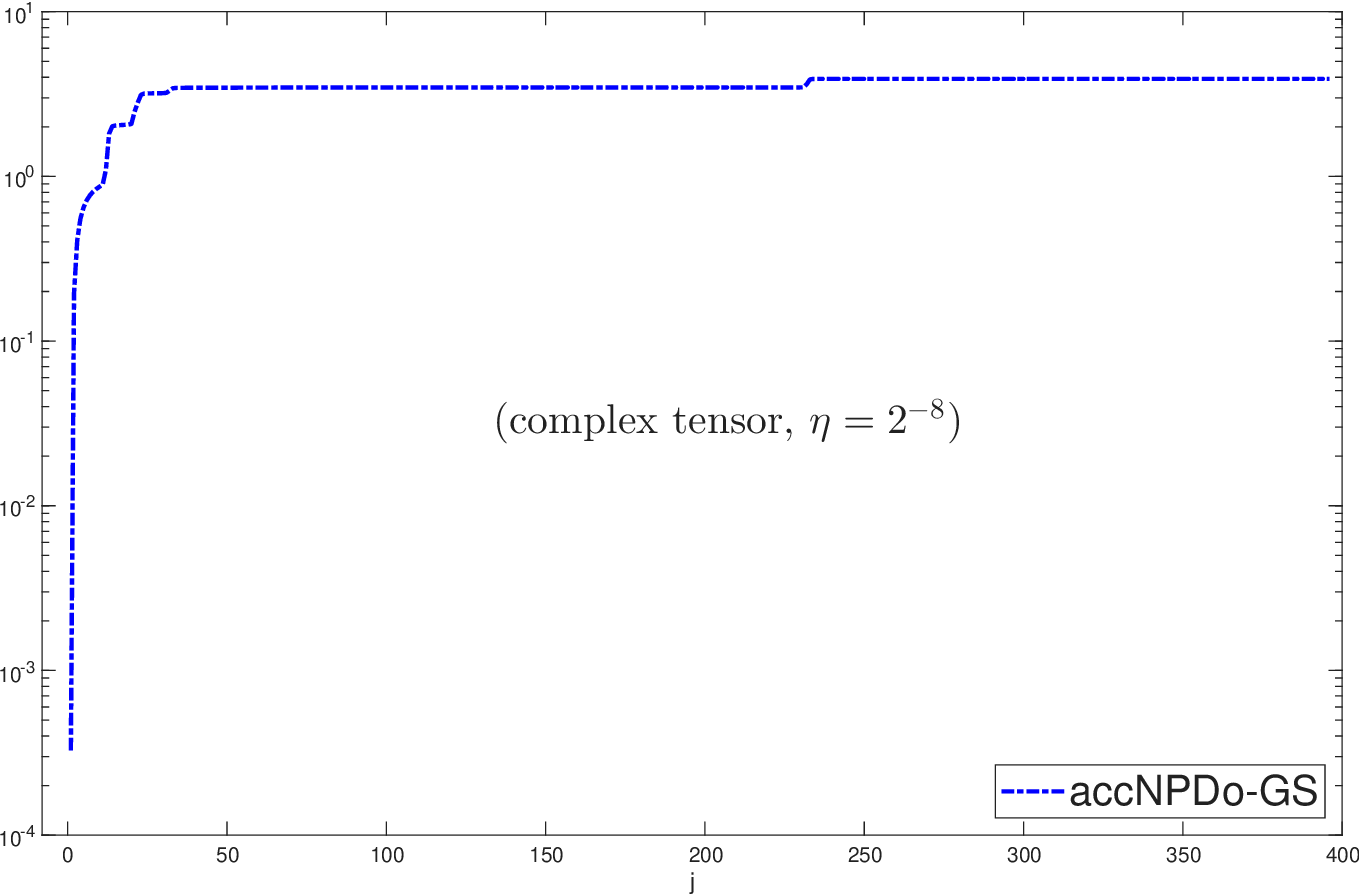}}
\end{tabular}\par
}
\vspace{-0.15 cm}
\caption{\small Principal tensor SVD (\ptsvd) by the NPDo approach -- convergence in terms of
         objective value associated with six of the twelve examples in \Cref{fig:diagTS-behavior-cvg}:
         the first row is for the plain NPDo while the second row is for accNPDo.
  }
\label{fig:diagTS-behavior-cvg-obj}
\end{figure}

\begin{figure}
{\centering
\begin{tabular}{ccc}
    \resizebox*{0.31\textwidth}{0.17\textheight}{\includegraphics{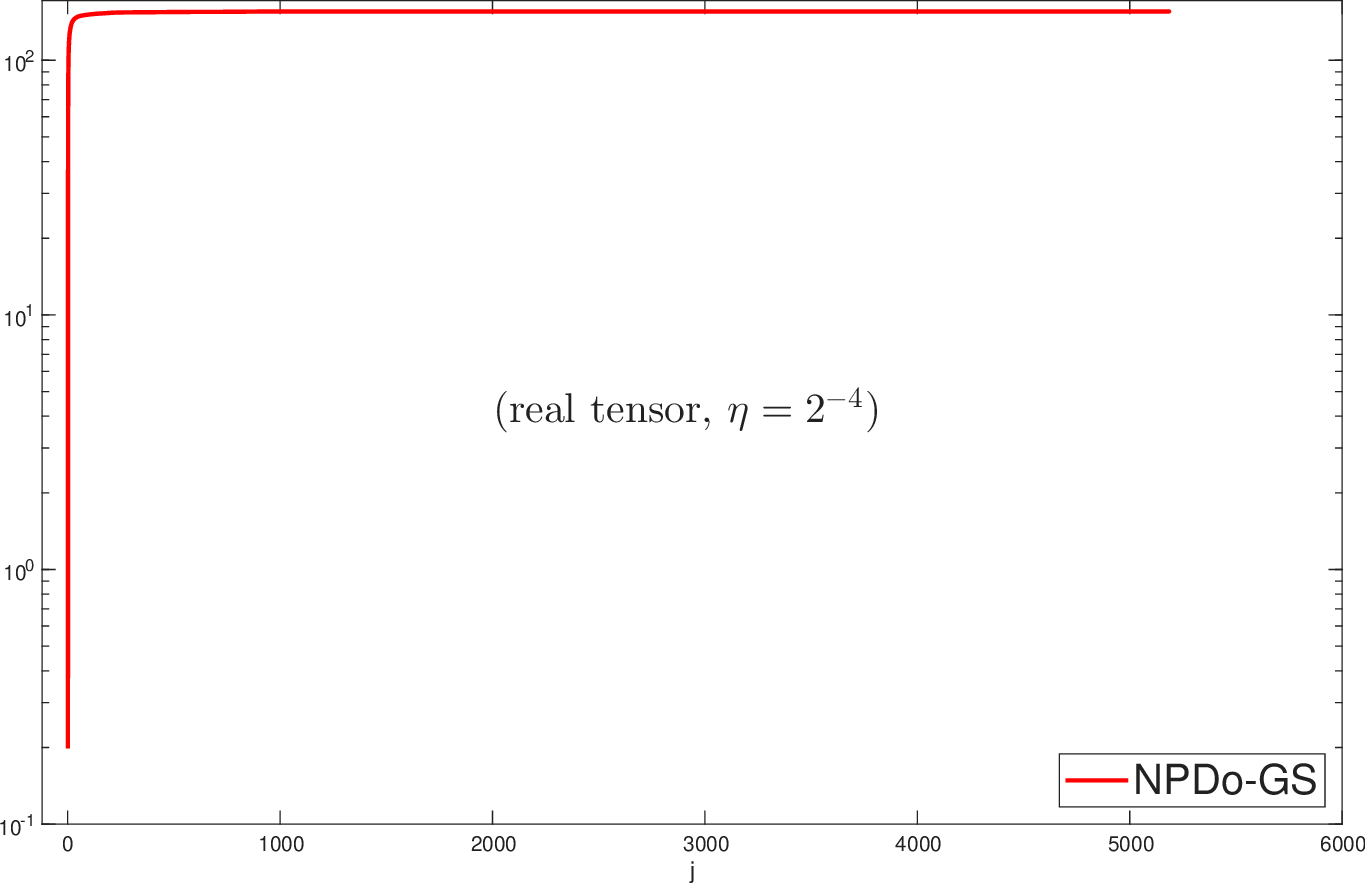}}
  & \resizebox*{0.31\textwidth}{0.17\textheight}{\includegraphics{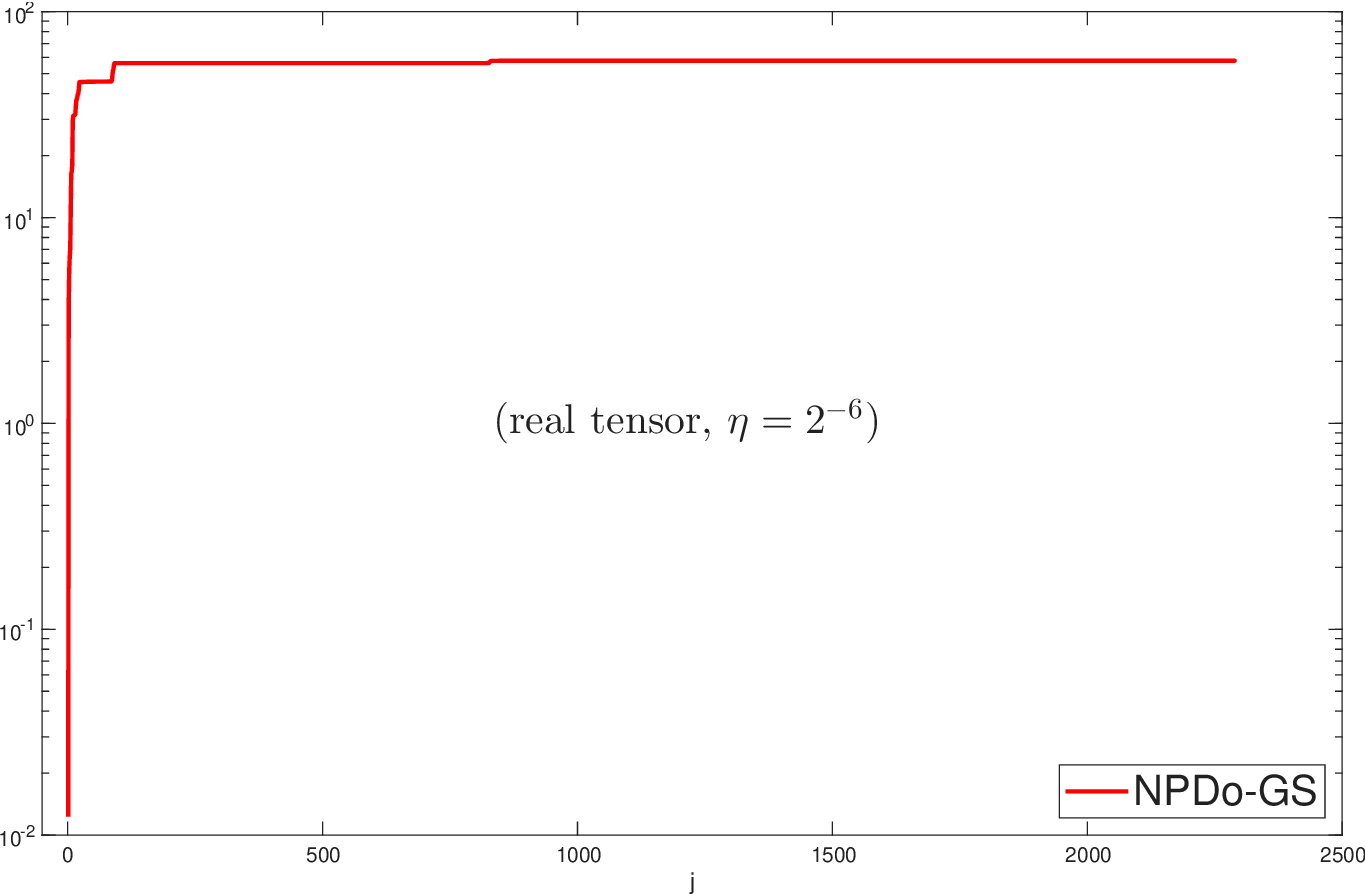}}
  & \resizebox*{0.31\textwidth}{0.17\textheight}{\includegraphics{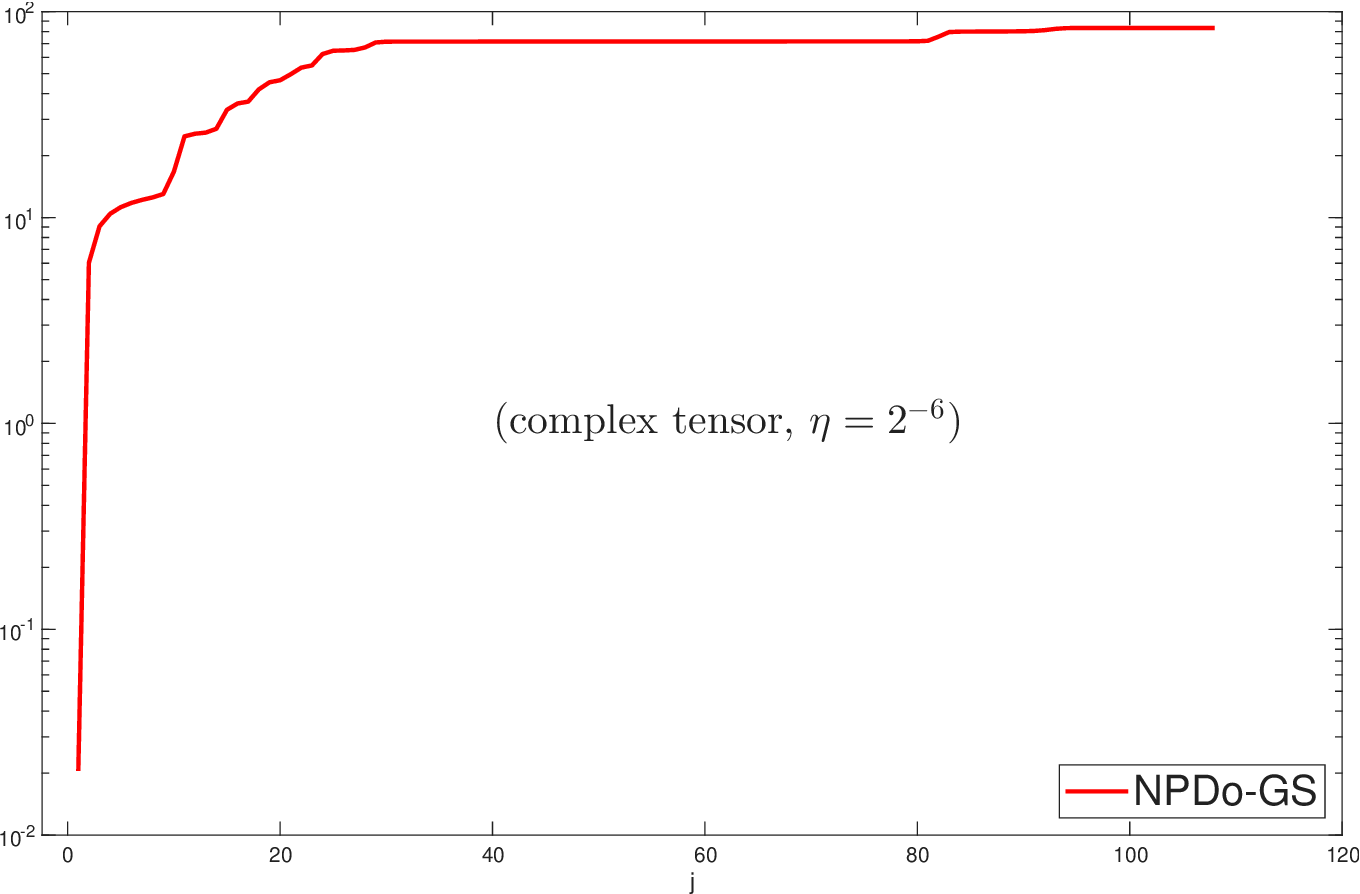}} \\
    \resizebox*{0.31\textwidth}{0.17\textheight}{\includegraphics{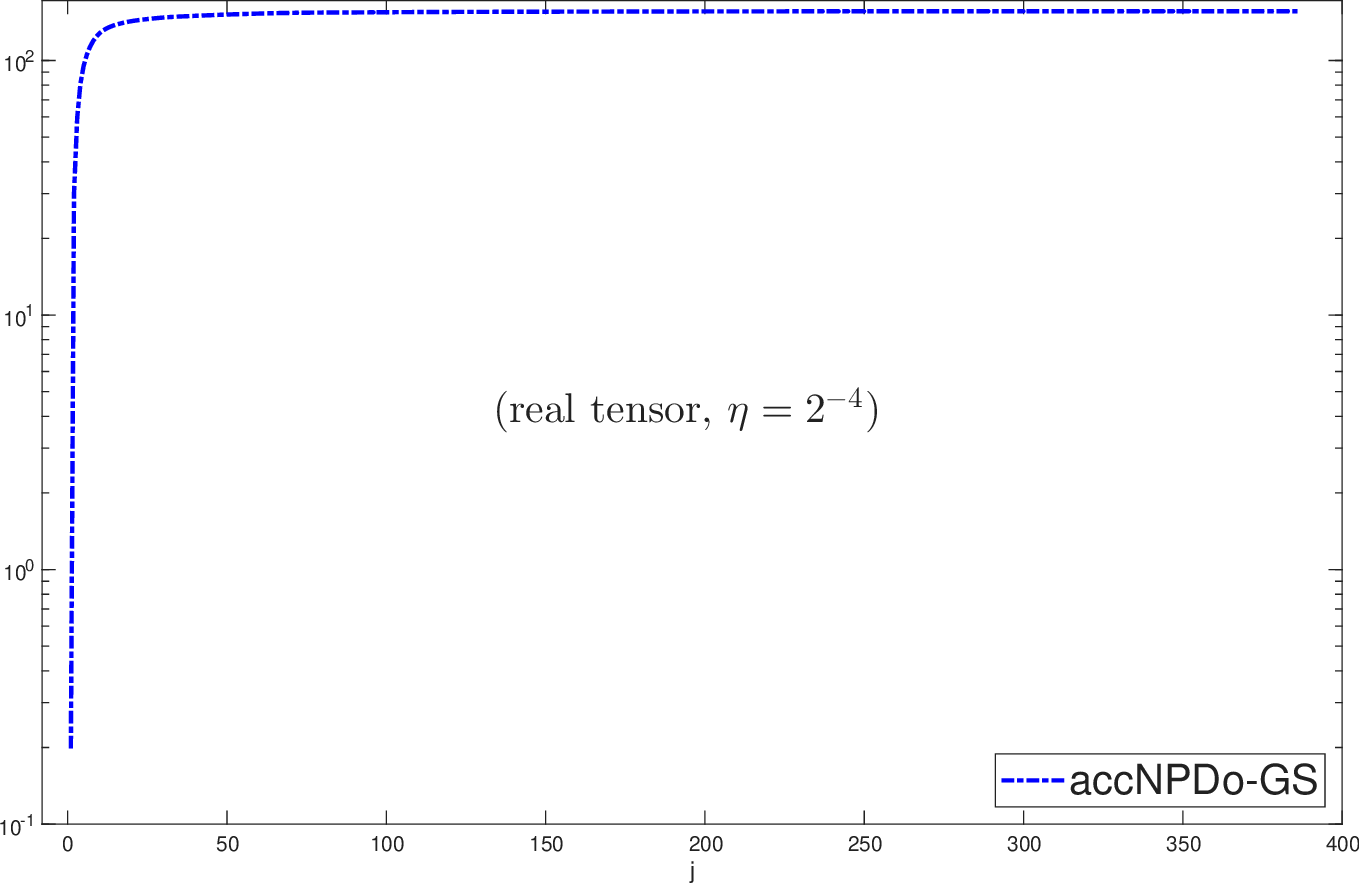}}
  & \resizebox*{0.31\textwidth}{0.17\textheight}{\includegraphics{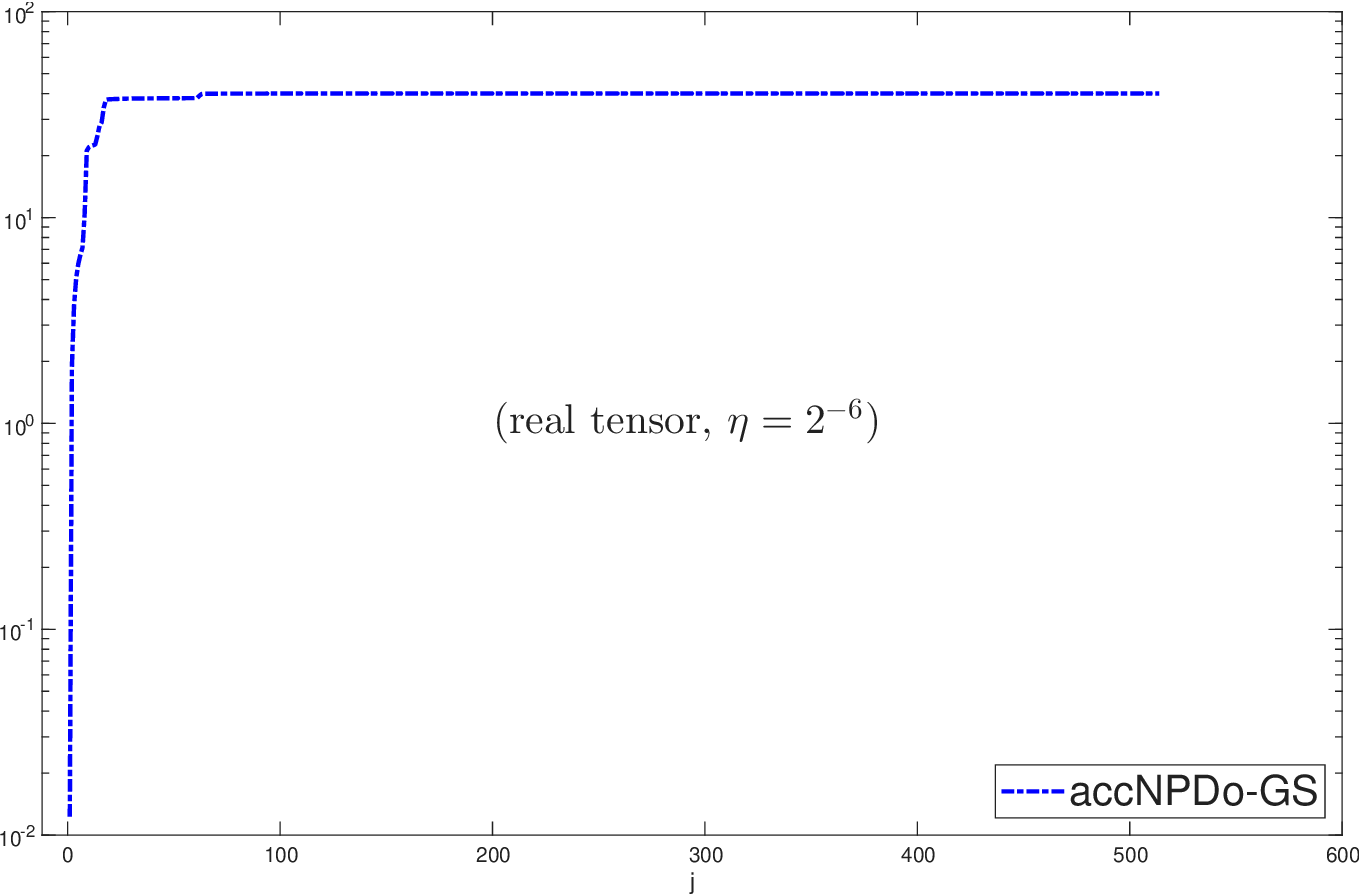}}
  & \resizebox*{0.31\textwidth}{0.17\textheight}{\includegraphics{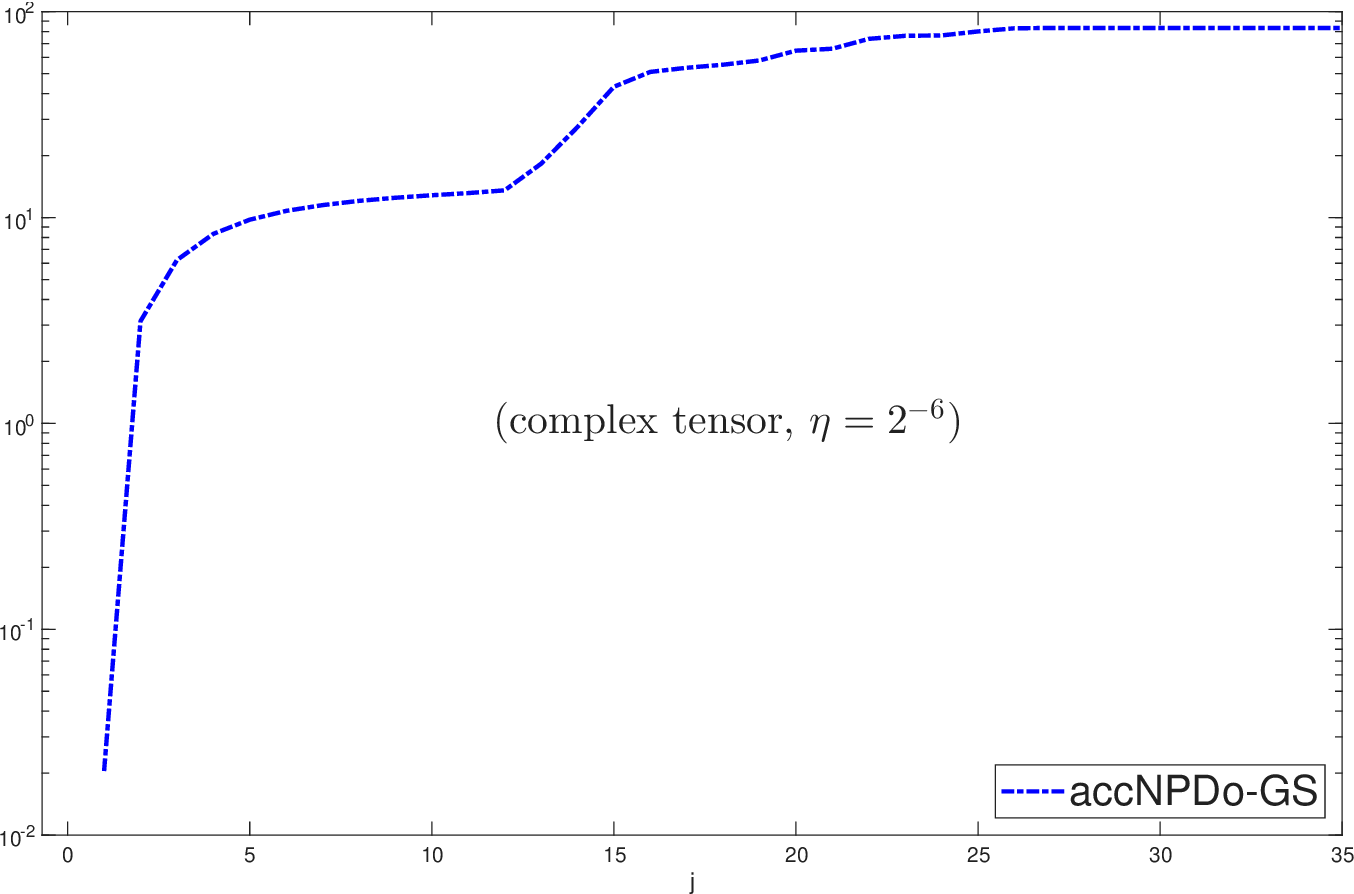}}
\end{tabular}\par
}
\vspace{-0.15 cm}
\caption{\small Principal tensor tensor block-diagonalization (\ptbd) by the NPDo approach -- convergence in terms of
         objective value associated with six of the twelve examples in \Cref{fig:bdiagTS-behavior-cvg}:
         the first row is for the plain NPDo while the second row is for accNPDo.
  }
\label{fig:bdiagTS-behavior-cvg-obj}
\end{figure}

\begin{figure}
{\centering
\begin{tabular}{cc}
    \resizebox*{0.35\textwidth}{0.20\textheight}{\includegraphics{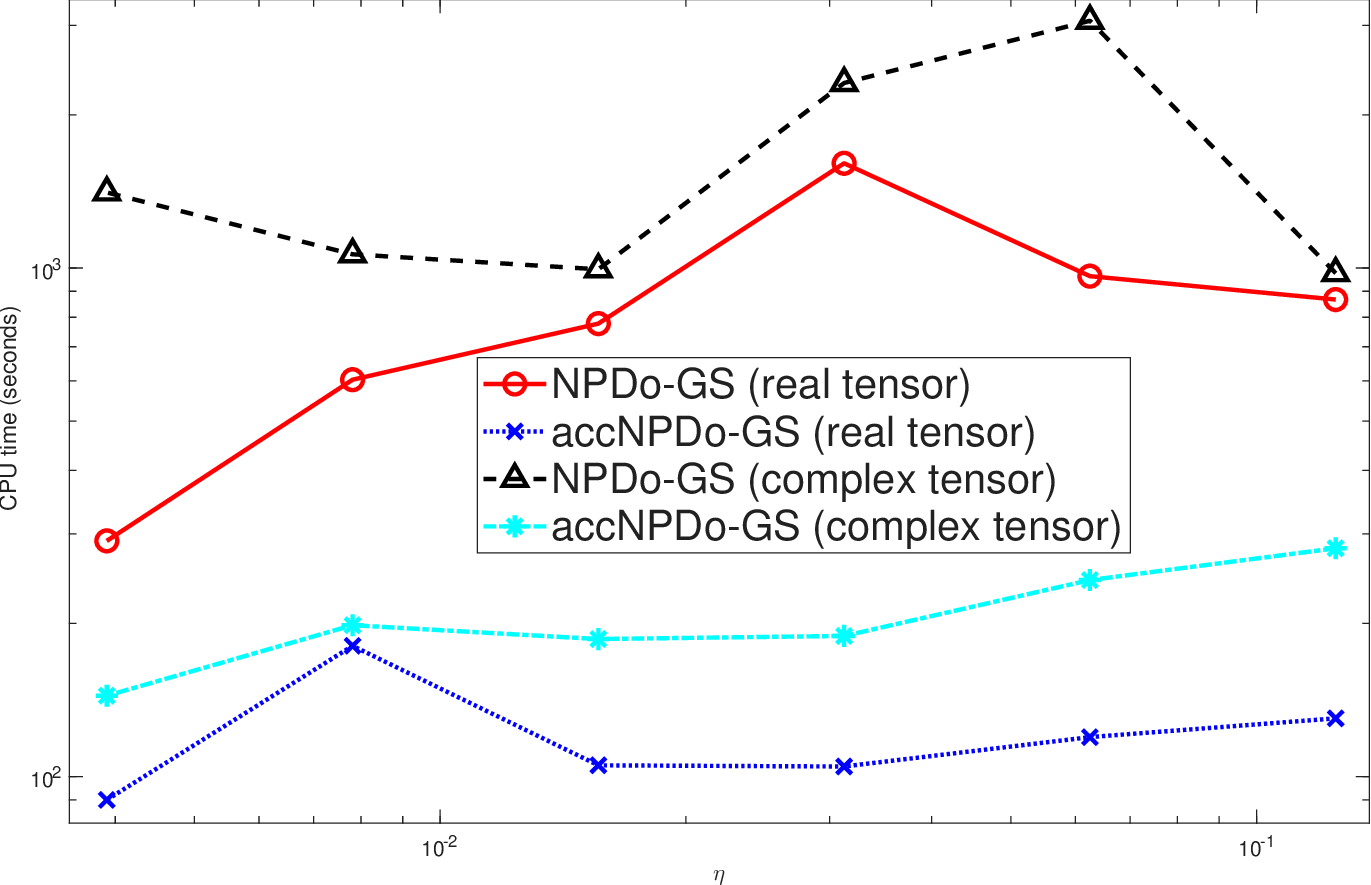}}
  & \resizebox*{0.35\textwidth}{0.20\textheight}{\includegraphics{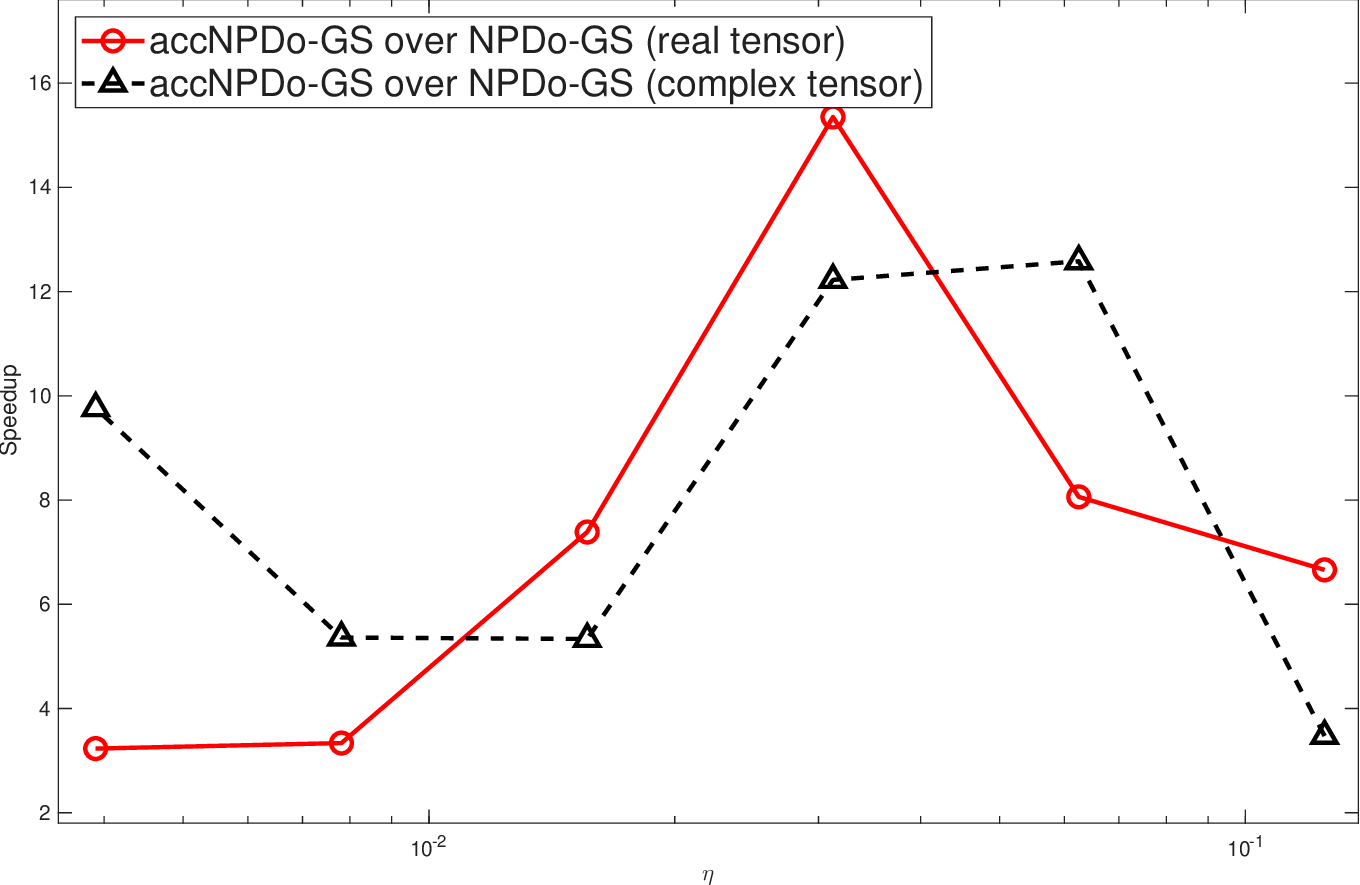}} \\
    \resizebox*{0.35\textwidth}{0.20\textheight}{\includegraphics{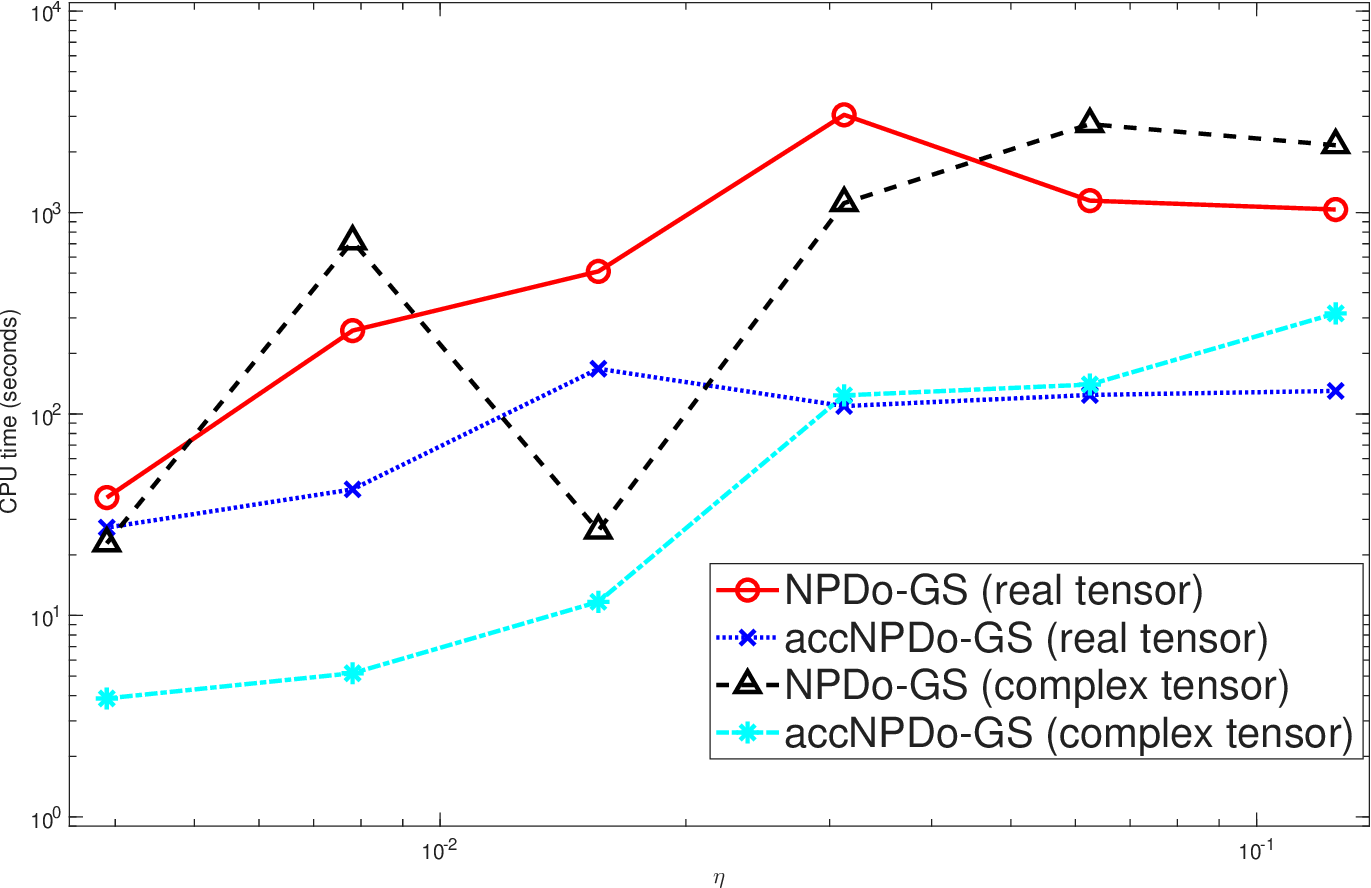}}
  & \resizebox*{0.35\textwidth}{0.20\textheight}{\includegraphics{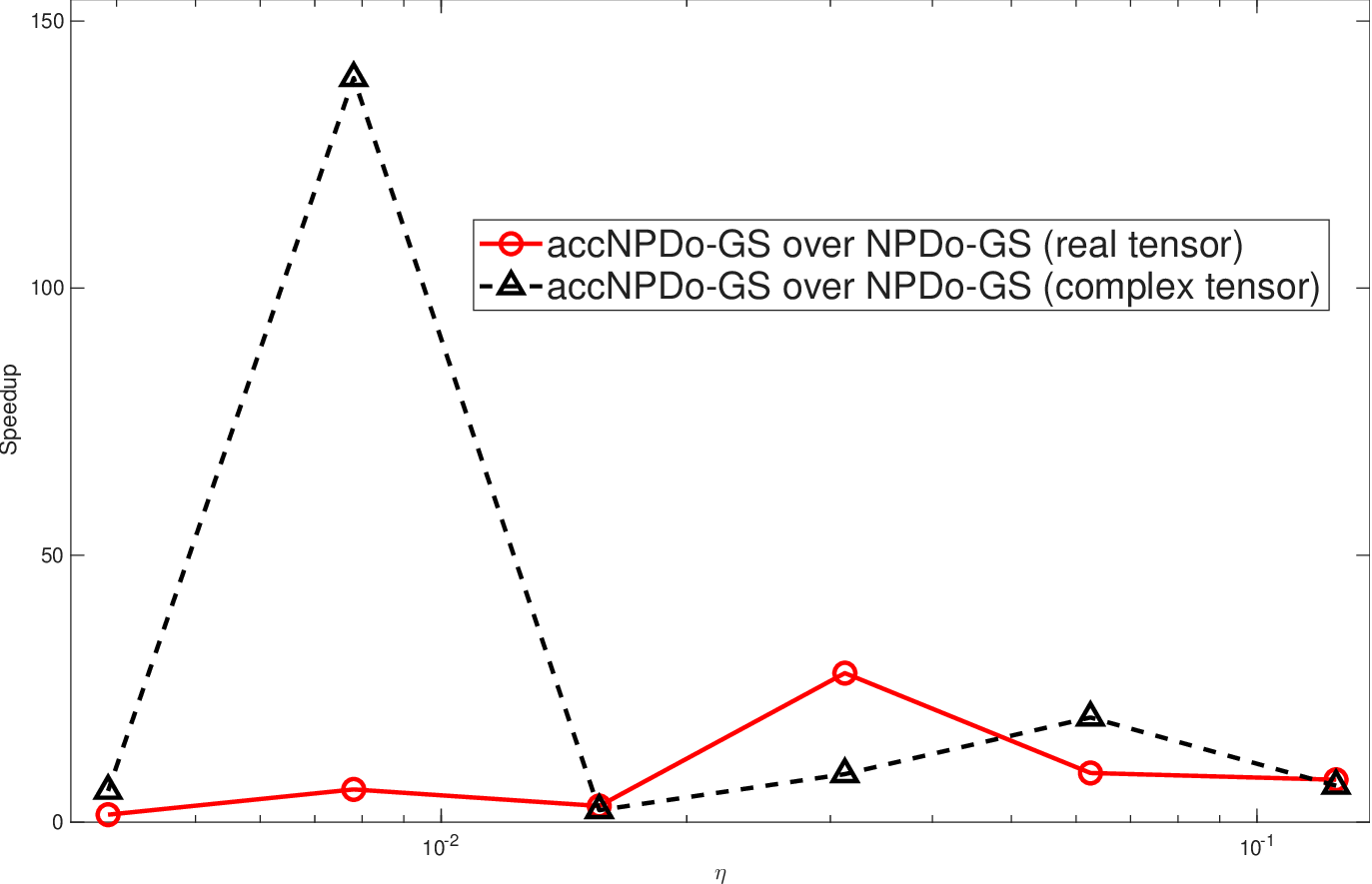}}
\end{tabular}\par
}
\vspace{-0.15 cm}
\caption{\small Principal tensor SVD (\ptsvd) by the NPDo approach -- -- CPU time in seconds (left) and
      and the speedup of accNPDo over NPDo (right): the first row is for
      \ptsvd\ on the examples in \Cref{fig:diagTS-behavior-cvg} while the second row is for
      \ptbd\ on the examples in \Cref{fig:bdiagTS-behavior-cvg}.
  }
\label{fig:(b)diagTS-behavior-speed}
\end{figure}


\Cref{fig:diagTS-behavior-cvg,fig:bdiagTS-behavior-cvg}
plot convergence behavior in terms of KKT residual $\tilde\epsilon_{\KKT,j}$ as defined in \eqref{eq:stop-BTSVD'}.
%
We observe the following:
\begin{enumerate}[(1)]
  \item At convergence, all KKT residuals $\tilde\epsilon_{\KKT,j}$ are $10^{-9}$ or smaller,
        indicating approximate stationary points are found accurately as dictated by stopping criterion
        $\tilde\epsilon_{\KKT,j}\le 10^{-9}$.
  \item As a general trend, the number of iterations required for convergence decreases as $\eta$ gets smaller.
        By construction, the smaller $\eta$ is, the closer $B$ is to be principally (block)-diagonalizable.
        That is probably the reason behind the trend.
  \item With the LOCG acceleration technique, \Cref{alg:accNPDo:PBTD}
        cuts down the number of (outer) iterations dramatically in comparison to \Cref{alg:NPDo:PBTD}.
        We note that \Cref{alg:accNPDo:PBTD} employs an inner-outer iterative scheme in which the inner
        iteration is concealed in \Cref{alg:NPDo:PBTD} with an adaptive stopping criterion
        as explained in \Cref{rk:SCF4npd+LOCG}(v).
        On the same issue, \Cref{fig:(b)diagTS-behavior-speed} plots the CUP time against $\eta$ and the speedup
        of accNPDo over NPDo.
  \item The overall trend of the KKT residual $\tilde\epsilon_{\KKT,j}$ is moving towards $0$, although it is not monotonic. In fact, we can see some of $\tilde\epsilon_{\KKT,j}$ during the iteration spikes to approximately
      the level at the beginning of whole iterative process. Upon reviewing the trajectory of objective
      value, we find that the spike causes a jump in objective value, too, implying that
      the underlying iteration likely hits some ``saddle point'' and moves on to the next stationary point
      with a larger objective value.
      \Cref{fig:diagTS-behavior-cvg-obj,fig:bdiagTS-behavior-cvg-obj} sample three examples each
      from the experiments in \Cref{fig:diagTS-behavior-cvg,fig:bdiagTS-behavior-cvg}, respectively,
      plotting the objective value trajectory, where we can see sudden jumps in the plots for $\eta\le 2^{-6}$,
      while all objective value trajectories monotonically move up as theoretically guaranteed.
%
\end{enumerate}


\subsection{Scalability}

\begin{figure}[t]
{\centering
\begin{tabular}{ccc}
  \resizebox*{0.31\textwidth}{0.17\textheight}{\includegraphics{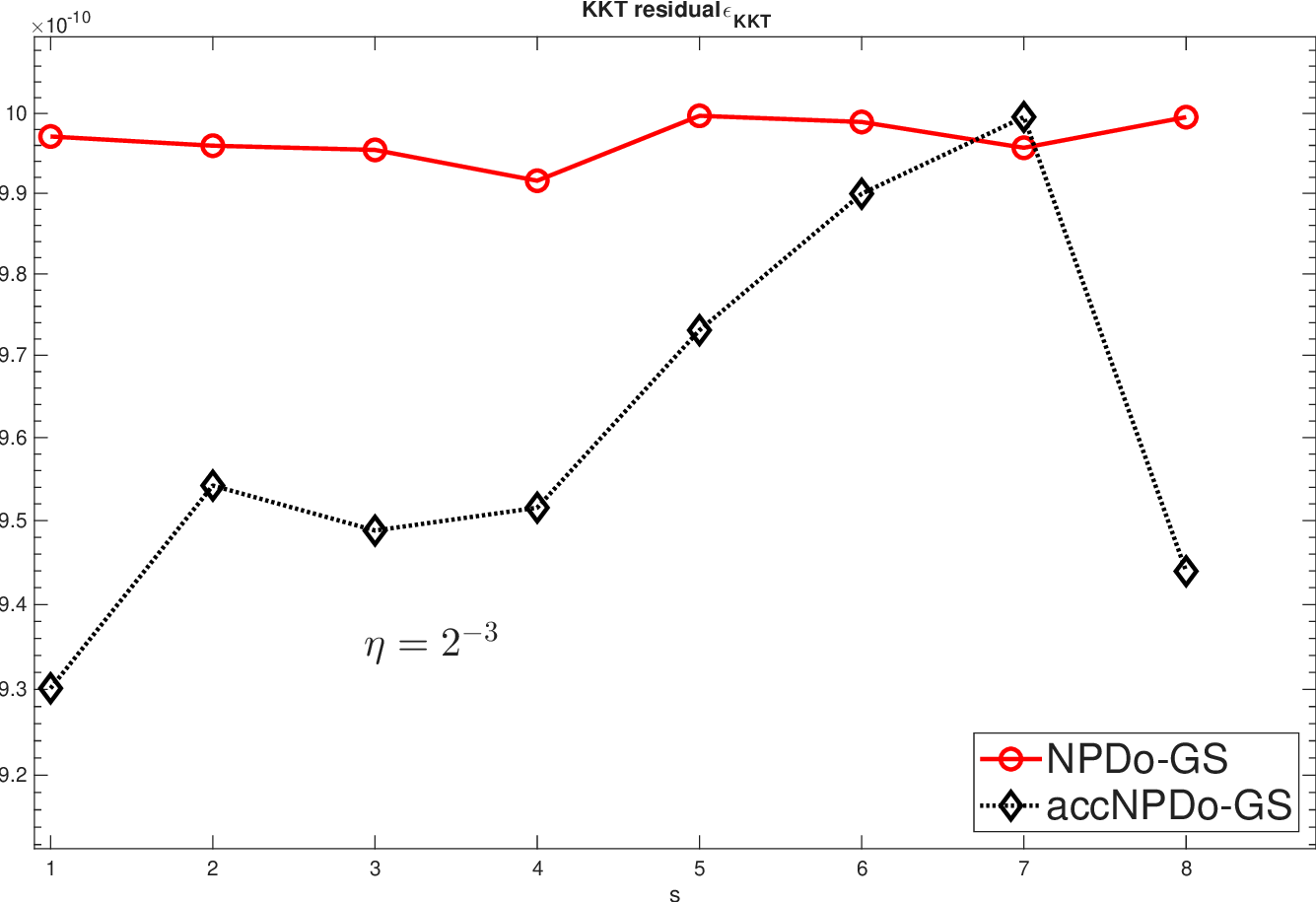}}
  &\resizebox*{0.31\textwidth}{0.17\textheight}{\includegraphics{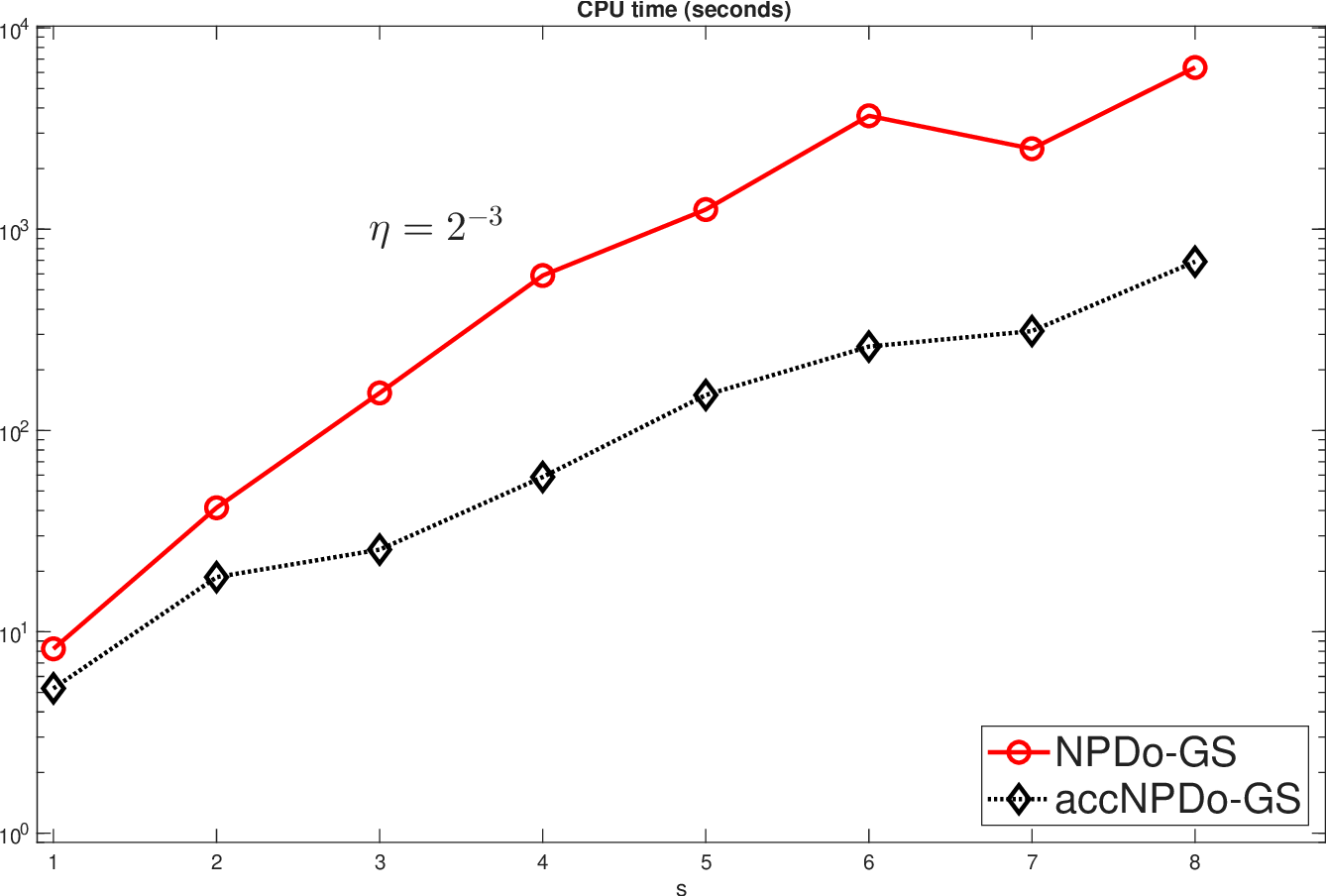}}
  &\resizebox*{0.31\textwidth}{0.17\textheight}{\includegraphics{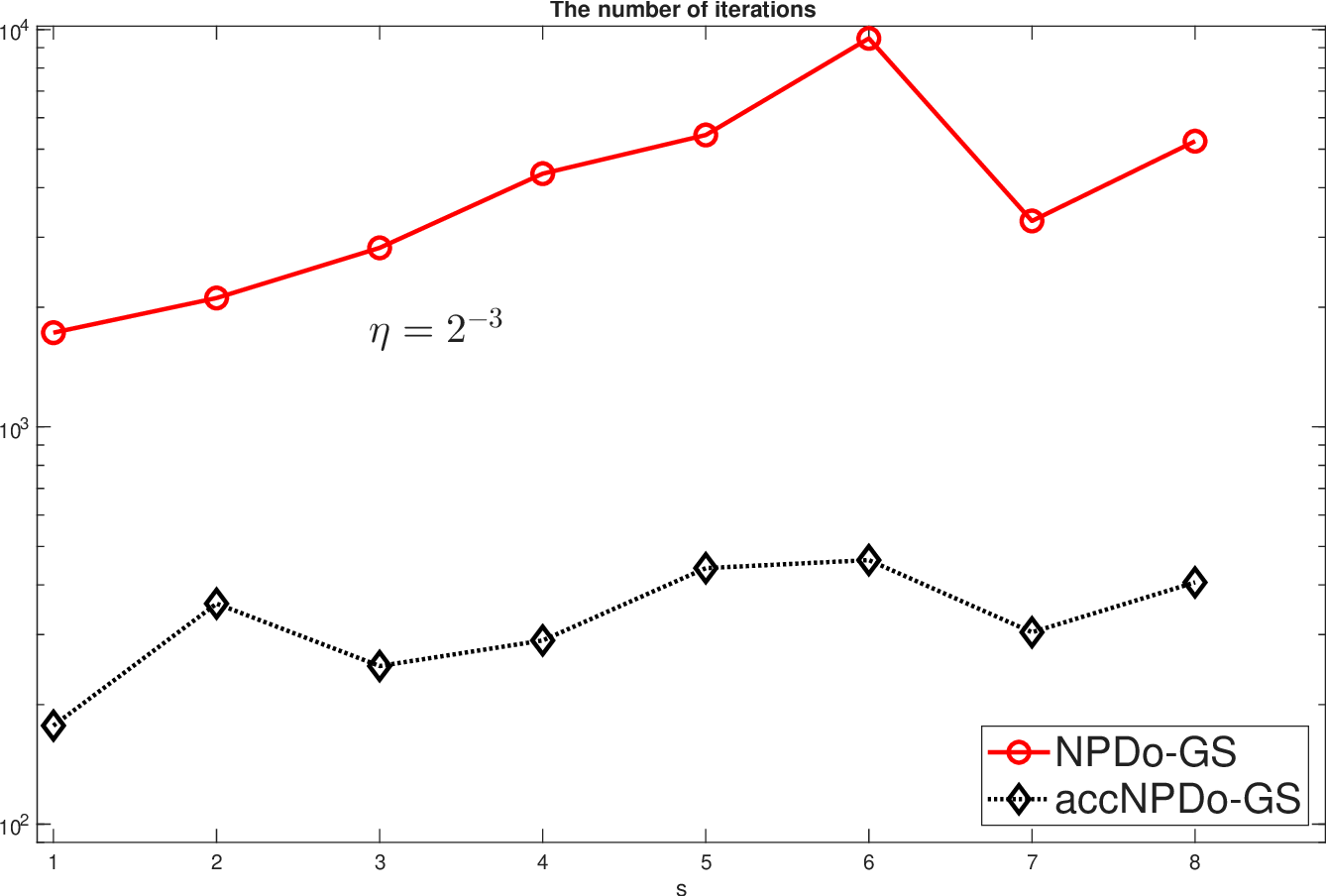}} \\
%
  \resizebox*{0.31\textwidth}{0.17\textheight}{\includegraphics{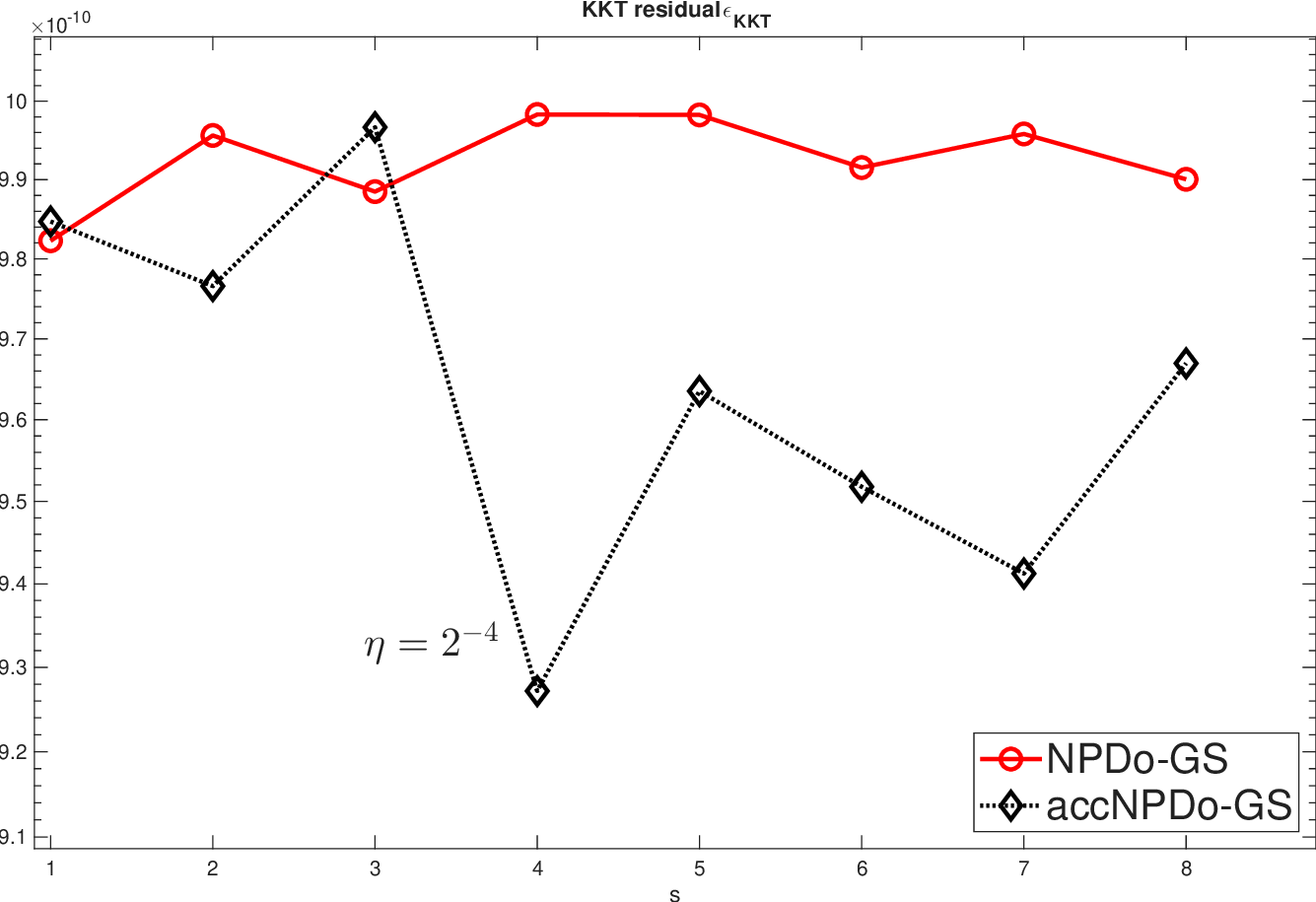}}
  &\resizebox*{0.31\textwidth}{0.17\textheight}{\includegraphics{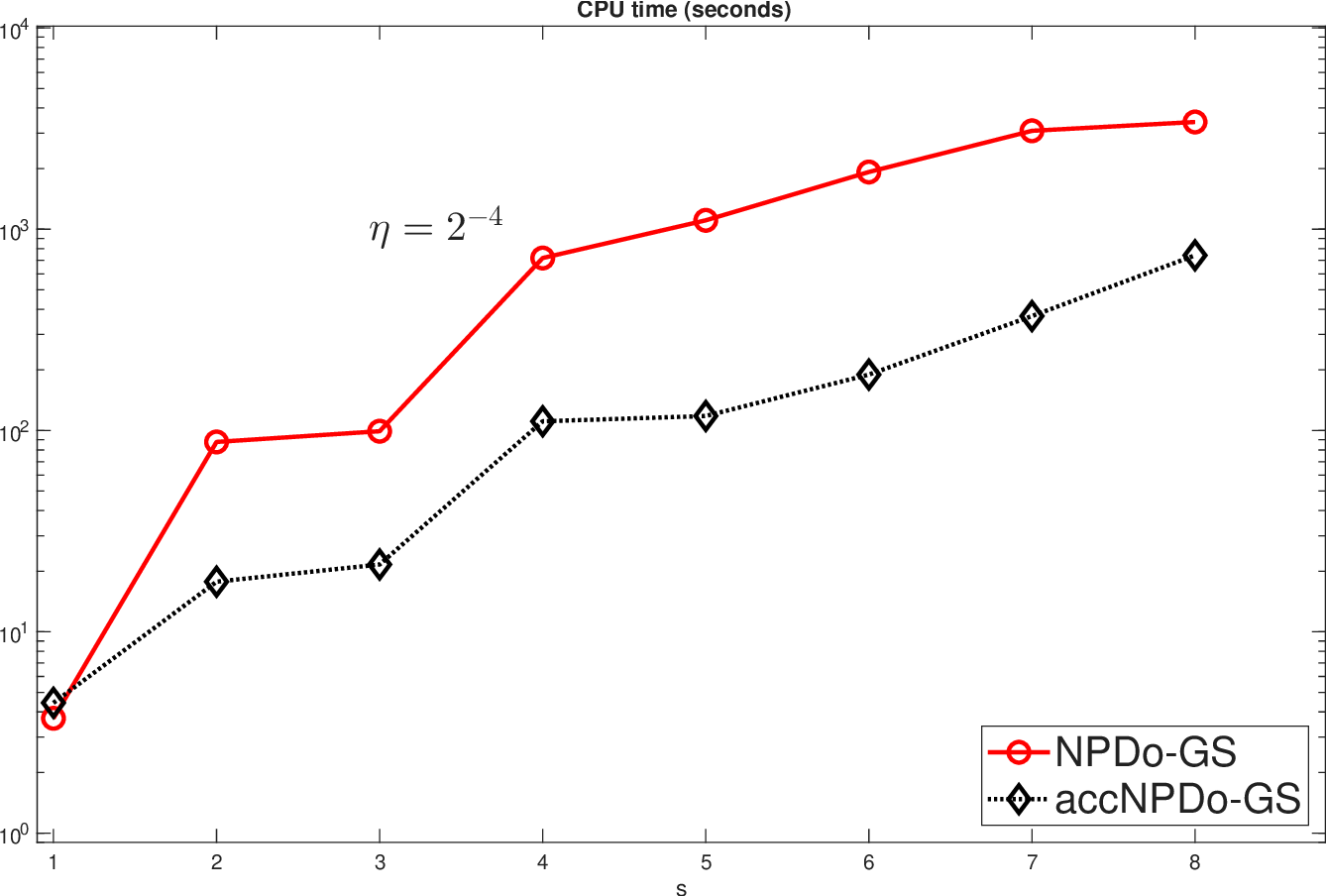}}
  &\resizebox*{0.31\textwidth}{0.17\textheight}{\includegraphics{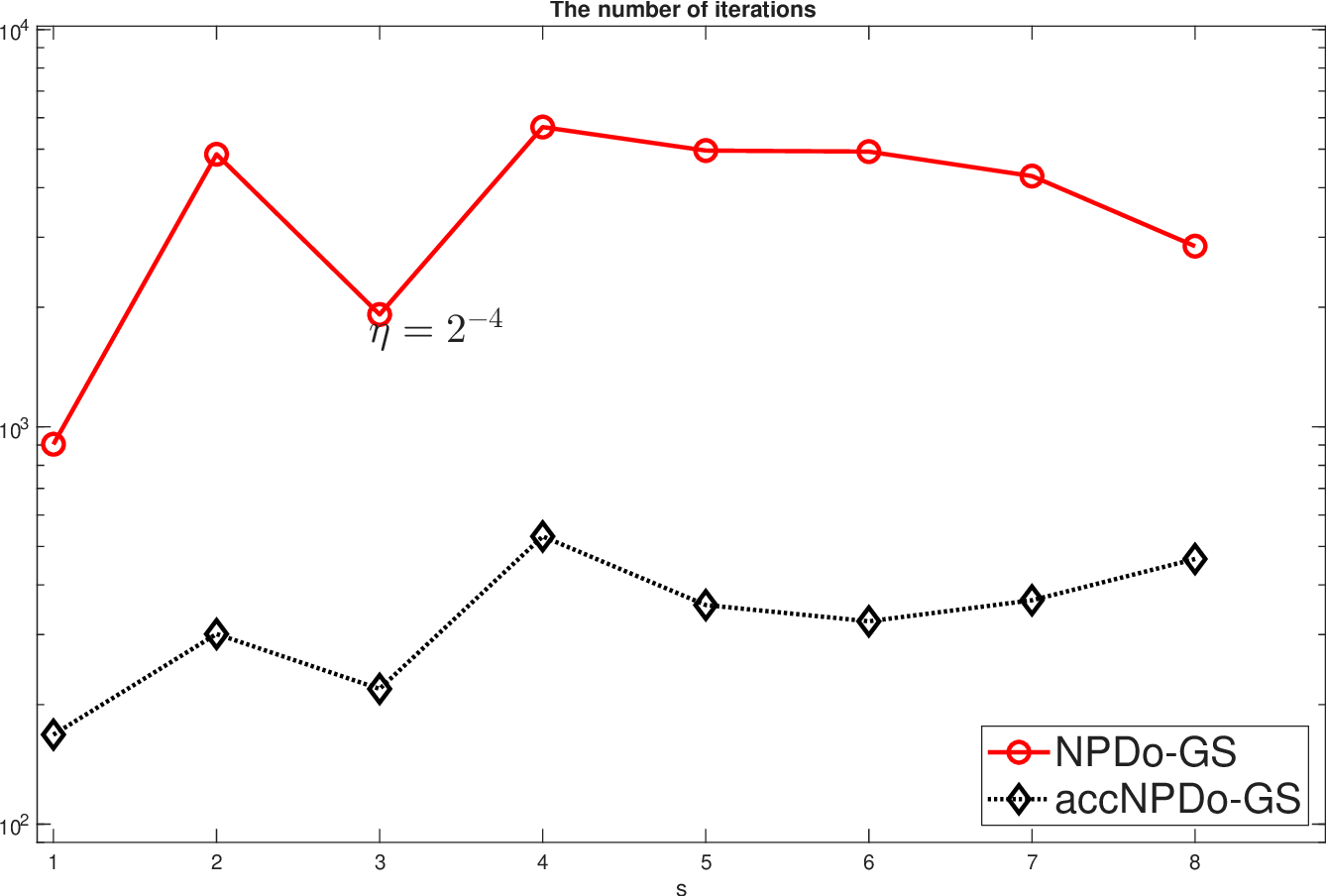}} \\
%
  \resizebox*{0.31\textwidth}{0.17\textheight}{\includegraphics{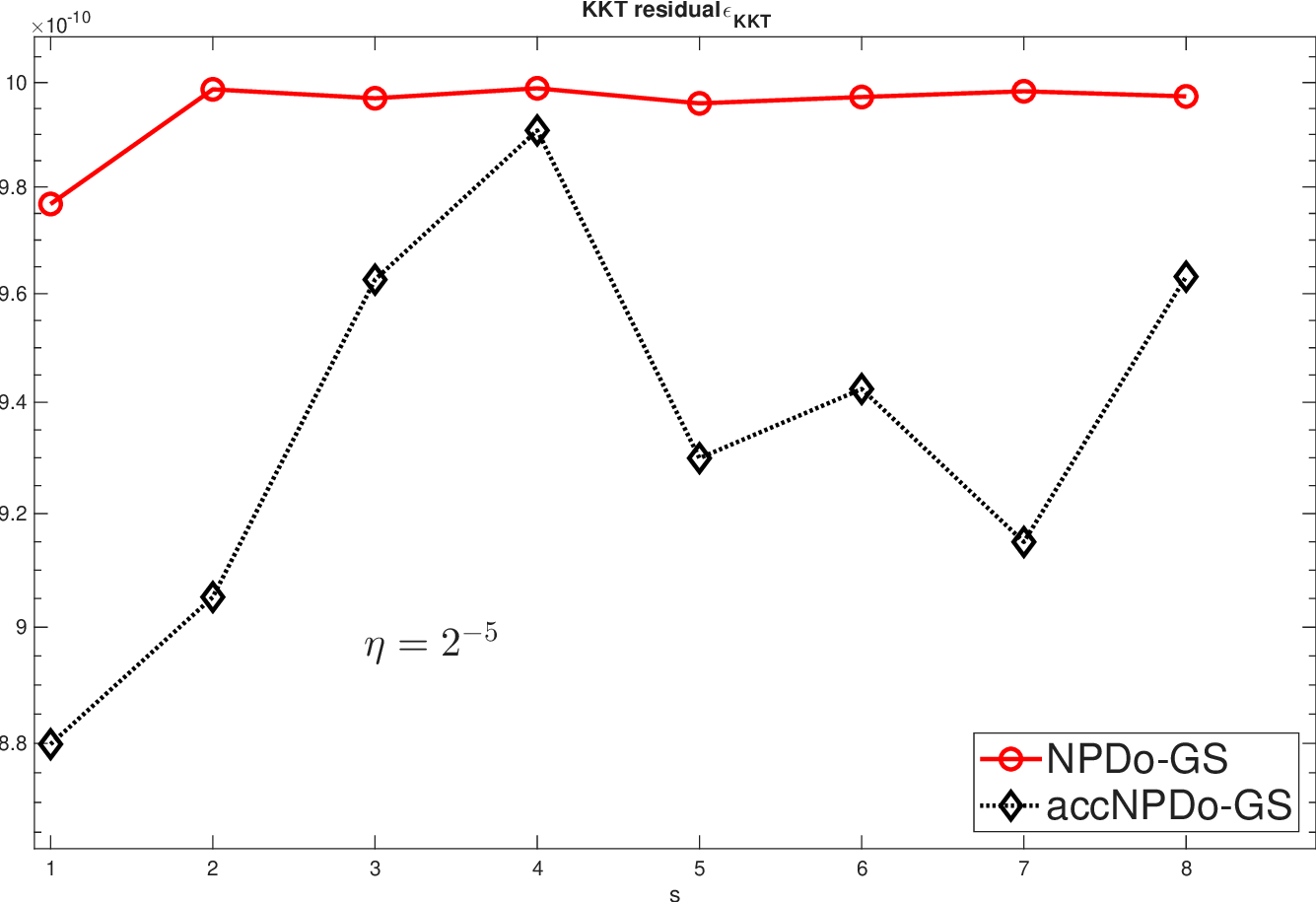}}
  &\resizebox*{0.31\textwidth}{0.17\textheight}{\includegraphics{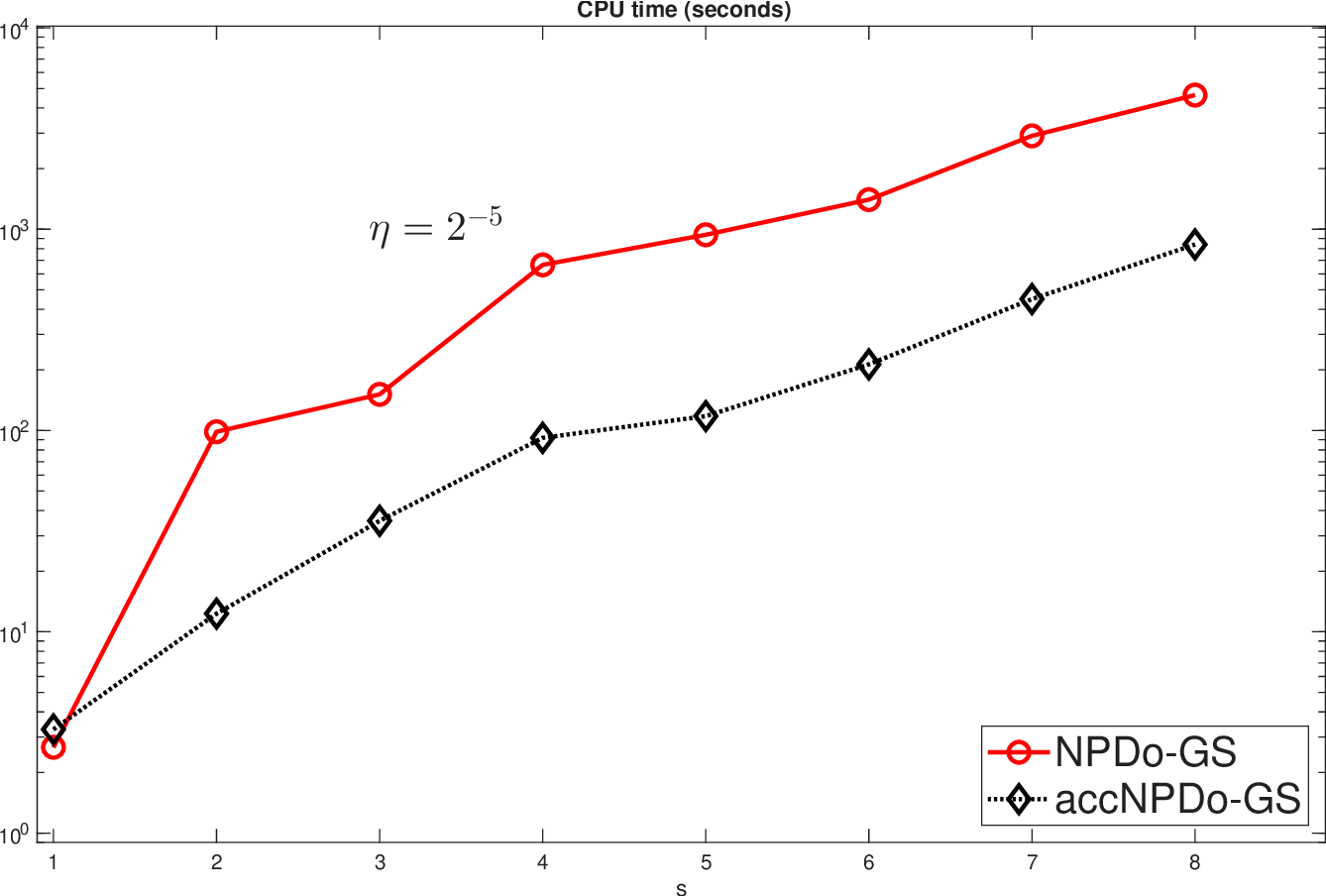}}
  &\resizebox*{0.31\textwidth}{0.17\textheight}{\includegraphics{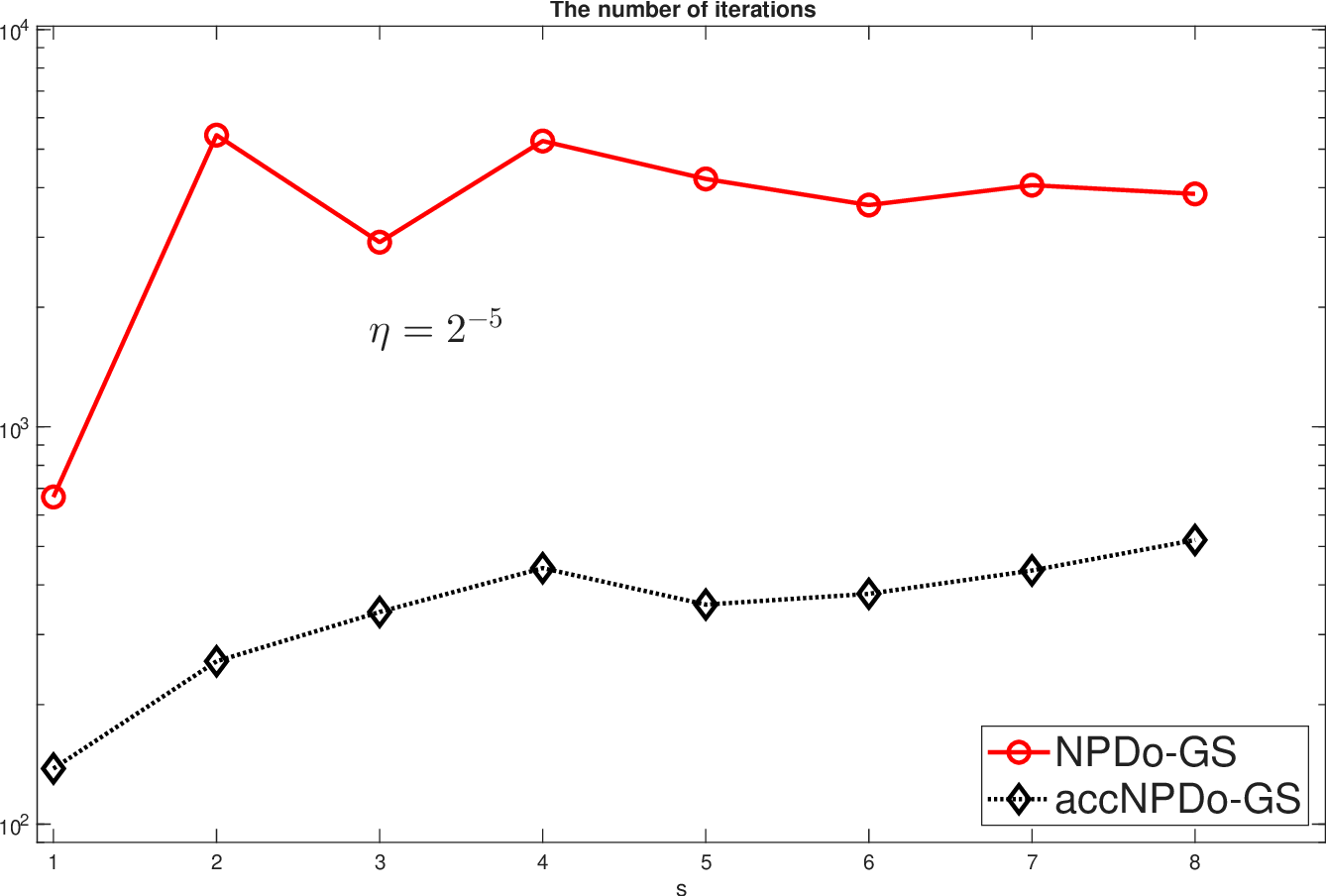}}
\end{tabular}\par
}
\vspace{-0.15 cm}
\caption{\small \ptsvd: scalability of NPDo and accNPDo on real tensors with $[n_1,n_2,n_3]$ varies as in \eqref{eq:tensor-sizes}
    for $1\le s\le 8$ and $k=10$. {\em Left panel:\/} KKT residual $\tilde\epsilon_{\KKT,j}$,
    {\em Middle panel:\/} CPU time, and {\em Right panel:\/} the number of iterations, while
    $\eta=10^{-3}, 10^{-4}, 10^{-5}$ for the first, second, and third row, respectively.
  }
\label{fig:scale-real-PTSVD}
\end{figure}

For this experiment, we run tests on randomly generated tensors without saving them
and thus we can go for tensors of larger sizes than the ones in the previous subsection.
We  let
\begin{equation}\label{eq:tensor-sizes}
[n_1,n_2,n_3]=s\cdot [100, 110, 120]\quad\mbox{for $s=1,2,\ldots, 8$}.
\end{equation}
The last few $s$ yield larger tensors.
We plot the final KKT residual $\tilde\epsilon_{\KKT,j}$, CPU time, and the number of iterations against $s$ varying from 1 to 8, for real and complex tensors, respectively, for \ptsvd\ in \Cref{fig:scale-real-PTSVD,fig:scale-cmpx-PTSVD}  and
for \ptbd\ in \Cref{fig:scale-real-PTBD,fig:scale-cmpx-PTBD}. We summarize our observations
as follows:
\begin{enumerate}[(1)]
  \item Both methods converge with final KKT residual $\tilde\epsilon_{\KKT,j}$ about $10^{-9}$ or smaller,
        and as $\eta$ gets smaller, the random \ptsvd\ problems become easier for both methods in terms of the numbers of iterations required and consumed CPU time.
  \item CPU time seems to grow linearly with respect to $s$.
  \item accNPDo is faster than NPDo.
\end{enumerate}

\begin{figure}[t]
{\centering
\begin{tabular}{ccc}
  \resizebox*{0.31\textwidth}{0.17\textheight}{\includegraphics{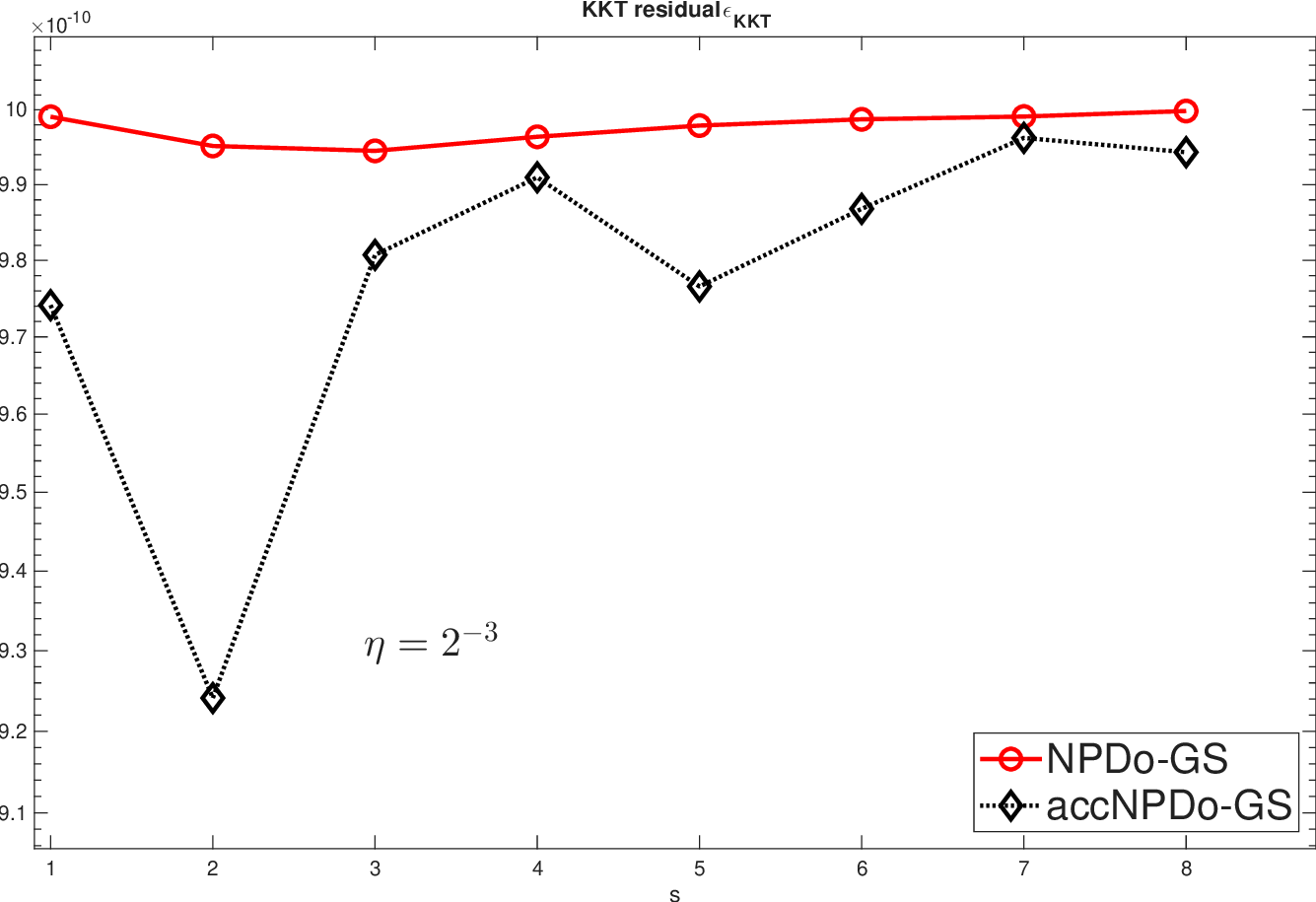}}
  &\resizebox*{0.31\textwidth}{0.17\textheight}{\includegraphics{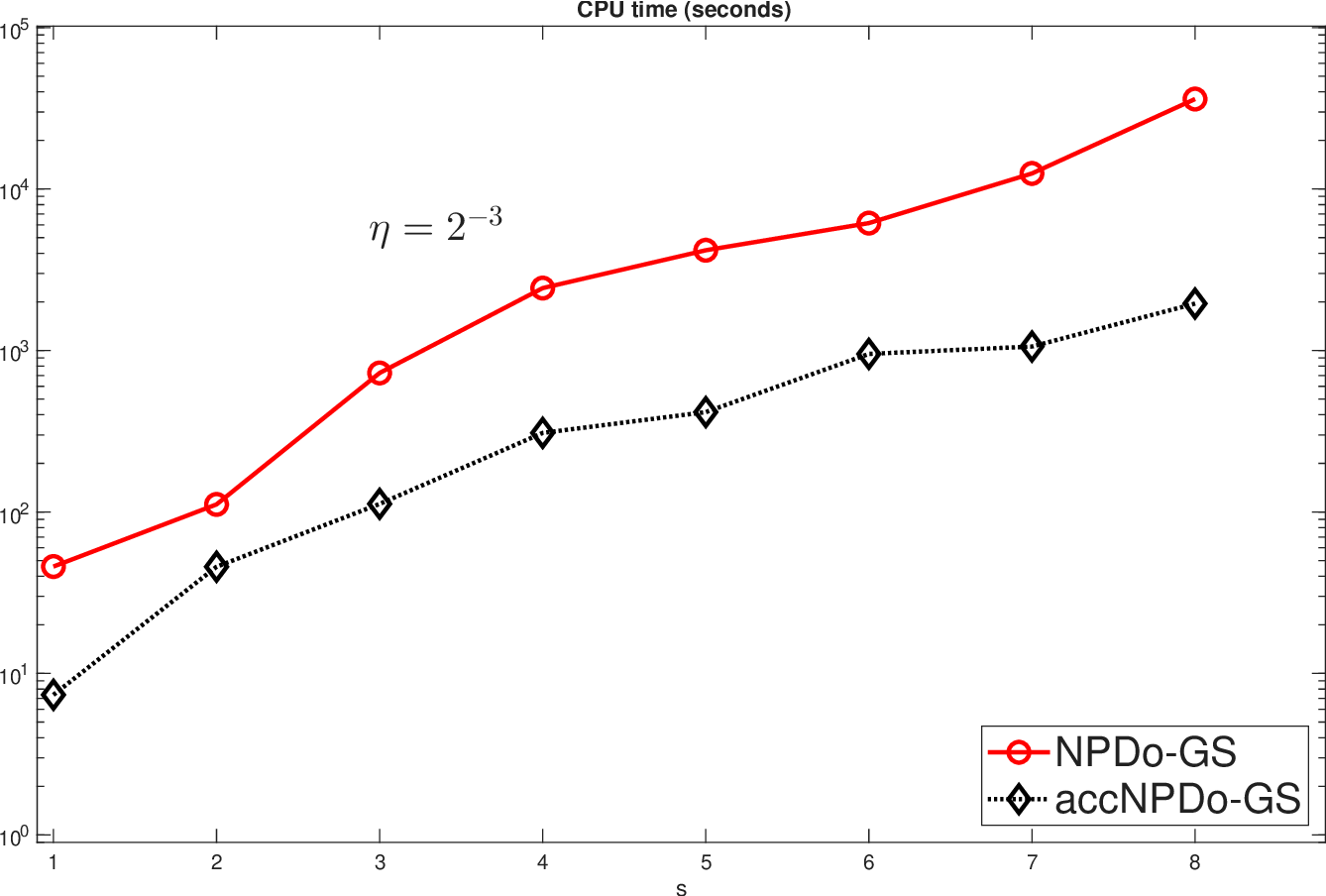}}
  &\resizebox*{0.31\textwidth}{0.17\textheight}{\includegraphics{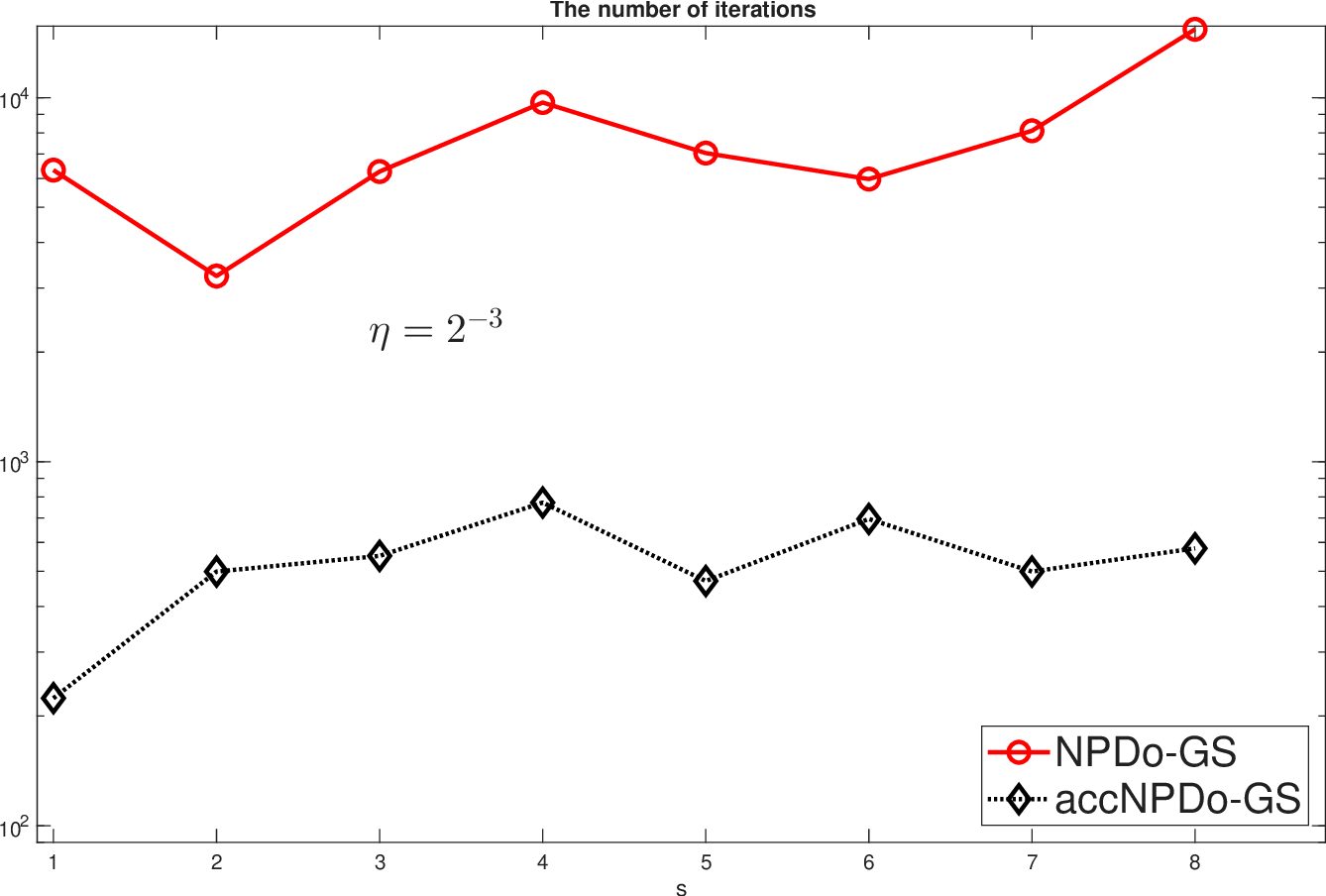}} \\
%
  \resizebox*{0.31\textwidth}{0.17\textheight}{\includegraphics{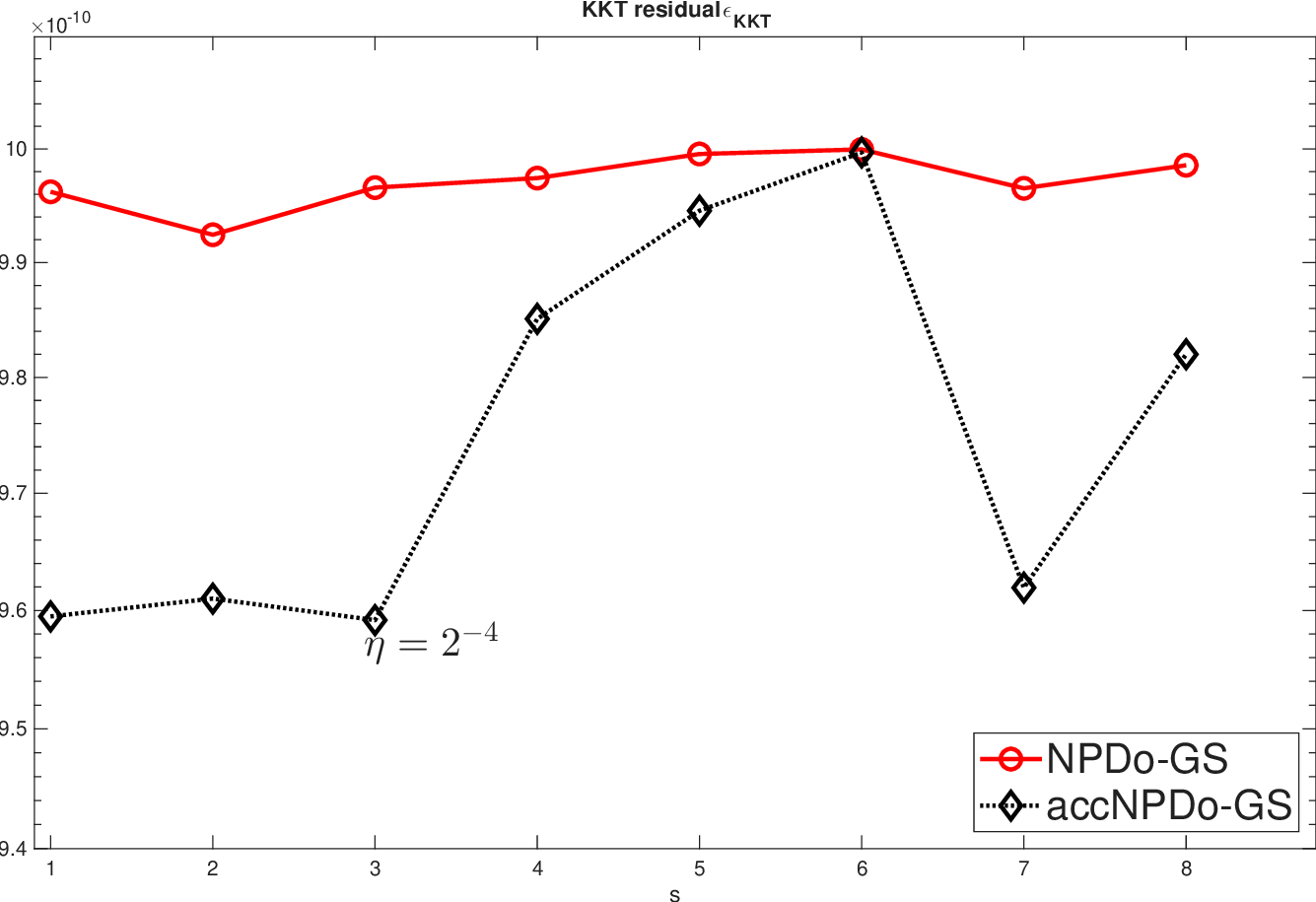}}
  &\resizebox*{0.31\textwidth}{0.17\textheight}{\includegraphics{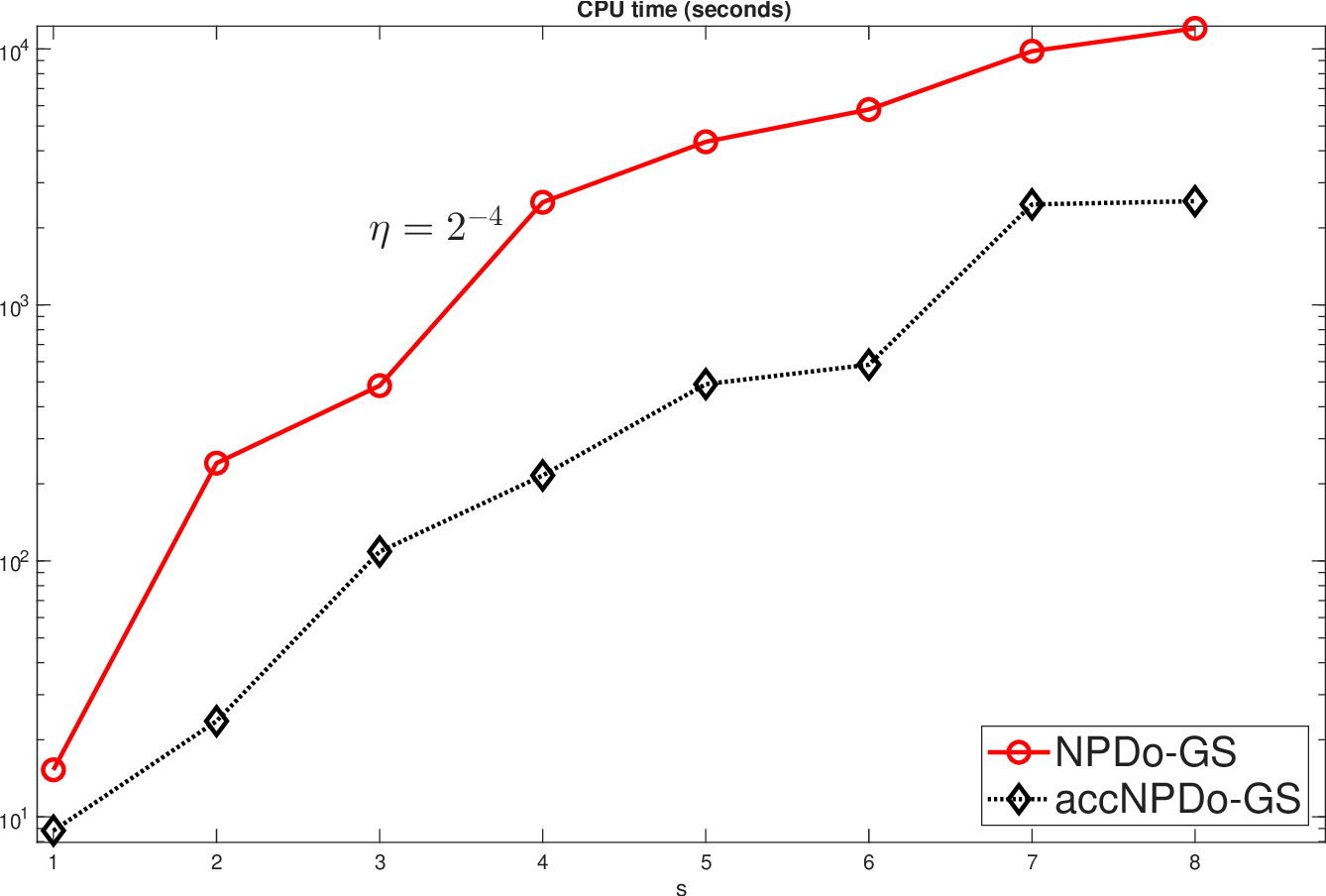}}
  &\resizebox*{0.31\textwidth}{0.17\textheight}{\includegraphics{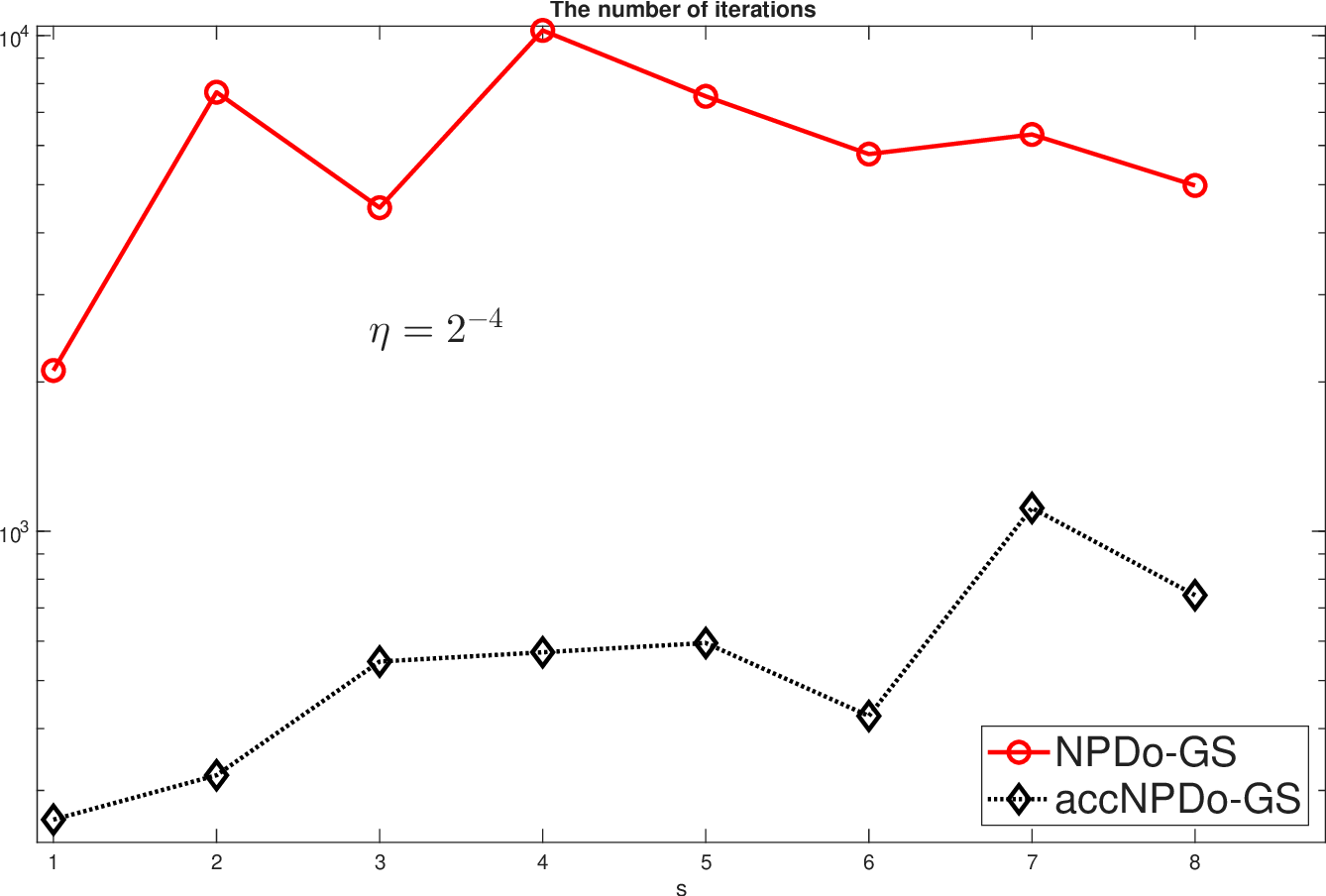}} \\
%
  \resizebox*{0.31\textwidth}{0.17\textheight}{\includegraphics{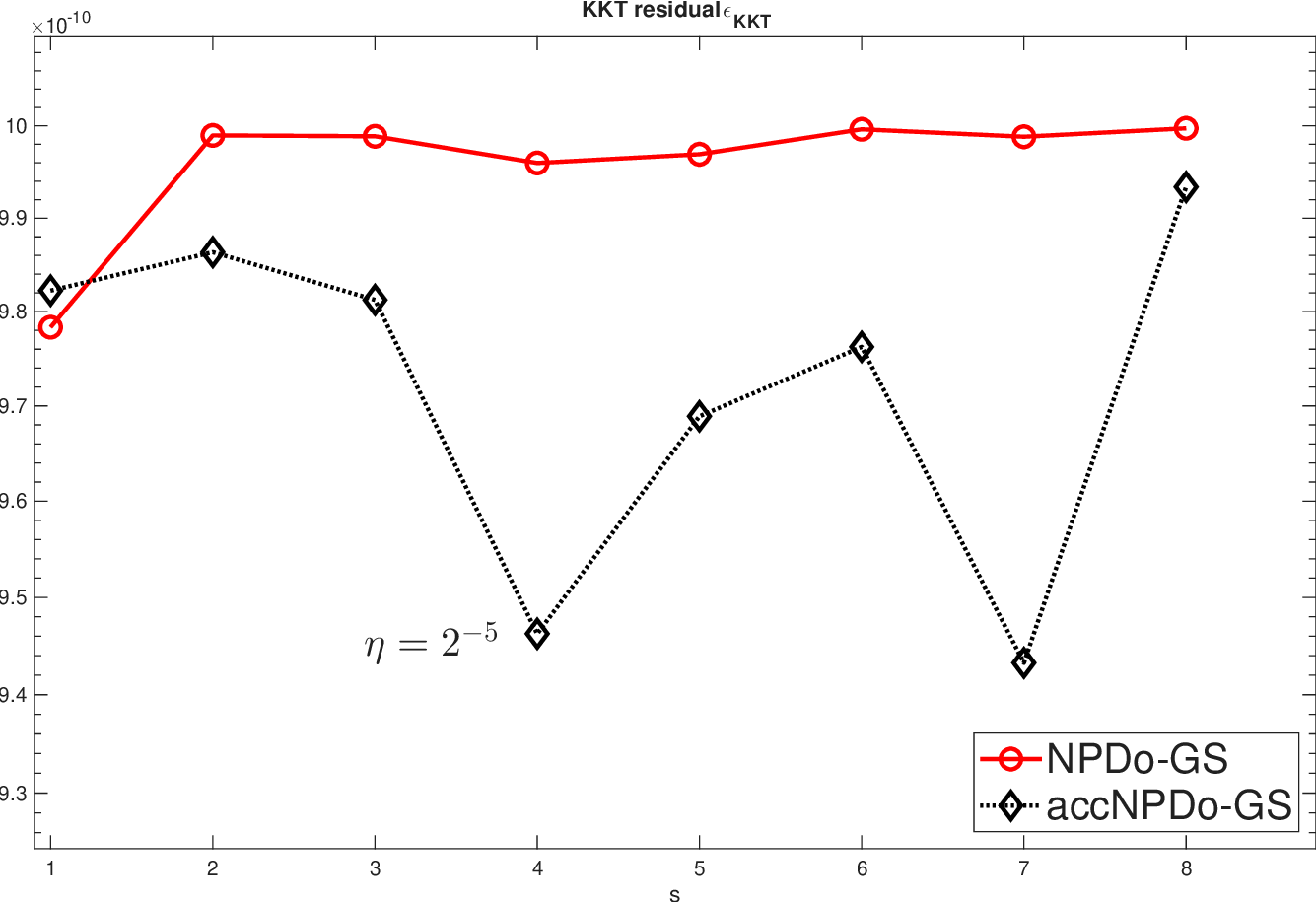}}
  &\resizebox*{0.31\textwidth}{0.17\textheight}{\includegraphics{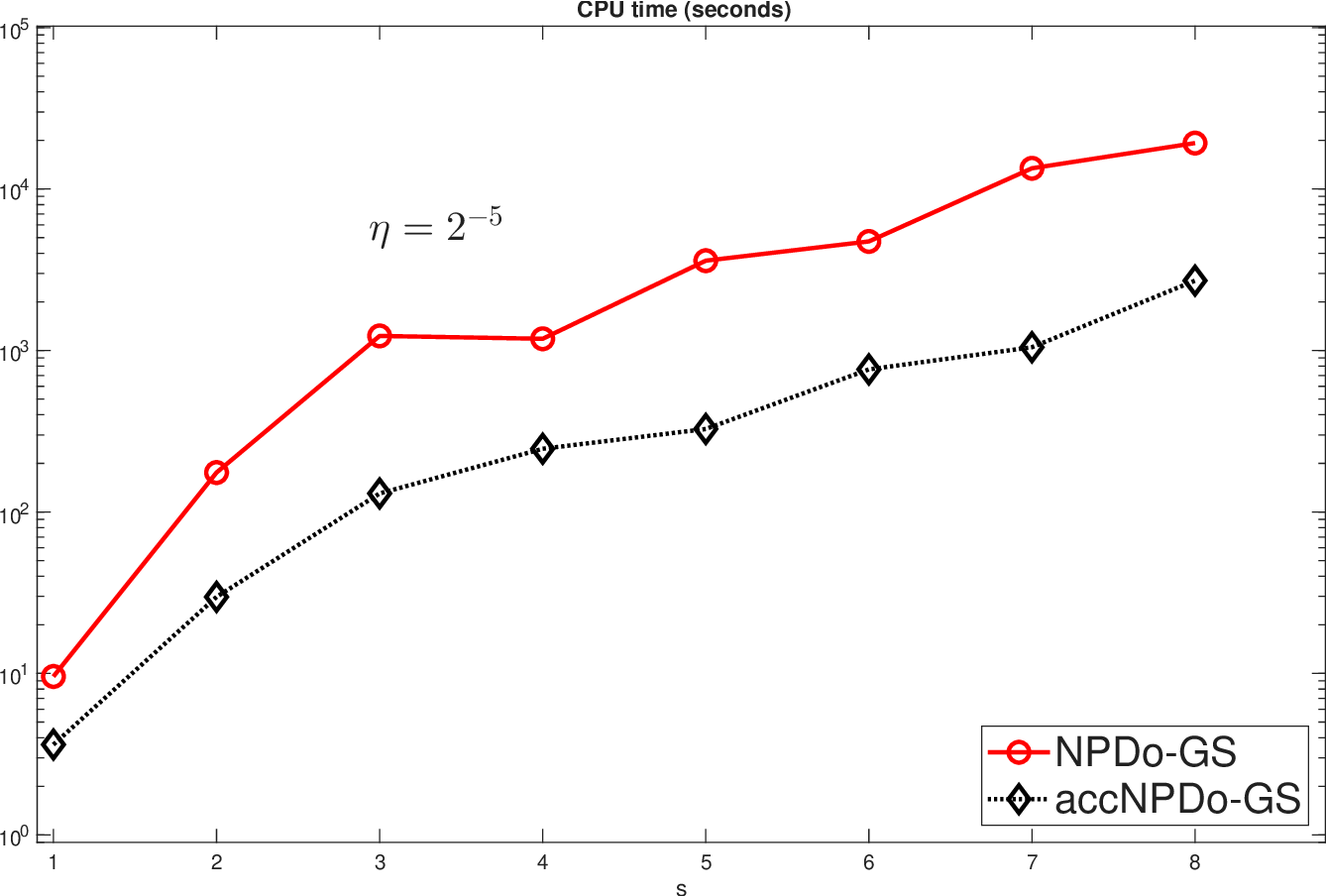}}
  &\resizebox*{0.31\textwidth}{0.17\textheight}{\includegraphics{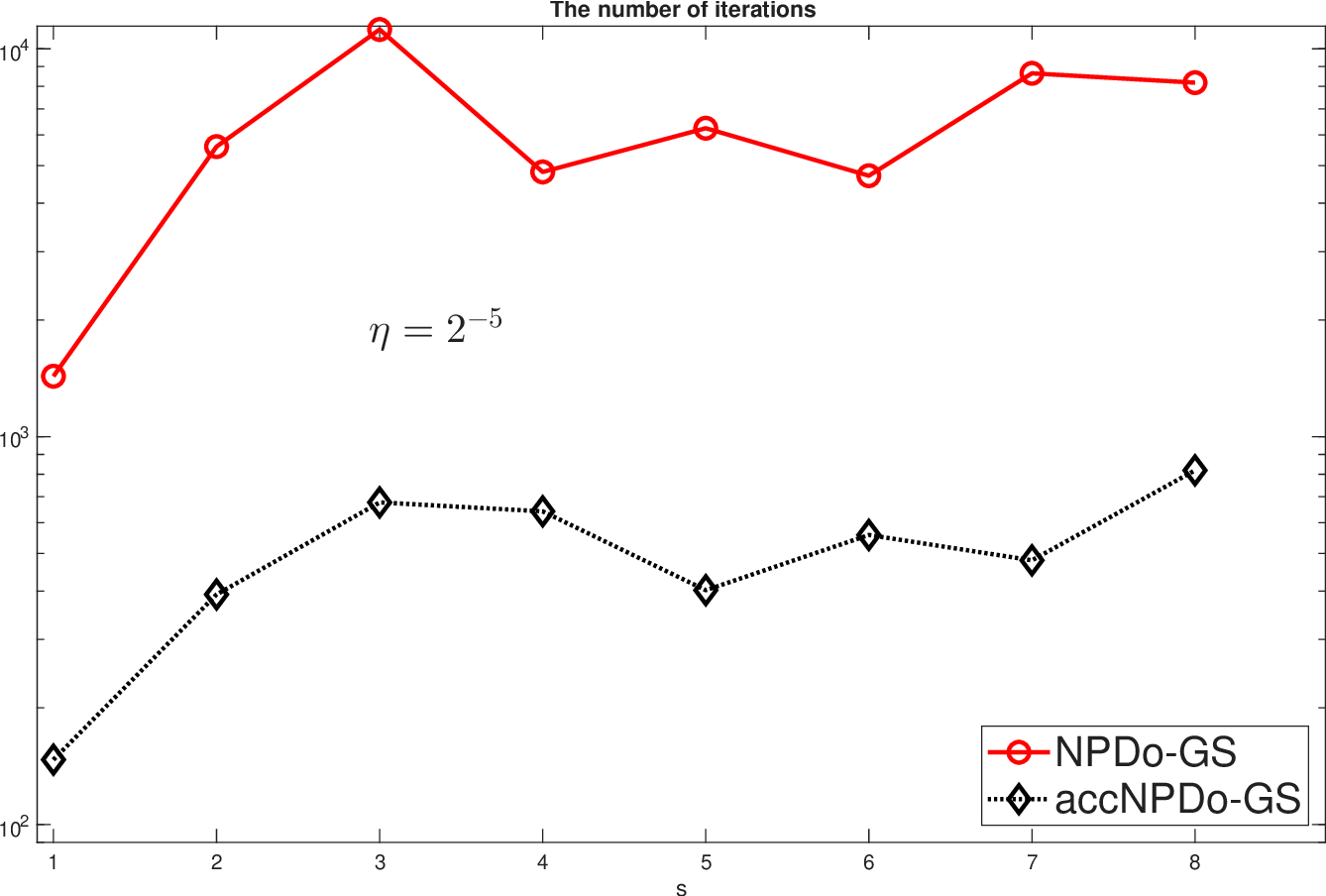}}
\end{tabular}\par
}
\vspace{-0.15 cm}
\caption{\small \ptsvd: scalability of NPDo and accNPDo on complex tensors with $[n_1,n_2,n_3]$ varies as in \eqref{eq:tensor-sizes} for $1\le s\le 8$ and $k=10$. {\em Left panel:\/} KKT residual $\tilde\epsilon_{\KKT,j}$,
    {\em Middle panel:\/} CPU time, and {\em Right panel:\/} the number of iterations, while
    $\eta=10^{-3}, 10^{-4}, 10^{-5}$ for the first, second, and third row, respectively.
  }
\label{fig:scale-cmpx-PTSVD}
\end{figure}

\begin{figure}[t]
{\centering
\begin{tabular}{ccc}
  \resizebox*{0.31\textwidth}{0.17\textheight}{\includegraphics{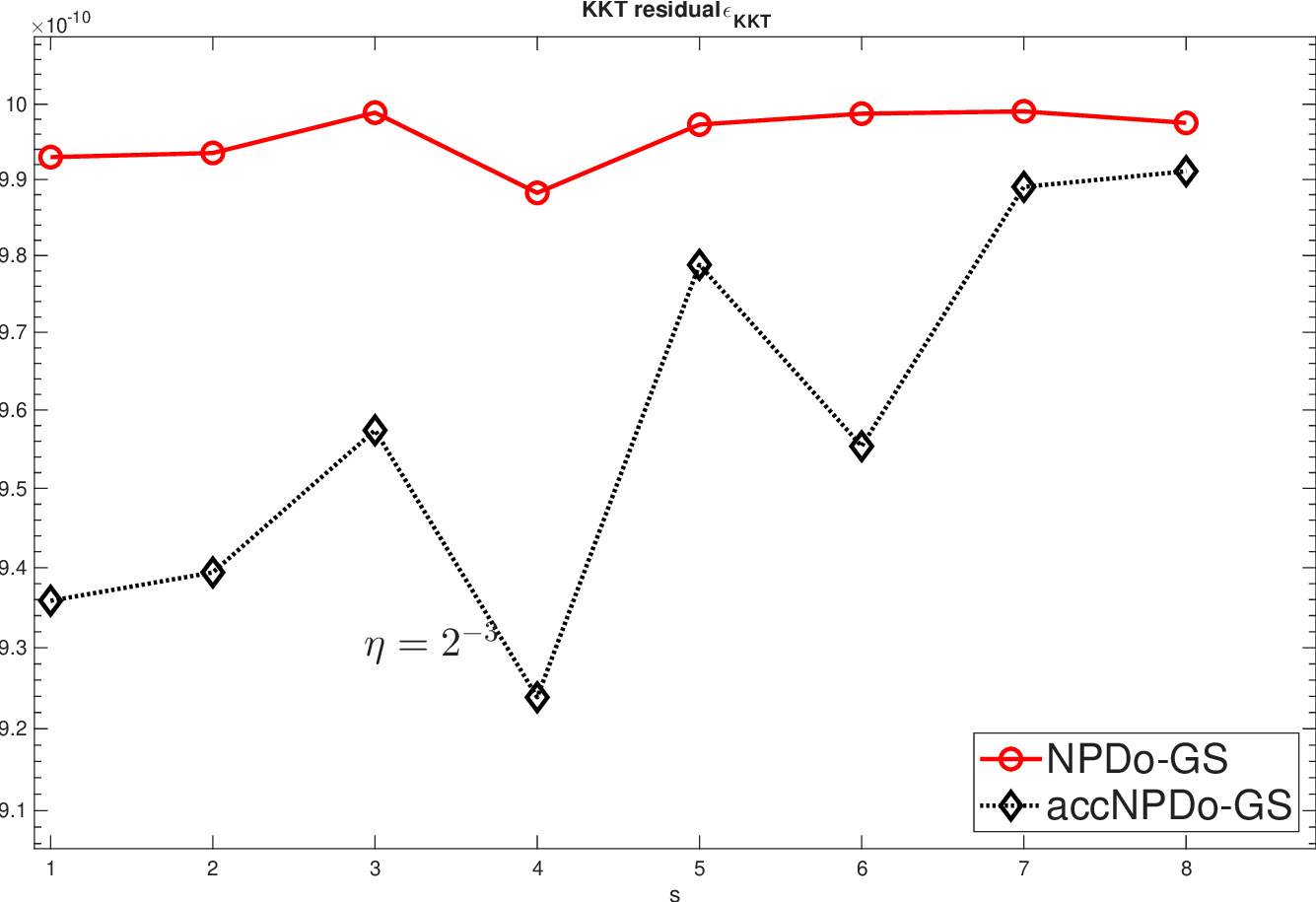}}
  &\resizebox*{0.31\textwidth}{0.17\textheight}{\includegraphics{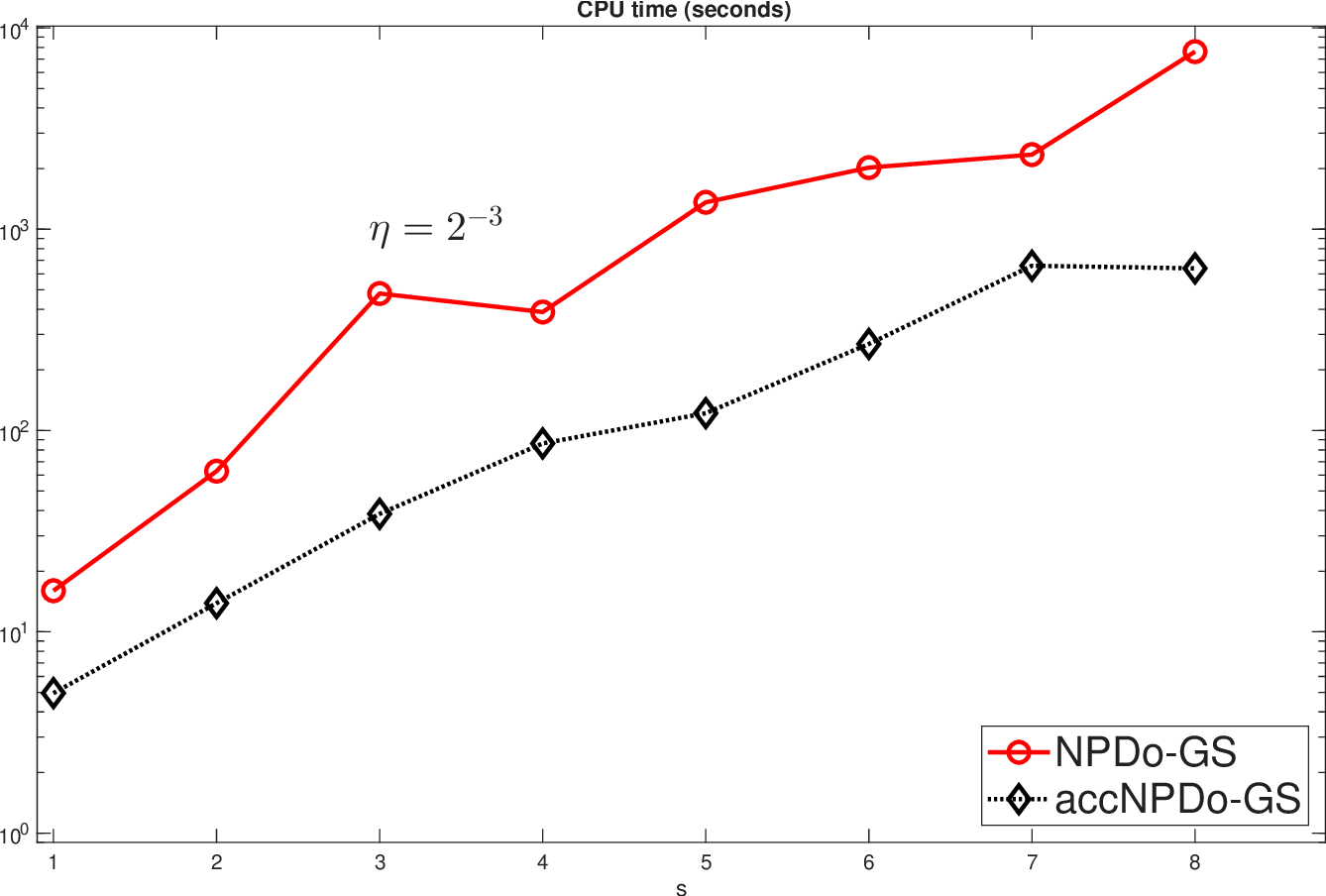}}
  &\resizebox*{0.31\textwidth}{0.17\textheight}{\includegraphics{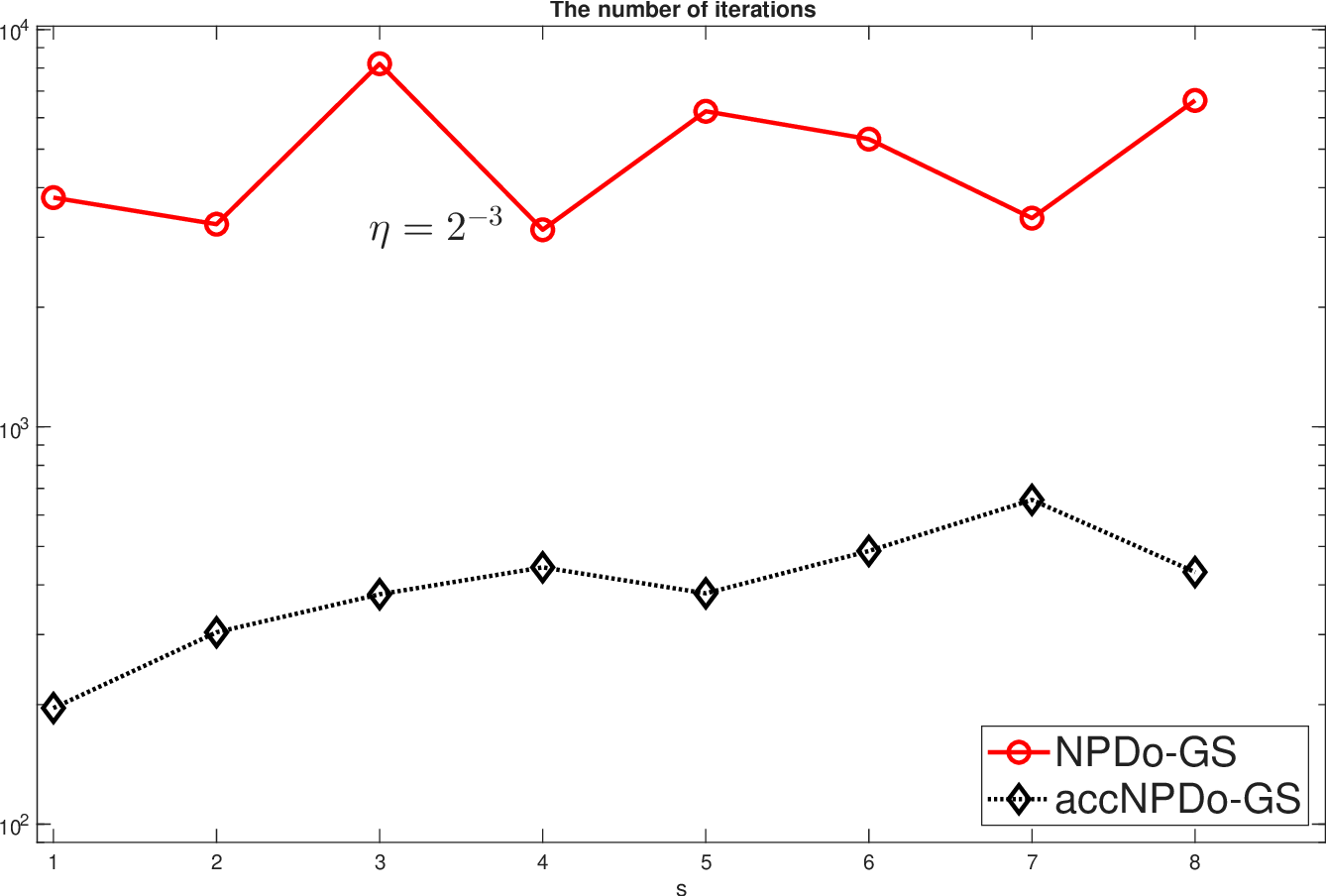}} \\
%
  \resizebox*{0.31\textwidth}{0.17\textheight}{\includegraphics{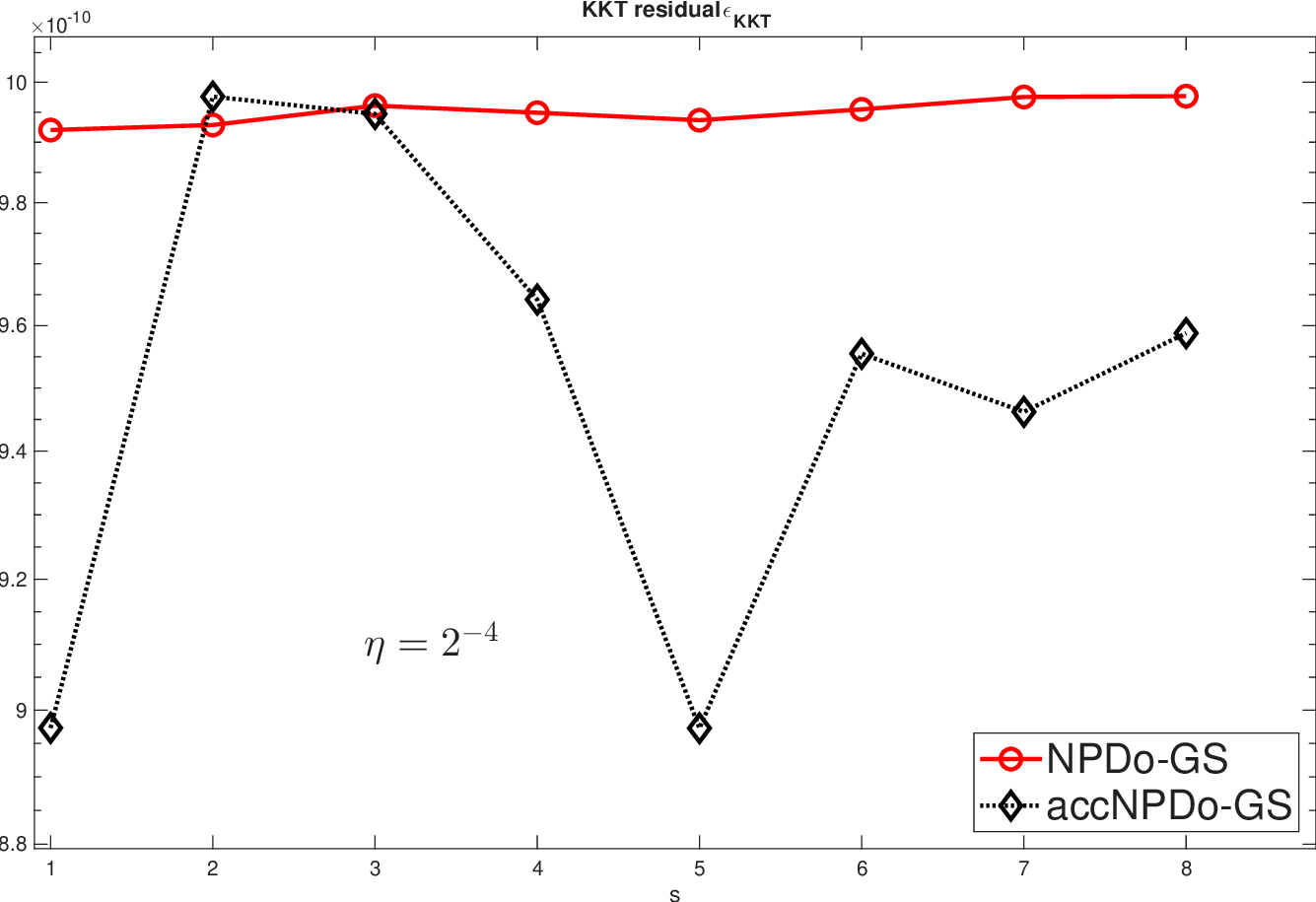}}
  &\resizebox*{0.31\textwidth}{0.17\textheight}{\includegraphics{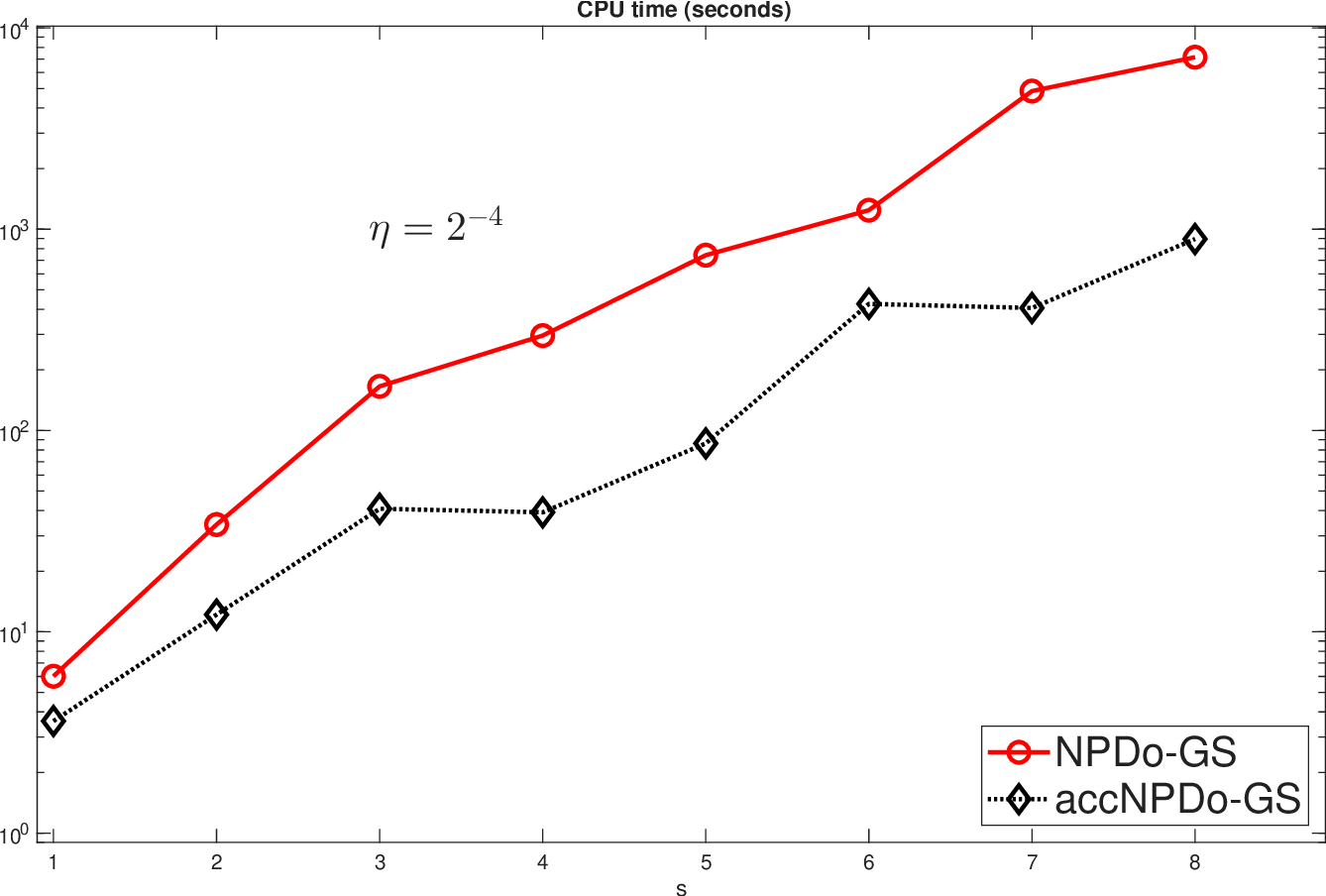}}
  &\resizebox*{0.31\textwidth}{0.17\textheight}{\includegraphics{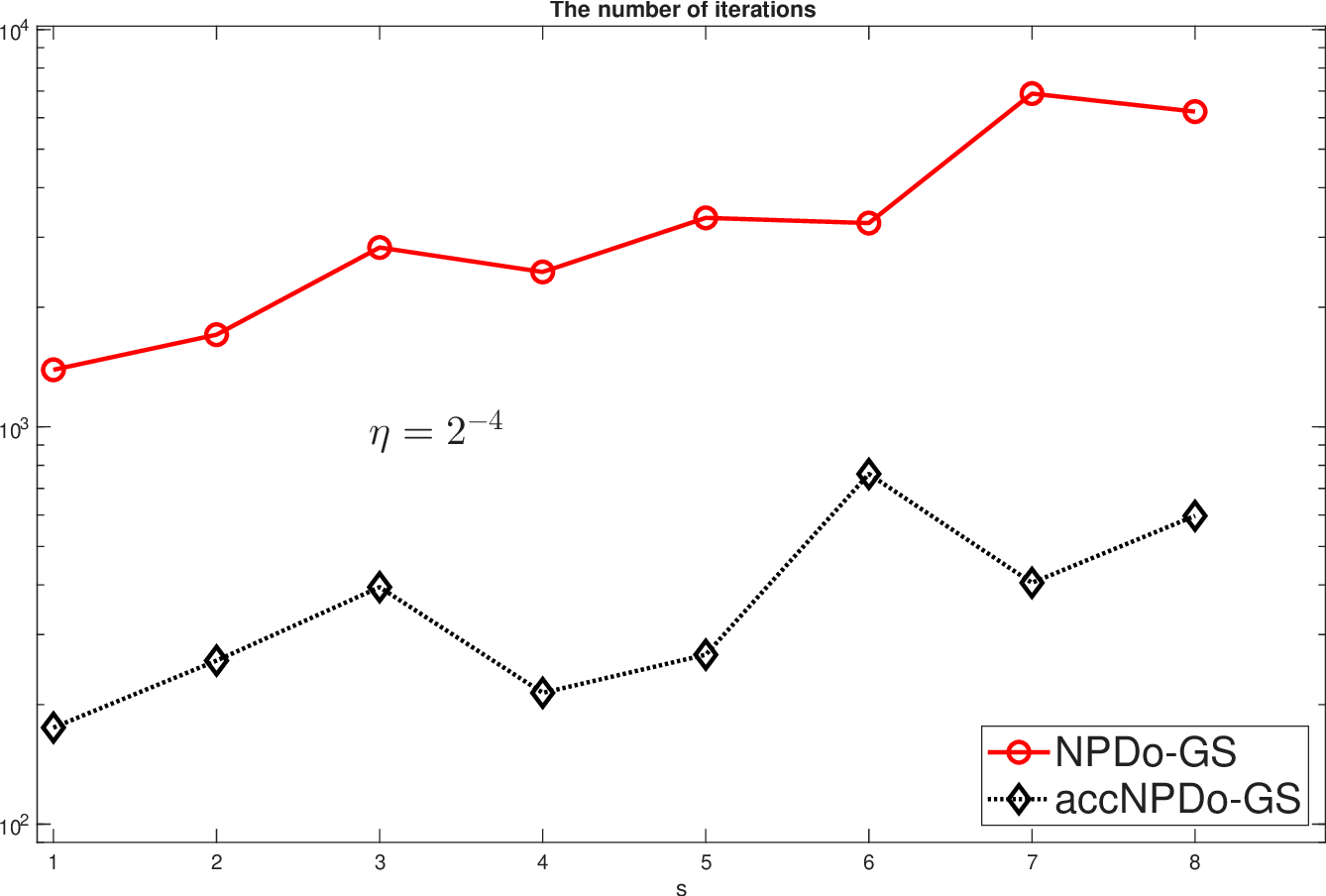}} \\
%
  \resizebox*{0.31\textwidth}{0.17\textheight}{\includegraphics{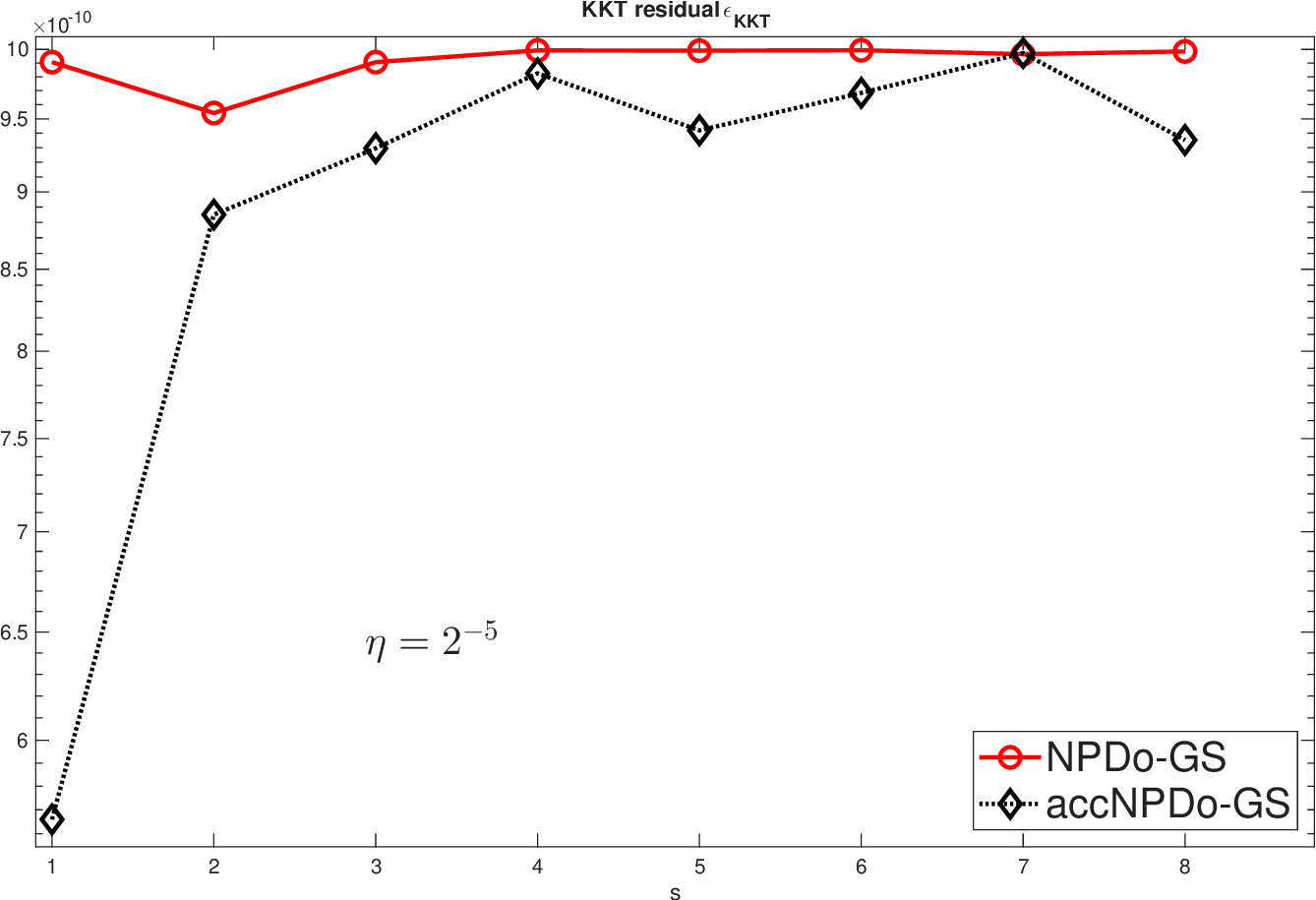}}
  &\resizebox*{0.31\textwidth}{0.17\textheight}{\includegraphics{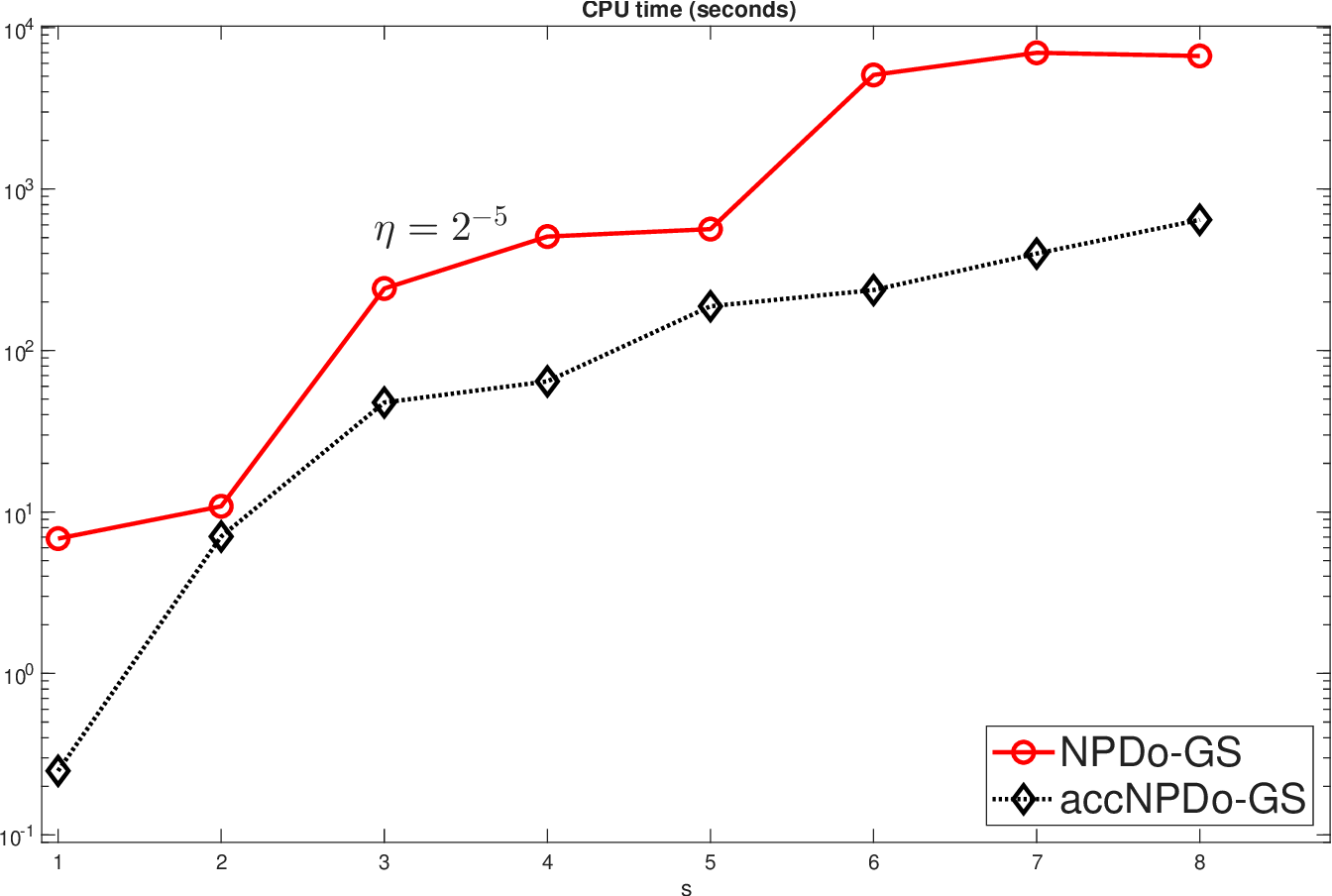}}
  &\resizebox*{0.31\textwidth}{0.17\textheight}{\includegraphics{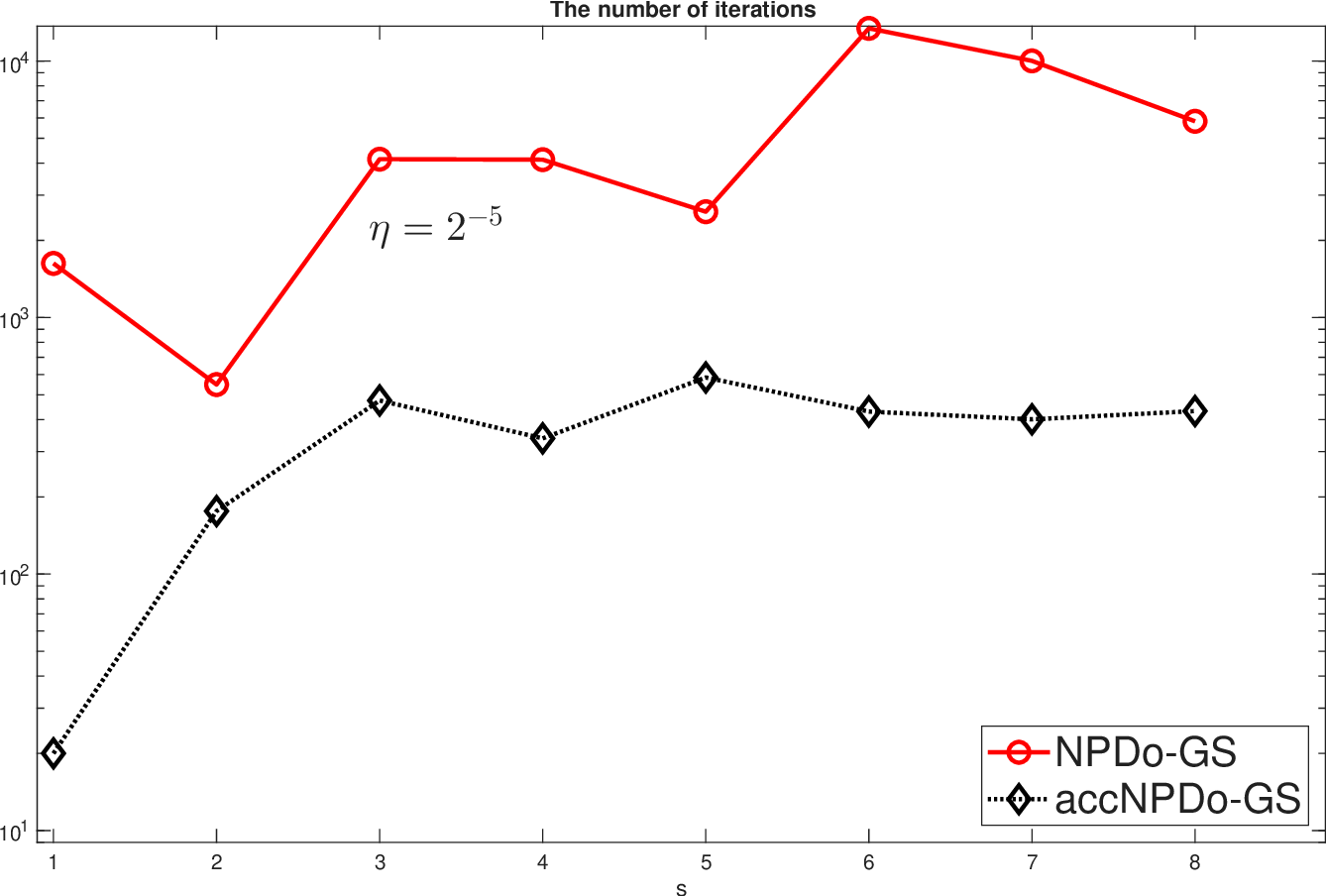}}
\end{tabular}\par
}
\vspace{-0.15 cm}
\caption{\small \ptbd: scalability of NPDo and accNPDo on real tensors with $[n_1,n_2,n_3]$ varies as in \eqref{eq:tensor-sizes}
    for $1\le s\le 8$, $[k_1,k_2,k_3]=[8,12,8]$, $t=4$, and each diagonal block $T_{jjj}$ is $2\times 3\times 2$. {\em Left panel:\/} KKT residual $\tilde\epsilon_{\KKT,j}$,
    {\em Middle panel:\/} CPU time, and {\em Right panel:\/} the number of iterations, while
    $\eta=10^{-3}, 10^{-4}, 10^{-5}$ for the first, second, and third row, respectively.
  }
\label{fig:scale-real-PTBD}
\end{figure}

\begin{figure}[t]
{\centering
\begin{tabular}{ccc}
  \resizebox*{0.31\textwidth}{0.17\textheight}{\includegraphics{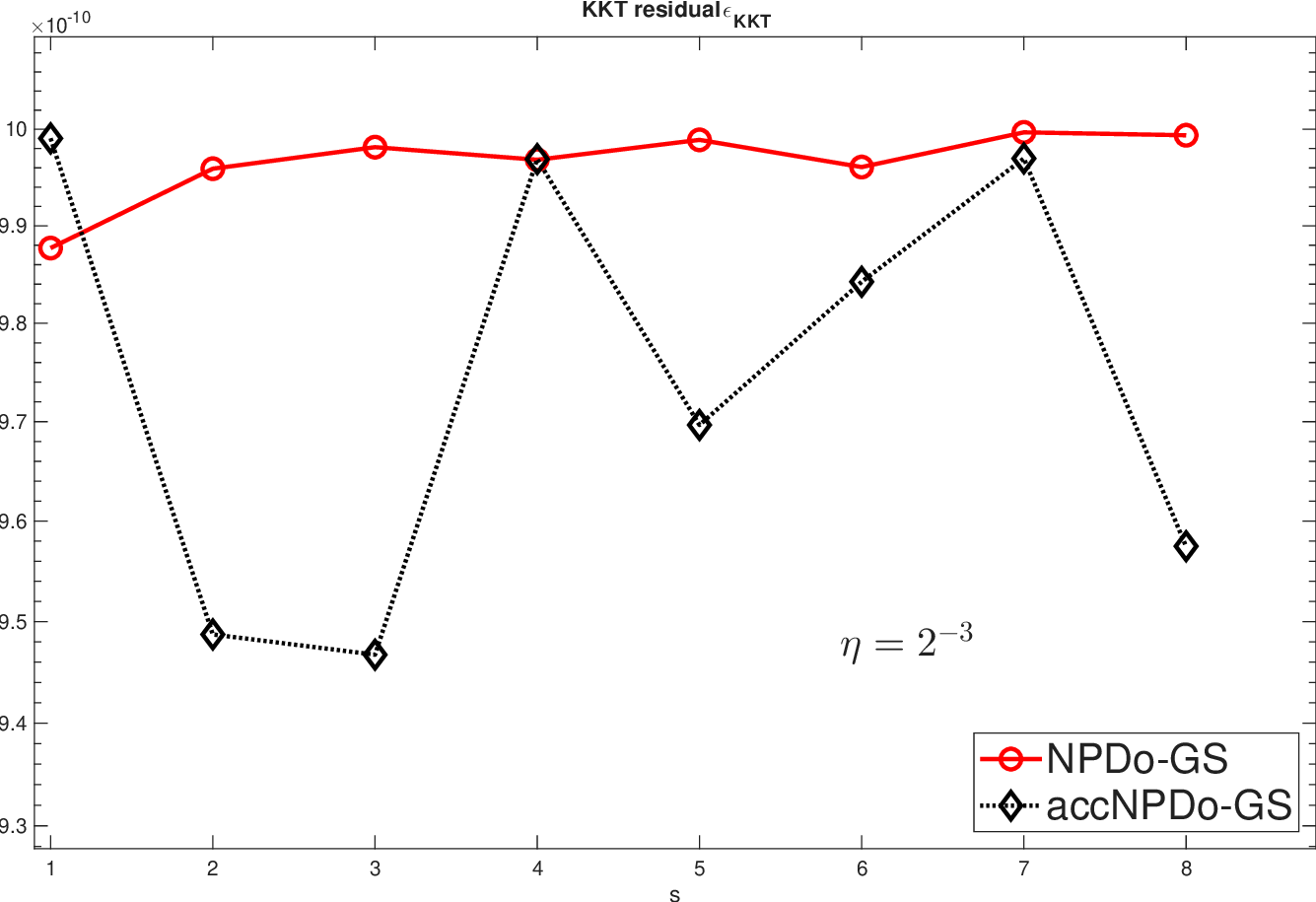}}
  &\resizebox*{0.31\textwidth}{0.17\textheight}{\includegraphics{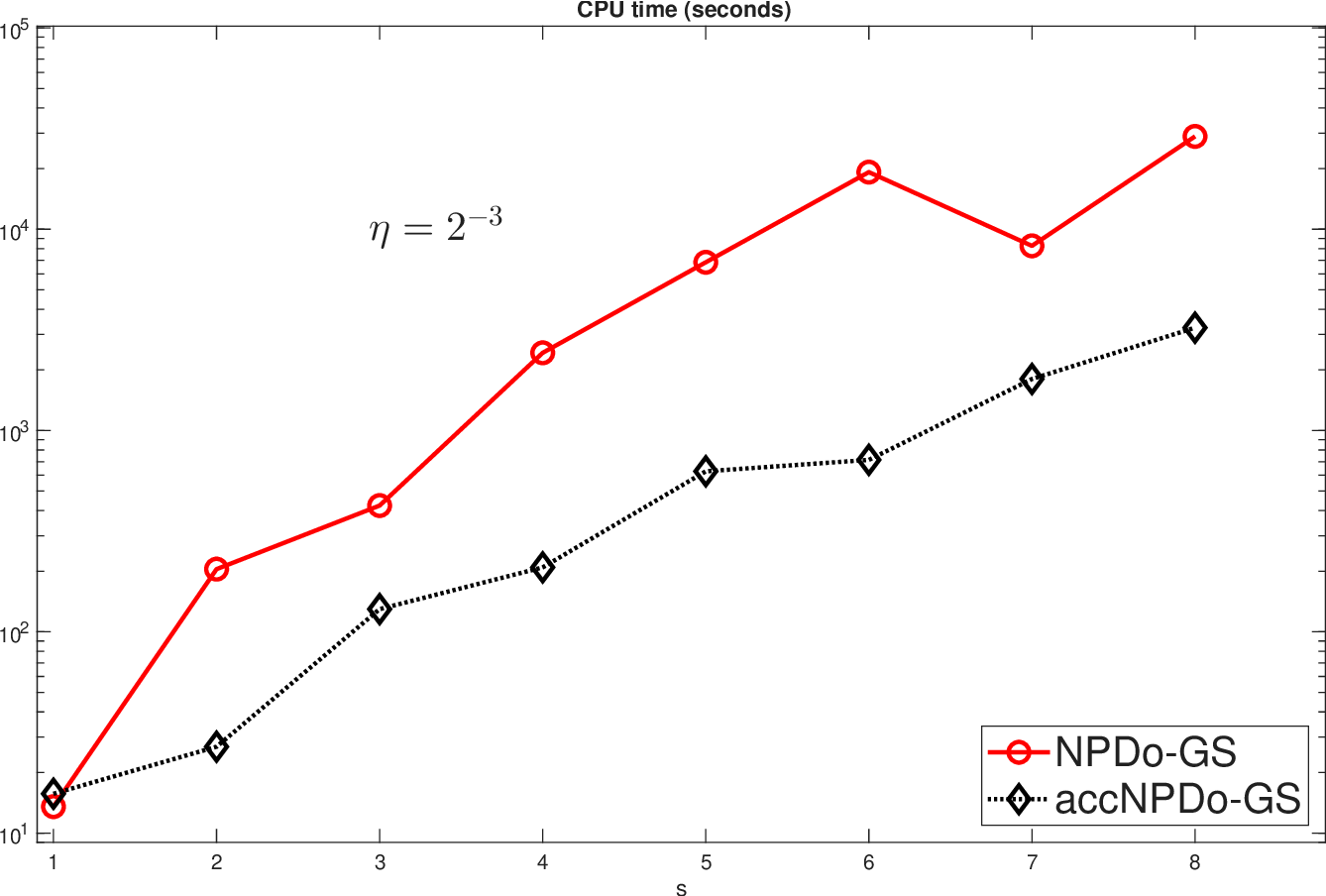}}
  &\resizebox*{0.31\textwidth}{0.17\textheight}{\includegraphics{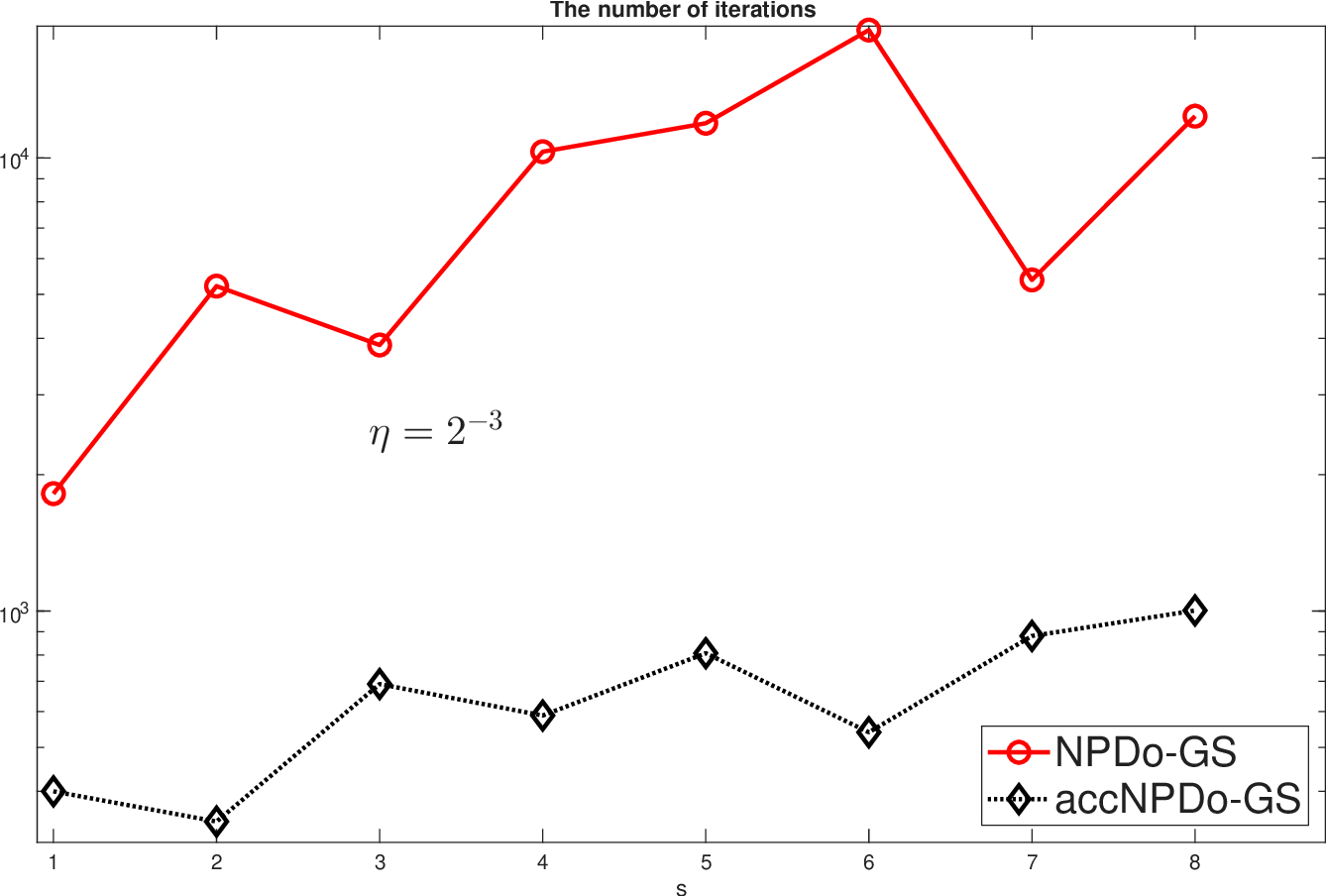}} \\
%
  \resizebox*{0.31\textwidth}{0.17\textheight}{\includegraphics{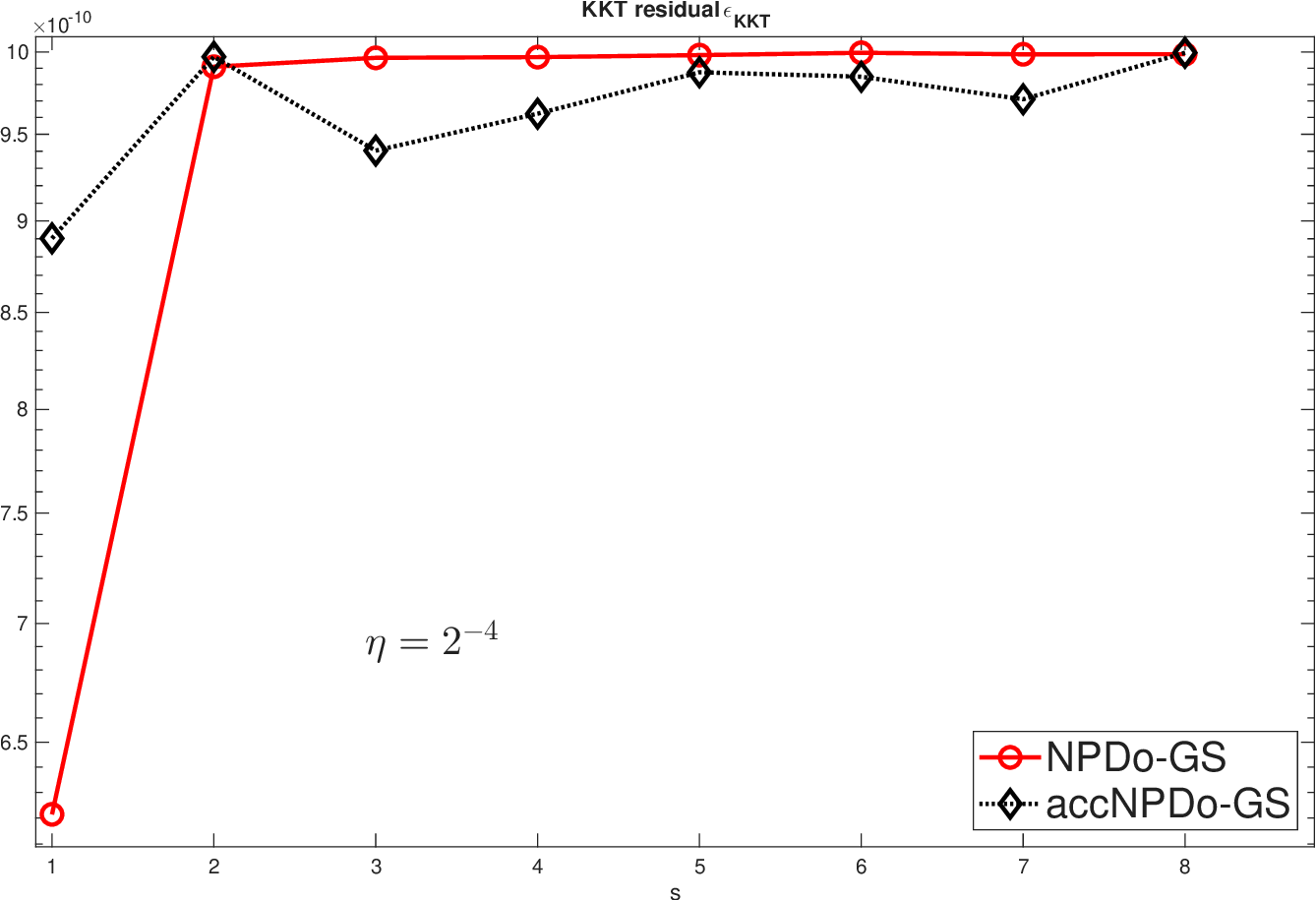}}
  &\resizebox*{0.31\textwidth}{0.17\textheight}{\includegraphics{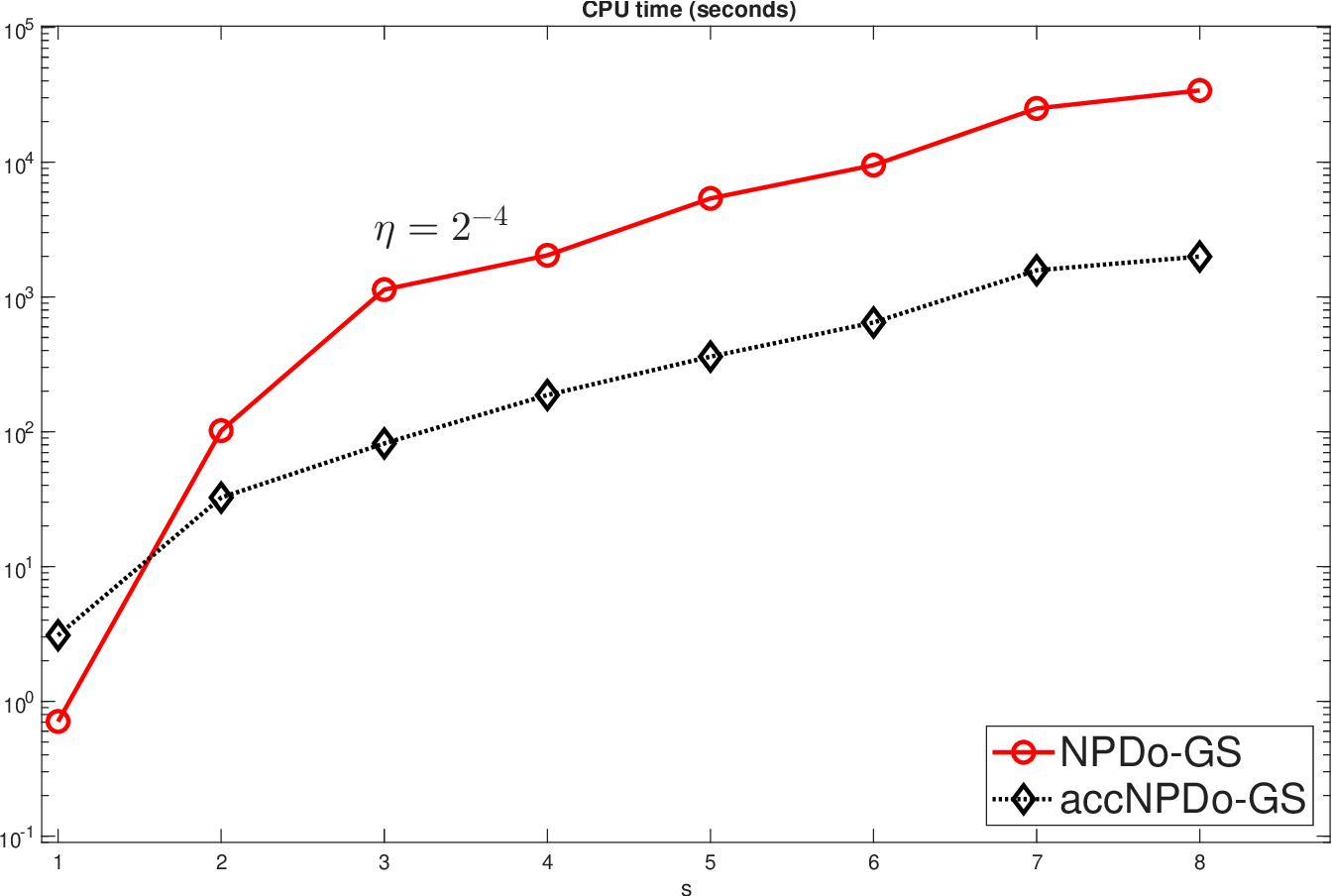}}
  &\resizebox*{0.31\textwidth}{0.17\textheight}{\includegraphics{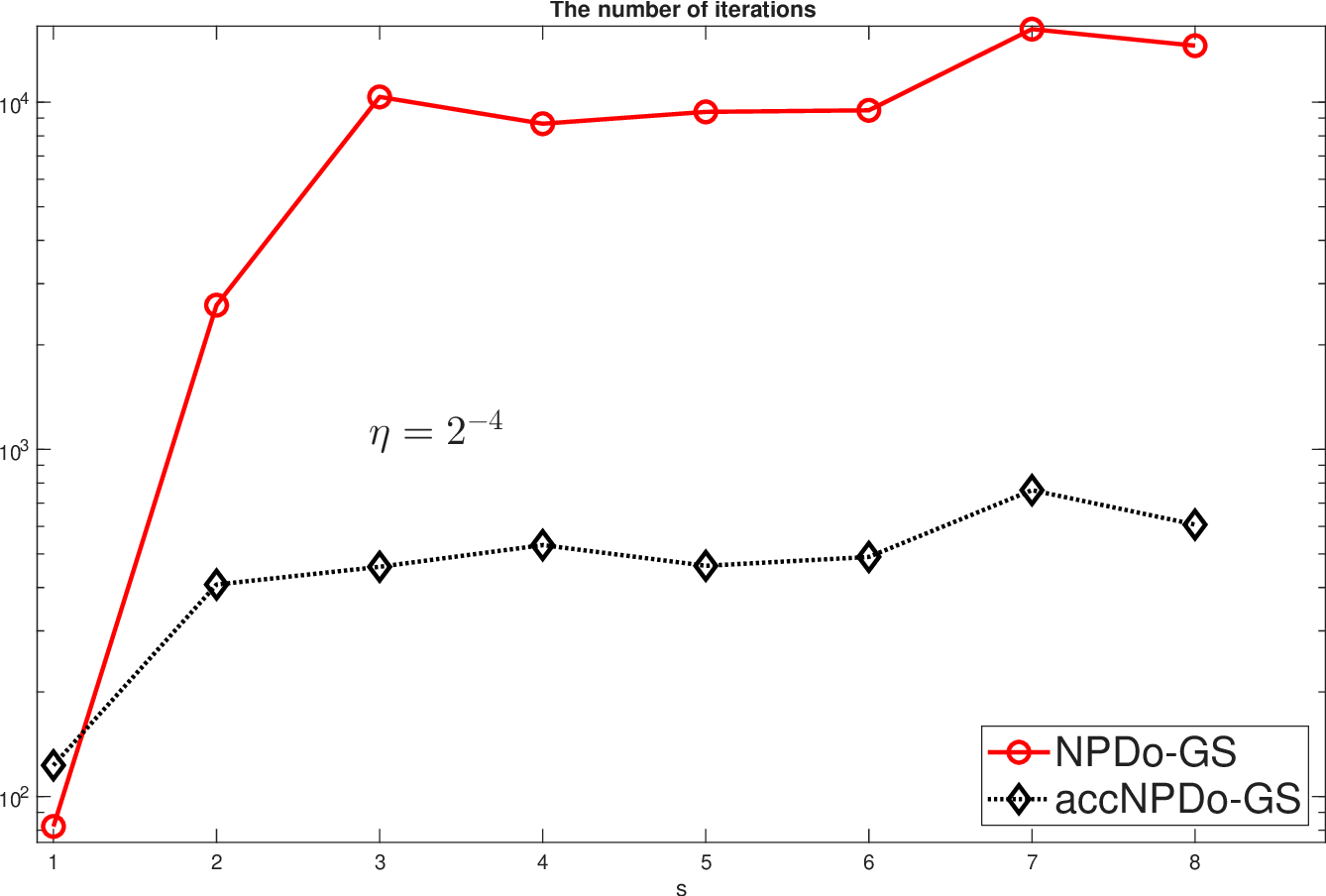}} \\
%
  \resizebox*{0.31\textwidth}{0.17\textheight}{\includegraphics{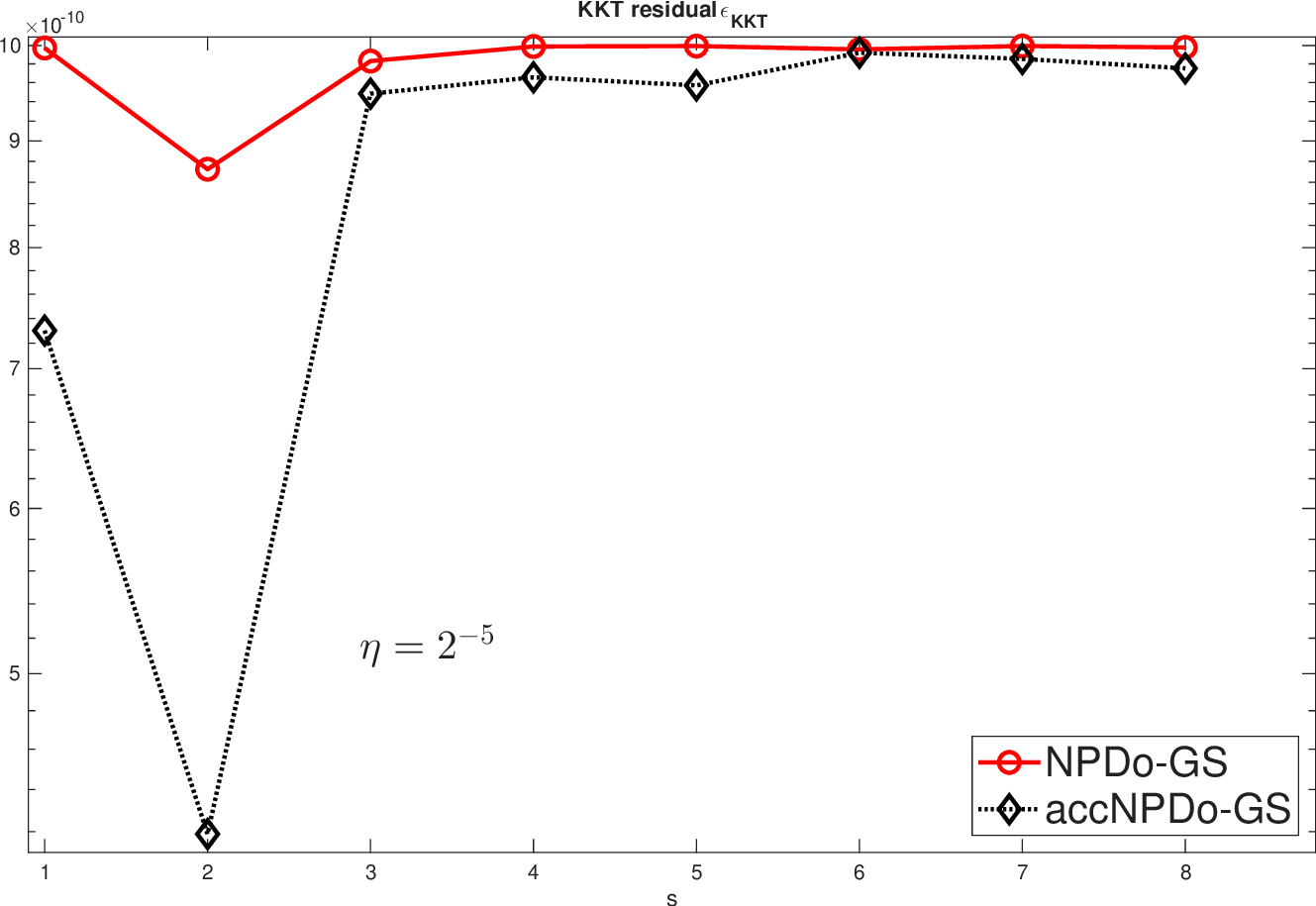}}
  &\resizebox*{0.31\textwidth}{0.17\textheight}{\includegraphics{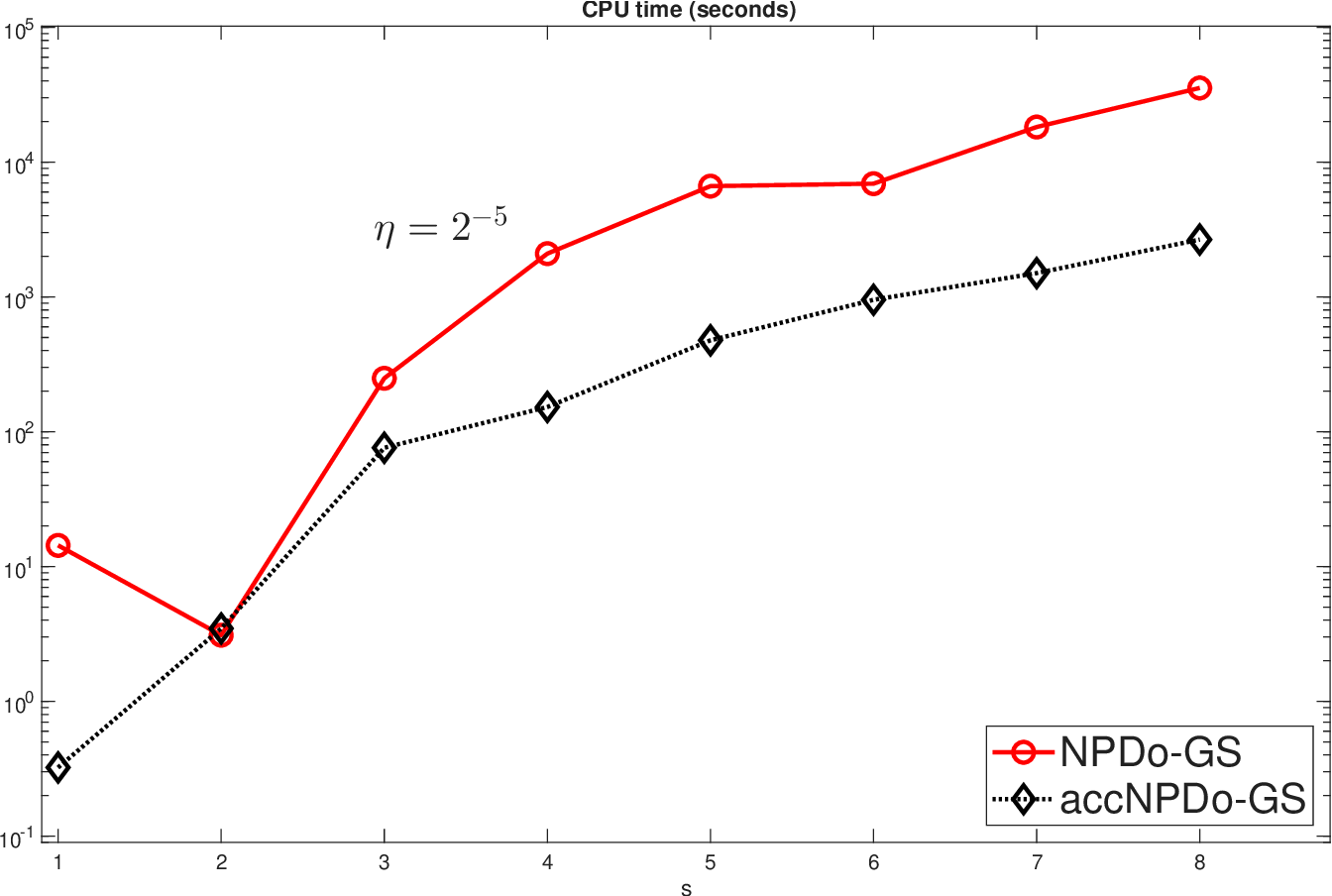}}
  &\resizebox*{0.31\textwidth}{0.17\textheight}{\includegraphics{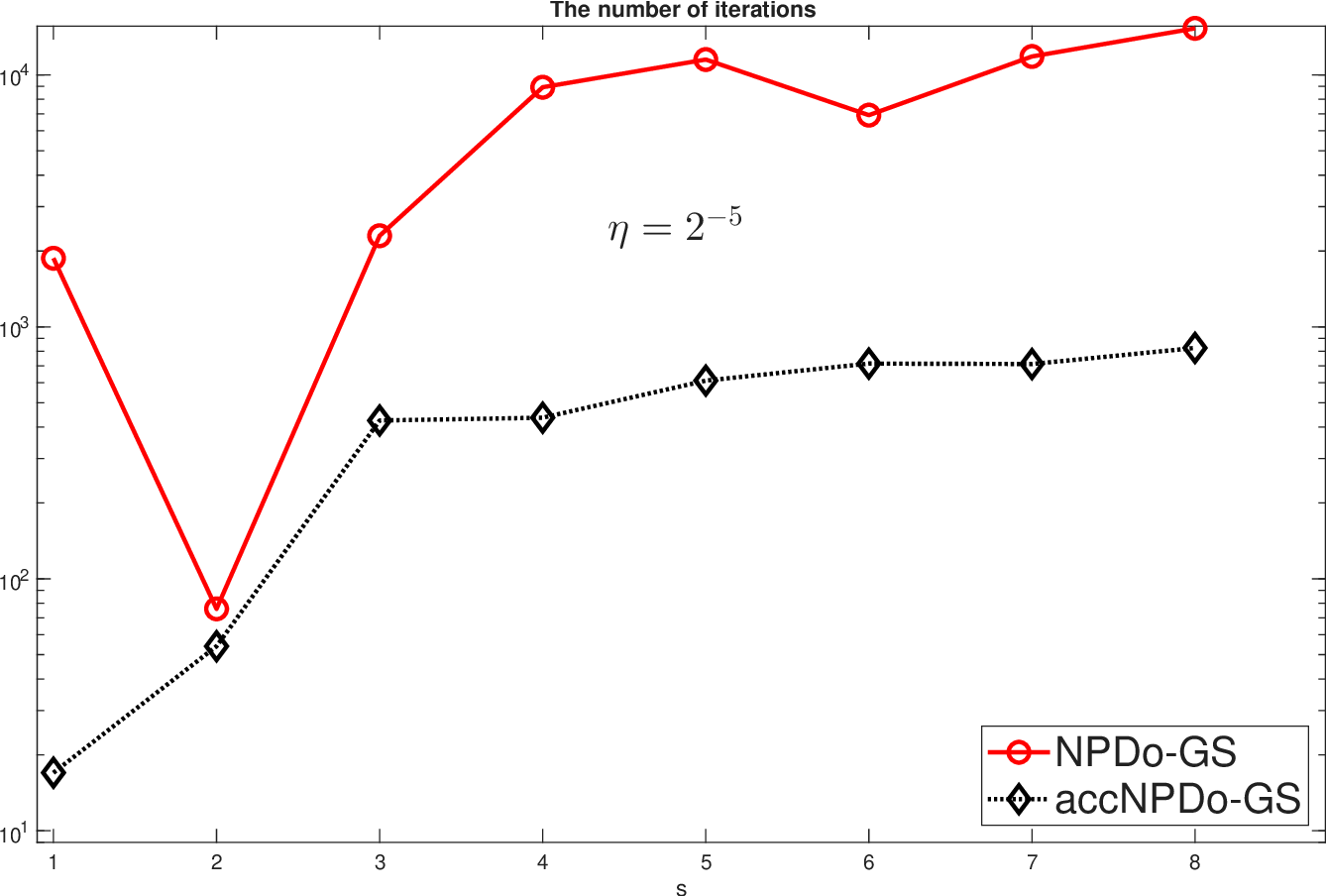}}
\end{tabular}\par
}
\vspace{-0.15 cm}
\caption{\small \ptbd: scalability of NPDo and accNPDo on complex tensors with $[n_1,n_2,n_3]$ varies as in \eqref{eq:tensor-sizes} for $1\le s\le 8$, $[k_1,k_2,k_3]=[8,12,8]$, $t=4$, and each diagonal block $T_{jjj}$ is $2\times 3\times 2$. {\em Left panel:\/} KKT residual $\tilde\epsilon_{\KKT,j}$,
    {\em Middle panel:\/} CPU time, and {\em Right panel:\/} the number of iterations.
  }
\label{fig:scale-cmpx-PTBD}
\end{figure}

\section{Conclusion}\label{sec:concl}
We are interested in the principal tensor block-diagonalization (\ptbd) of a multiway tensor.
Specifically, given $B\equiv[b_{i_1i_2\cdots i_m}]\in\bbC^{n_1\times n_2\times\cdots\times n_m}$, we seek $m$
orthonormal matrices $P_{\ell}\in\bbC^{n_{\ell}\times k_{\ell}}\,\,\forall\ell$ such that
$T=B\times_1 P_1^{\HH}\times_2 P_2^{\HH}\cdots\times_m P_m^{\HH}\in\bbC^{k_1\times k_2\times\cdots\times k_m}$ is optimally approximately block-diagonal with given block-diagonal structure in a dominant way.
It becomes the principal tensor SVD (\ptsvd) when each diagonal block of $T$ is scalar and
the principal Tucker decomposition when $T$ is considered as one block \Red{[????]}.
We propose an NPDo (nonlinear polar decomposition with orthonormal polar factor dependency)
approach for that purpose. It includes an alternating SCF (self-consistent-field) iteration to
numerically solve the KKT condition equations and a relevant theory.
It is shown the SCF iteration combined with Gauss-Seidel-type updating is globally convergent to a stationary point while the objective increases monotonically.
An accelerated version of the approach via the locally optimal conjugate gradient (LOCG) technique is also established.
Numerical experiments are presented to illustrate the efficiency of the NPDo approach.


\clearpage
{\small
\def\noopsort#1{}\def\l{\char32l}\def\v#1{{\accent20 #1}}
  \let\^^_=\v\def\hbk{hardback}\def\pbk{paperback}

}

\end{document}